\numberwithin{equation}{section}
\newtheorem{theorem}{Theorem}[section]
\newtheorem{lemma}[theorem]{Lemma}
\newtheorem{corollary}[theorem]{Corollary}
\newtheorem{proposition}[theorem]{Proposition}
\theoremstyle{definition}
\newtheorem{definition}[theorem]{Definition}
\newtheorem{remark}[theorem]{Remark}
\newtheorem{example}[theorem]{Example}
\newcounter{minutes}\setcounter{minutes}{\time}
\newcounter{hours}\setcounter{hours}{\time}
\begin{document}

	\title[Orthogonality and duality of frames over LCA groups]
	{Orthogonality and duality of frames over\\ locally compact abelian groups}

\author{Anupam Gumber and Niraj K. Shukla} 

 \thanks{Address: Discipline of Mathematics,
	Indian Institute of Technology Indore, Simrol,   
	Indore-453 552, India. \\
   E-mail: anupamgumber.iiti@gmail.com, o.nirajshukla@gmail.com}

\begin{abstract}	 
	 	Motivated by the recent work of Bownik and Ross \cite{BR}, and Jakobsen and Lemvig \cite{JL}, this article generalizes  latest results on reproducing formulas for generalized translation invariant (GTI) systems to the setting of super-spaces over a second countable locally compact abelian (LCA) group $G$. To do so, we introduce the notion of a super-GTI system with finite sequences as generators from a super-space $L^2(G) \oplus  \cdots \oplus L^2(G) $ ($N$ summands). We characterize the generators of two super-GTI systems in the super-space such that they form a super-dual frame pair. For this, we first give necessary and sufficient conditions for two Bessel families to be orthogonal frames (we call as GTI-orthogonal frame systems) when the Bessel families have the form of GTI systems in $L^2(G)$. As a consequence, we deduce similar results for several function systems including the case of TI systems, and GTI systems on compact abelian groups. As an application, we apply our duality result for super-GTI systems to the Bessel families with a wave-packet structure (combination of wavelet as well as Gabor structure), and hence a characterization for dual super wave-packet systems on LCA groups is obtained. In addition, we relate the well established theory from literature with our results by observing several deductions in context of wavelet and Gabor systems over LCA groups with $G=\mathbb{R}^d,\mathbb{Z}^d$, etc. 
\end{abstract}

\subjclass[2010]{42C15, 43A70, 42C40, 43A32}
\keywords{Generalized translation invariant system, locally compact abelian group, orthogonal frames, super-space, dual frame, wave-packet system,  wavelet system}

\maketitle

 
\section{Introduction}\label{sec 1} 
The concept of frames for super Hilbert spaces (or, simply super-spaces), that is, \lq\lq superframes\rq\rq, was initially introduced and investigated by Balan \cite{Bal} in the context of multiplexing.
Motivated by the wide applications of such frames in multiplexing techniques,  mobile and satellite communication, and computer area network, etc., a lot of mathematicians and engineering specialists have contributed in developing different aspects of frame properties for super-spaces (see \cite{HL,LL,LH, HKLW}). Among these properties, the orthogonality of frames in Hilbert spaces is intimately related with superframes in Hilbert spaces which  plays a key role in synthesizing superframes and frames (see \cite{W,LH,X,Xu,HL, HKLW} and references within). In this scenario, the main focus of this article is to  study orthogonal frames as well as superframes for  Hilbert spaces  associated with locally compact abelian (LCA) groups.

In the last two decades, frame theory on LCA groups has become the focus of an active research, both in theory as well as in applications due to its potential to unify the continuous theory (integral representations) and the discrete theory (series expansions). Several researchers have made remarkable contributions in establishing the theory required to analyse frame properties on such groups  (e.g., see \cite{C,JL,BR,CP, G,KL, GS, W, JL1, FHS,KLS}).

In \cite{W}, Weber studied orthogonal frames of translates in $L^2(\mathbb{R}^d)$ which lead to a characterization of superframes for $L^2(\mathbb{R}^d)$.
In this article, we plan to investigate orthogonal frames which arise from translations of generating functions via a countable family of closed, co-compact subgroups of a second countable LCA group $G$. Along with this, one of our main motive is to see applications of orthogonal frames to construct dual frames for super-spaces over LCA groups. For this, we introduce a notion of super-GTI system with generators from a super-space $L^2(G) \oplus  \cdots \oplus L^2(G)$($N$ summands). 

At this juncture, it is relevant to note that the notion of super-GTI system  generalizes the recent concept of GTI systems introduced by Jakobsen and Lemvig \cite{JL} which provides an approach that unifies the connection between the well established discrete frame theory of generalized shift invariant (GSI) systems and its continuous version. At the same time, Bownik and Ross in \cite{BR} considered the  translation invariant (TI) systems which are families with  translation along a single co-compact subgroup of an LCA group. Since GTI systems generalize TI systems, we introduce a parallel notion of super-TI systems which can be recovered from super-GTI systems.

The motivation behind the consideration of co-compact subgroups  in \cite{BR} and \cite{JL} is related to the necessity of overcoming the limitation on existence of uniform lattices for an LCA group, which says there exist LCA groups that do not contain any uniform lattices, for example, the $p$-adic numbers, whose only discrete subgroup is the neutral element which is not a uniform lattice. Another example is the $p$-adic integers which have only trivial examples of uniform lattices but have a lot of non-trivial co-compact subgroups.  Hence, the concept of co-compact subgroups in \cite{BR} and \cite{JL} respectively generalizes the work on function systems with translation along uniform lattices by Cabrelli and Paternostro \cite{CP} and Kutyniok and Labate \cite{KL}.   

In association with this, note that the work of Kutyniok and Labate \cite{KL} presented a unified theory for many of the known function systems (e.g., Gabor systems and GSI systems on 
$\mathbb{R}^d$) by introducing the notion of GSI systems in the LCA group setting. This approach is an extension of the theory of Hern$\acute{\mbox{a}}$ndez, Labate and Weiss\cite{HLW}, and  Ron and Shen \cite{RS} on GSI systems in $L^2(\mathbb{R}^d)$. 
Thus, the theory of super-GTI systems is more generalized and is applicable to a wide class of LCA groups.

Among these systems, the study of frame properties such as duality of structured function systems (e.g., Gabor, wavelet, and shearlet systems) in different settings has got special attention due to their interesting theory and enormous applications in pure mathematics as well as in engineering areas such as signal processing, image processing etc. \cite{Ba, C, LH,G,J,FKL, FHS}. 

In this scenario, we apply our characterization results on Bessel families with wave-packet, Gabor  and wavelet structure to get necessary and sufficient conditions for duals of wave-packet frames, Gabor frames and  wavelet frames in super-spaces over LCA groups. Note that one of the goals of this article is to continue the study for duals of super wavelet frames and super Gabor frames over LCA groups. For this, we first need to characterize duals of GTI systems in super-spaces  with the help of  GTI-orthogonal frame systems.
We remark that our duality results on super-GTI systems generalize the characterization of dual frames for GTI systems on LCA groups obtained in \cite[Theorem 3.4]{JL}.

Now, for discussing the main content of this article, we first recall some definitions and basic properties about continuous frames for Hilbert spaces. Such frames were introduced independently by Ali et al. \cite{AAG} and Kaiser \cite{Ka}. For a brief and self-sufficient introduction to continuous frames, we refer \cite{GH,RND}.
\begin{definition} \label{de 1.1} 
	Let $\mathcal{H}$ be a complex Hilbert space, and let ($M$, $\sum_{M}$, $\mu_M$) be a measure space, where {$\sum_{M}$ denotes the $\sigma$-algebra and $\mu_M$ the non-negative measure.
	Then,  a family of functions $\{f_m\}_{m \in M}$ in $\mathcal{H}$, is called a {\textit{continuous frame}} for $\mathcal{H}$ with respect to ($M$, $\sum_{M}$, $\mu_{M}$), if
	\begin{itemize}
		\item [(1)] $m \mapsto f_m$ is weakly measurable, that is, for all $h \in \mathcal{H}$, the mapping $M \rightarrow \mathbb{C};\, m \mapsto \langle{h, f_m}\rangle$ is measurable, and
		\item[(2)] there exist constants $0 <\alpha_1 \leq \alpha_2 $ such that
		\begin{equation}\label{eq 1.1}
		\alpha_1||h||^2 \leq \int \limits_{M}|\langle{h, f_m}\rangle|^2d\mu_{M}(m)\leq\alpha_2 ||h||^2, \,\,~\mbox{for all}~\, h \in \mathcal{H}.
		\end{equation}
	\end{itemize}
	
	The constants $\alpha_1$ and $\alpha_2$ are called continuous frame bounds.  A continuous frame $\{f_m\}_{m \in M}$ is called {\textit{tight}} if we can choose $\alpha_1 = \alpha_2$, and {\textit{Parseval}} if $\alpha_1 = \alpha_2=1$. The family $\{f_m\}_{m \in M}$} is called {\textit{Bessel}} with constant $\alpha_2$ as its {\textit{Bessel constant}} if the right side of inequality in (\ref{eq 1.1}) holds. In this case, we say that the family $\{f_m\}_{m \in M}$ satisfies the {\textit{Bessel condition}}.
\end{definition} 
Since this article  deals with only separable Hilbert spaces, we can use Petti's theorem to replace weak measurability of $m \mapsto f_m$ with (strong) measurability with respect to the Borel algebra in $\mathcal{H}$.

If $\mu_M$ is counting measure and $M=\mathbb{N}$, then $\{f_m\}_{m \in M}$ reduces to a discrete frame. In this sense continuous frames can be realized as the generalization of discrete frames. Here onwards, we will simply call continuous frames  as frames by suppressing the term continuous just for the sake of simplicity.

Given the family of functions $\mathbb{F}:=\{f_m\}_{m \in M}$, which is  Bessel  with respect to a measure space $(M, \sum_{M}, \mu_M)$, define the {\textit{synthesis  operator}} $\Theta_{\mathbb{F}}: L^2(M, \mu_M)\rightarrow \mathcal{H}$ by
\begin{align*}
\langle{\Theta_{\mathbb{F}}\varphi, h}\rangle=\int \limits_{M}  \langle{f_m,h}\rangle\varphi_m d\mu_M(m), \,\,h \in \mathcal{H},
\end{align*}
which is a well defined, linear and bounded operator \cite[Theorem 2.6]{RND}. Further, we define the adjoint of the synthesis operator as $\Theta^{\ast}_{\mathbb{F}}: \mathcal{H} \rightarrow L^2(M, \mu_M)$ given by
\begin{align*}
(\Theta^{\ast}_{\mathbb{F}} h)(m) = \langle{h, f_m}\rangle,\,\,
m \in M.
\end{align*}
 We call this operator as the {\textit{analysis operator}} of $\mathbb{F}$.

Given two Bessel families $\{f_m\}_{m \in M}$ and $\mathbb{G}:=\{g_m\}_{m \in M}$ with respect to the measure space $(M, \sum_{M}, \mu_M)$ for $\mathcal{H}$, define the {\textit{mixed dual Gramian operator}} corresponding to $\mathbb{F}$ and $\mathbb{G}$ as
\begin{align*}
\Theta_{\mathbb{G}}\Theta^{\ast}_{\mathbb{F}}: \mathcal{H} \rightarrow \mathcal{H}; \,h \mapsto \int \limits_{M} \langle{h, f_m}\rangle g_m d\mu_M(m).
\end{align*}
Gabardo and Han in \cite{GH} defined a dual frame for a continuous frame as follows:
\begin{definition}
 Let $\mathbb{F}$ and $\mathbb{G}$ be  two Bessel families with respect to the measure space $(M, \sum_{M}, \mu_M)$ for $\mathcal{H}$. We call
$\mathbb{G}$ a {\textit{dual frame}} for $\mathbb{F}$ if the following holds true:
\begin{equation} \label{eq 1.2}
	\langle{h_1, h_2}\rangle =
\int \limits_{M} \langle{h_1, f_m}\rangle \langle{g_m,h_2}\rangle d\mu_{M}(m), \,\,~\mbox{for all}~\, h_1, h_2 \in \mathcal{H}.
\end{equation}
	In this case, $\mathbb{F}$ and  $\mathbb{G}$ are actually (continuous) frames, and hence $(\mathbb{F}, \mathbb{G})$ is called a {\textit{dual frame pair}}. If $\Theta_{\mathbb{F}}$ and $\Theta_{\mathbb{G}}$ denote the synthesis operators of
	$\mathbb{F}$ and $\mathbb{G}$, respectively, then $(\ref{eq 1.2})$ is equivalent to $\Theta_{\mathbb{G}}\Theta^{\ast}_{\mathbb{F}} = I_{\mathcal{H}}$, that is, an identity operator on $\mathcal{H}$. In this case, we say that the following relation
	\begin{equation*}
	\label{eq 1.31}
h  =
	\int \limits_{M} \langle{h, f_m}\rangle g_m d\mu_{M}(m),\,~\mbox{for all}~ f \in \mathcal{H},
	\end{equation*}
holds in the weak sense.  This relation is generally known as a {\textit{reproducing formula}}  for $f \in \mathcal{H}$.	
\end{definition}	

\begin{definition}\label{de 1.4}
Suppose  $\mathbb{F}$ and $\mathbb{G}$ are Bessel families with respect to $(M, \sum_{M}, \mu_M)$ for $\mathcal{H}$. If
\begin{align*}
 \Theta_{\mathbb{G}}\Theta^{\ast}_{\mathbb{F}}:= \int \limits_{M} \langle{\cdot, f_m}\rangle g_m d\mu_M(m)=0,
\end{align*}
that is, the mixed dual Gramian operator corresponding to $\mathbb{F}$ and $\mathbb{G}$ is $0$, then the Bessel families are said to be {\textit{orthogonal}}. 
\end{definition}

The remainder of this article is organized as follows: In Section 2, we state some basic preliminaries, notation and definitions on LCA groups. We introduce the notion of super-GTI systems on LCA groups in Section 3. Along with this, we provide the statements of the main results of this article, and deduce similar results for several function systems including the case of TI systems, GSI systems and GTI systems on compact abelian groups. Section 4 forms the proof of our first main result which gives a characterization
of GTI-orthogonal frame systems in $L^2(G)$. In Section 5, we establish necessary and
sufficient conditions for the generators  of two GTI systems in the super-space over LCA groups such that they form a dual frame pair. And lastly,  we discuss  applications of our characterization results on the Bessel families with wave-packet, Gabor  and wavelet structure on LCA groups in the last section.

 
\section{Fourier analysis on locally compact abelian groups}
\label{sec 2}

In this section, we review some basic results from Fourier analysis on locally compact abelian (LCA) groups. In this way, we set up the notation used for the remainder of this article.

Here and throughout, let $G$ denote a second countable locally compact abelian (LCA) group, with the additive group composition, denoted by the symbol \lq\lq+\rq\rq\, and neutral element $0$. Note that the second countable property of $G$ is equivalent in saying that $G$ is metrizable and $\sigma$-compact. It is well known that on every LCA group $G$, there exists a {\textit{Haar measure}}, that is, a non-negative, regular Borel measure denoted as $\mu_G$ (not identically zero) which is translation invariant, $i.e.$, $\mu_G(E+x)=\mu_G(E)$ for every element $x \in G $ and every Borel set $E \subseteq G$. This measure on any LCA group is unique up to a positive constant.

Denote by $\widehat{G}$, the set of all continuous characters, that is, all continuous homomorphisms from $G$ into the torus $\mathbb{T}\cong \{z \in \mathbb{C}: |z|=1\}$. Then, under the pointwise multiplication $\widehat{G}$ forms an LCA group with unit element $1$, we call as the {\textit{dual group}} associated to $G$, when equipped with the compact convergence topology and the composition
$
(\gamma + \gamma')(x):= \gamma(x)\gamma'(x),\,\, \gamma,\,\gamma' \in \widehat{G},\,\, x \in G$,
and thus possesses a Haar measure with notation given by $\mu_{\widehat{G}}$.
It turns out that there exists a topological group isomorphism mapping the group $\widehat{\widehat{G}}$, that is, the dual group of $\widehat{G}$, onto $G$. More precisely, $\widehat{\widehat{G}}\cong G$ \cite[Pontryagin duality theorem]{F}. Note that if an LCA group $G$ is discrete then $\widehat{G}$ is compact, and vice versa.

Given an LCA group $G$ with Haar measure $\mu_G$, the integral over $G$ is translation invariant in the sense that,
\begin{align*}
\int \limits_{G} f(x+y)d\mu_{G}(x)= \int \limits_{G} f(x)d\mu_{G}(x)
\end{align*}
for each element $y\in G$ and for each Borel-measurable function $f$ on $G$. For $1\leq p< \infty$,  we define the space $L^p(G, \mu_G)$ (or, simply  $L^p(G)$) as follows:
 \begin{align*}
 L^p(G):= \Big\{f: G \rightarrow \mathbb{C} \,~\mbox{is a measurable function and}~\, \int \limits_{G} |f(x)|^p d\mu_{G}(x) < \infty  \Big\}.
 \end{align*}
 Since $G$ is a second countable LCA group, therefore, $L^p(G)$ is separable, for all $1\leq p< \infty$. In this article, we will focus only on $p=2$ case. Here, note that $L^2(G)$ is a Hilbert space with inner product given by
 \begin{align*}
 \langle{f,g}\rangle=\int \limits_{G}f(x)\overline{g(x)}d\mu_{G}(x), \,\,~\mbox{for all}~\,  f, g \in L^2(G).
 \end{align*}
 Let the Fourier transform $\,\, \widehat{}\,\,: L^{1}(G) \rightarrow C_0(\widehat{G}), f \mapsto \widehat{f}$, be defined by the operator
 \begin{align*}
\mathcal{F}f(\xi)=\widehat{f}(\xi)=\int \limits_{G}  f(x)\overline{\xi(x)} d\mu_{G}(x),\,\,\, \xi \in \widehat{G},
 \end{align*}
 where $C_0(\widehat{G})$ denotes the functions on $\widehat{G}$ vanishing at infinity. If $f\in L^1(G)$, $\widehat{f} \in \widehat{G}$, and the measures on $G$ and $\widehat{G}$ are normalized appropriately so that the Plancherel theorem holds, then the inverse Fourier transform can be defined by
 \begin{align*}
 f(x)=\mathcal{F}^{-1}\widehat{f}(x)=\int \limits_{\widehat{G}} \widehat{f}(\xi)\xi(x)d\mu_{\widehat{G}}(\xi),\,\, x \in G,
 \end{align*}
 and the Fourier transform $\mathcal{F}$ can be extended from $L^1(G) \cap L^2(G)$ to a surjective isometry  between $L^2(G)$ and $L^2(\widehat{G})$ \cite[Plancherel theorem]{F}. Thus,  the Parseval formula holds and is given by
 \begin{align*}
 \langle{f,g}\rangle=\int \limits_{G}f(x)\overline{g(x)}d\mu_{G}(x)=\int \limits_{\widehat{G}}\widehat{f}(\xi)\overline{\widehat{g}(\xi)}d\mu_{\widehat{{G}}}=\langle{\widehat{f}, \widehat{g}}\rangle, \,\,~\mbox{for all }~\,  f, g \in L^2(G).
 \end{align*}
 Let $\Gamma \subseteq G$ be a closed subgroup of an LCA group $G$. Then, the quotient $G/\Gamma$ is a regular topological group. Further, we note that it is a second countable LCA group under the quotient topology by using the fact that $G$ is second countable.
 
 For a subgroup $\Gamma$ of an LCA group $G$, the annihilator $\Gamma^{\perp}$ of $\Gamma$ is defined by
 \begin{align*}
 \Gamma^{\perp}:=\{\xi \in \widehat{G}:\xi(x)=1,\,\,\forall~\, x \in \Gamma\}.
 \end{align*}
 It follows from the definition of the topology on $\widehat{G}$ that the annihilator $\Gamma^{\perp}$ is a closed subgroup in $\widehat{G}$, and if $\Gamma$ is closed, then $(\Gamma^{\perp})^{\perp}= \Gamma$ and the following hold:
 \begin{itemize}
 	\item [(1)] There exists a topological group isomorphism mapping $\widehat{G/\Gamma}$ onto $\Gamma^{\perp}$, that is,
 	$\widehat{G/\Gamma} \cong \Gamma^{\perp}$;
 	\item [(2)] There exists a topological group isomorphism mapping $\widehat{\widehat{G}/\Gamma^{\perp}}$ onto $\Gamma$, that is, $\widehat{\widehat{G}/\Gamma^{\perp}} \cong \Gamma$.
 \end{itemize}

 The following definition will be used in this sequel:

 \begin{definition}
 	Given $G$ an LCA group, a subgroup $\Gamma$ in $G$ is said to be
 	\begin{itemize} \item[(i)] {\textit{co-compact}}  if the quotient group $G/\Gamma$ is compact.
 		\item[(ii)] a {\textit{uniform lattice}} if $\Gamma$ is discrete and quotient group $G/\Gamma$ is compact.
 		\end{itemize}
 \end{definition}
    For more information on harmonic analysis on locally compact abelian groups, we refer the reader to the classical books \cite{F, HR, HRo}.
    
\section{Notion of 
	super-generalized translation invariant systems}\label{sec 3}
We begin by considering generalized translation invariant (GTI) systems introduced by Jakobsen and Lemvig in \cite{JL}. Such systems model various discrete and continuous systems, e.g., the wavelet, shearlet and Gabor systems, etc. We refer \cite[Section 2.2]{JL}, for the following definition of the GTI system: 

\begin{definition}\label{de 2.1}
	Let $J \subset \mathbb{Z}$ be a countable index set. For each $j \in J$, let $P_j$ be a countable or an uncountable index set, let $g_{j,p} \in L^2(G)$ for $p \in P_j$, and let $\Gamma_{j}$ be a closed, co-compact  subgroup in $G$. Then, the {\textit{generalized translation invariant $($GTI$)$ system}} generated by $\{g_{j,p}\}_{p \in P_j, \,j \in J}$ with translation along closed, co-compact subgroups
	$\{\Gamma_j\}_{j \in J}$ is the family of functions given by
	$ \bigcup\limits_{j \in J}\{T_{\gamma}g_{j,p}\}_{\gamma \in \Gamma_j,\, p \in P_j},
	$ where for $y \in G$, the operator $T_{y}$, called the {\textit{translation}} by $y$, is defined by 
	\begin{align*}
	T_y: L^2(G)\rightarrow L^2(G),\, (T_yf)(x)=f(x-y),\,\, x \in G.
	\end{align*}
\end{definition}

 Now, we wish to generalize the above definition of 
GTI system to the case of super-space given by $L^2(G)\oplus \cdots \oplus L^2(G)$, that is,  the orthogonal direct sum of $L^2(G)$ with multiplicity $N$ (a natural number). Denote this space by $L^2(G)^{(N)}$, where 
\begin{align*}
L^2(G)^{(N)}:= \Big\{\bigoplus \limits_{n=1}^{N}f^{(n)} :=(f^{(1)},f^{(2)},\ldots,f^{(N)}) : f^{(n)} \in L^2(G); 1 \leq n \leq N \Big\},
\end{align*}
is a Hilbert space, we call as a {\textit{super-space}} endowed with the inner product 
\begin{align*}
\big\langle{{\bf{f}},\tilde{{\bf{f}}}}\big\rangle:= \sum\limits_{n=1}^{N} \big\langle{f^{(n)}, {\tilde{f}}^{(n)}}\big\rangle,
\end{align*}
for all ${\bf{f}}=(f^{(1)},f^{(2)},\ldots,f^{(N)}),
\tilde{{\bf{f}}}=(\tilde{f}^{(1)},\tilde{f}^{(2)},\ldots,\tilde{f}^{(N)}) \in L^2(G)^{(N)}$. In what follows, for an arbitrary ${\bf{f}} \in L^2(G)^{(N)} $, we always denote by $f^{(n)}$ its $n^{th}$ component for each $1 \leq n \leq N$.

\subsection{Definition of Super-Generalized Translation Invariant Systems} \label{sec 3.1} 

\noindent
\medskip

For each $1 \leq n \leq N$, let  $\{g^{(n)}_{j,p}\}_{p \in P_j,\,j \in J}\subset L^2(G)$, where $J$ and $P_j$  are as described in Definition~\ref{de 2.1}. Then, 
the {\textit{super-generalized translation invariant system $($super-GTI system$)$}}
generated by a collection of finite sequences  \begin{equation*}\Big\{\bigoplus \limits_{n=1}^{N}g^{(n)}_{j,p}\Big\}_{p \in P_j,\,j \in J}:=\big\{(g^{(1)}_{j,p},g^{(2)}_{j,p}, \dots, g^{(N)}_{j,p}) \big\}_{p \in P_j,\,j \in J} \subset L^2(G)^{(N)},\end{equation*} is the family of functions defined by
\begin{equation}\label{eq 2.21}
	\bigcup\limits_{j \in J}\Big\{\bigoplus \limits_{n=1}^{N} T_{\gamma}g^{(n)}_{j,p}
	\Big\}_{\gamma \in \Gamma_j,\, p \in P_j}:=	\bigcup\limits_{j \in J}\big\{T_{\gamma}g^{(1)}_{j,p}
	\oplus  \cdots \oplus T_{\gamma}g^{(N)}_{j,p}\big\}_{\gamma \in \Gamma_j,\, p \in P_j},
\end{equation}
where for each $j \in J$,  $\Gamma_j$ is a closed, co-compact subgroup in $G$.
In particular, if all $\Gamma_j$ coincide in  (\ref{eq 2.21}), that is, if $\Gamma_j=\Gamma$ (say) for each $j \in J$, then we call (\ref{eq 2.21}) as the {\textit{super-translation invariant system $($super-TI system$)$}}	in view of the fact that  ${\bf{f}}=f^{(1)}\oplus \cdots \oplus f^{(N)}$ in $\bigcup\limits_{j \in J}\Big\{\bigoplus \limits_{n=1}^{N} T_{\gamma}g^{(n)}_{j,p}
\Big\}_{\gamma \in \Gamma,\, p \in P_j}$ implies  $T_{\gamma}{\bf{f}}=T_{\gamma}f^{(1)}\oplus \cdots \oplus T_{\gamma}f^{(N)}$ is a member in  $\bigcup\limits_{j \in J}\Big\{\bigoplus \limits_{n=1}^{N} T_{\gamma}g^{(n)}_{j,p}
\Big\}_{\gamma \in \Gamma,\, p \in P_j}$ for all $\gamma \in \Gamma$.
Further, in case each $P_j$ is countable and each $\Gamma_j$ is a uniform lattice, we term the family of functions in (\ref{eq 2.21}) as the {\textit{super-generalized shift invariant system $($super-GSI system$)$}}.

\begin{remark}  We  mention that the notion of super-GTI systems, super-TI systems and super-GSI systems has not been considered before as per our knowledge, and is appearing first time in literature through this article.
It is relevant to note that the notion is more general in the sense that when $N=1$, the above mentioned systems respectively  generalize the definitions of  already existing systems such as \textit{GTI systems} \cite{JL}, \textit{TI systems} \cite{BR} and \textit{GSI systems} \cite{RS} to the case of super-spaces over LCA groups. 
\end{remark}

\subsection{Super-Generalized Translation Invariant Frame Systems}\label{sec 3.2}

\noindent
\medskip

In order to study frame properties for super-GTI systems introduced in Subsection~\ref{sec 3.1},  we need to view the family of functions (\ref{eq 2.21}) in the set-up of continuous $\mathrm{g}$-frames. Recall that these frames are a generalized version of continuous frames, more precisely,   
for a countable index
set $J \subset \mathbb{Z}$, a family of functions $\bigcup_{j \in J}\{f_{j,m}\}_{m \in M_j}$ is a {\textit{continuous generalized frame $($continuous $\mathrm{g}$-frame$)$}} for a complex Hilbert space $\mathcal{H}$ with respect to a collection of measure spaces
$\{(M_j,\sum_{M_j},\mu_j): j \in J\}$,   if
\begin{itemize}
	\item [($\mathcal{C}_1$)] $m\mapsto f_{j,m},\, M_j \rightarrow \mathcal H$ is  measurable for each $j \in J$, and
	\item[($\mathcal{C}_2$)] there exists constants $0 <\alpha_1 \leq \alpha_2 $ such that
	\begin{equation*}\label{eq 1.3}
		\alpha_1||h||^2 \leq \sum \limits_{j \in J}\int \limits_{M_j}|\langle{h, f_{j,m}}\rangle|^2d\mu_{M_j}(m)\leq\alpha_2 ||h||^2, \,\,~\mbox{for all}~\, h \in \mathcal{H}.
	\end{equation*}
\end{itemize}

Note that all the definitions and operators associated to Definition~\ref{de 1.1} can be easily visualized for the case of continuous $\mathrm{g}$-frames. For more details, we refer \cite{S, Xu} and various references within.

Our next motive is to compare the super-GTI system defined in (\ref{eq 2.21}), that is,  $\bigcup\limits_{j \in J}\Big\{\bigoplus \limits_{n=1}^{N} T_{\gamma}g^{(n)}_{j,p}
\Big\}_{\gamma \in \Gamma_j,\, p \in P_j}$  with the family of functions $\bigcup_{j \in J}\{f_{j,m}\}_{m \in M_j}$ considered in the above definition of continuous 
$\mathrm{g}$-frame. Before proceeding, we introduce some notions and notation. Let $(P_j, \sum_{P_j}, \mu_{P_j})$ and $(\Gamma_j, B_{P_j}, \mu_{
	\Gamma{_j}})$ be measure spaces for each $j \in J$, where $J \subset \mathbb{Z}$ is a countable index set and for a topological space $X$, by $B_{X}$, we denote the Borel algebra of $X$. Then, for each $j \in J$, we denote by:
\begin{itemize}
	\item [(I)]  $(\prod \limits_{N}{P}_{j})\times \Gamma_j:=({P}_j \times \cdots \times {P}_j)\times \Gamma_j $, the product measure  space formed by the Cartesian product of $\Gamma_j$ with the measure space ${P}_j \times \cdots \times {P}_j=:\prod \limits_{N}{P}_{j} $,
	\item [(II)]   $(\bigotimes \limits_{N}{\sum_{P_j}})\otimes B_{\Gamma_j}:=
	(\sum_{P_j}\otimes \cdots \otimes \sum_{P_j})\otimes B_{\Gamma_j}$, the tensor-product $\sigma$-algebra on $(\prod \limits_{N}{P}_{j})\times \Gamma_j$, formed by the tensor-product of $B_{\Gamma_j}$ with the $\sigma$-algebra
	$\sum_{P_j}\otimes \cdots \otimes \sum_{P_j} =:\bigotimes \limits_{N}\sum_{P_j}$ on 
	$\prod \limits_{N}{P}_{j}$,
	\item [(III)]  $(\mu_{\prod \limits _{N}{P}_j})\otimes \mu_{\Gamma_j}:= (\mu_{{P}_j}\otimes \cdots \otimes \mu_{{P}_j})\otimes \mu_{\Gamma_j}$, the product measure on $(\prod \limits_{N}{P}_{j})\times \Gamma_j$, formed by the tensor-product of $\mu_{\Gamma_j}$ with the  measure $\mu_{P_j}\otimes \cdots \otimes \mu_{P_j}=:\mu_{\prod \limits _{N}{P}_j}$ on $\prod \limits_{N}{P}_{j}$,
	\item [(IV)]  $\mathcal{D}_j \times \Gamma_j:=\{({\bf{p}}, \gamma): {\bf{p}}:= (p,\ldots,p) \in 
	\mathcal{D}_j, \gamma \in \Gamma_j
	\}$, a subset in $(\prod \limits_{N}{P}_{j})\times \Gamma_j$, where the notation $\mathcal{D}_j:=\{(p,\ldots,p): p \in P_j\}$ is a subset in $\prod \limits_{N}{P}_{j} $  with the subspace $\sigma$-algebra and the subspace measure defined respectively by setting $\sum_{\mathcal{D}_j}:=\{\mathcal{D}_j \cap Y: Y \in \bigotimes \limits_{N}{\sum_{P_j}}\}$ and $\mu_{\mathcal{D}_j}:=\mu^{\ast}|_{\sum_{\mathcal{D}_j}}$, that is, the function with domain $\sum_{\mathcal{D}_j}$ such that $\mu_{\mathcal{D}_j}(B_j)=\mu^{\ast}(B_j)$ for every $B_j \in \sum_{\mathcal{D}_j}$, where  $\mu^{\ast}$  defined for any $\mathcal{S}\subseteq \prod \limits_{N}{P}_{j} $ by $
	\mu^{\ast}(\mathcal{S}):=\inf\big\{\mu_{\prod \limits _{N}{P}_j}(E): E \in \bigotimes \limits_{N}{\sum_{P_j}}, \mathcal{S} \subseteq E\big\}
	$
	denotes an outer measure on  $\prod \limits_{N}{P}_{j}$, provided the infimum (inf) exists. In this case, we call $(\mathcal{D}_j, \sum_{\mathcal{D}_j}, \mu_{\mathcal{D}_j})$ as a subspace of the product measure space $\big(\prod \limits_{N}{P}_{j},\, \bigotimes \limits_{N}\sum_{P_j},\, \mu_{\prod \limits _{N}{P}_j}\big)$.	
	\item [(V)]  $\sum_{\mathcal{D}_j}\otimes B_{\Gamma_j}$ and $\mu_{\mathcal{D}_j}\otimes \mu_{\Gamma_j}$,  the notation for the subspace $\sigma$-algebra and the subspace measure on $\mathcal{D}_j \times \Gamma_j$, where the $\sigma$-algebra and the measure can be defined by using the technique described in (IV).
	In this case, we say $(\mathcal{D}_j \times \Gamma_j,\sum_{\mathcal{D}_j}\otimes B_{\Gamma_j}, \mu_{\mathcal{D}_j}\otimes \mu_{\Gamma_j})$ is a subspace of the product measure space $\Big(\big(\prod \limits_{N}{P}_{j}\big)\times \Gamma_j,\, \big(\bigotimes \limits_{N}\sum_{P_j}\big)\otimes B_{\Gamma_j},\, \big(\mu_{\prod \limits _{N}{P}_j}\big) \otimes \mu_{\Gamma_j}\Big).$
\end{itemize}

From the above discussion, it follows that we can view the super-GTI system defined in (\ref{eq 2.21})(when compared to the set-up of a continuous $\mathrm{g}$-frame) as a family of functions in  $L^2(G)^{(N)}$ with respect to the collection of measure spaces 
$\{(M_j,\sum_{M_j},\mu_j):=\big(\mathcal{D}_j \times \Gamma_j,\sum_{\mathcal{D}_j}\otimes B_{\Gamma_j}, \mu_{\mathcal{D}_j}\otimes \mu_{\Gamma_j}\big): j \in J\}$.

\medskip
\noindent
{{\bf{Standing Hypotheses}}}: To investigate frame properties for super-GTI systems considered in (\ref{eq 2.21}), we assume that these systems satisfy the following criterion for the rest of this article. For each $j \in J$:
\begin{itemize}
	\item [(1)]
	$($$\mathcal{D}_{j},\, {\sum _{\mathcal{D}_j}},\, \mu_{\mathcal{D}_j} $$)$ is a $\sigma$-finite measure space,
	\item [(2)]  the mapping ${\bf{p}} \mapsto \bigoplus \limits_{n=1}^{N}g^{(n)}_{j,p}$, $($$\mathcal{D}_{j},\,{\sum_{\mathcal{D}_j}} $$)$ $\rightarrow$ $($$L^2(G)^{(N)},\, B_{L^2(G)^{(N)}}$$)$ is measurable,
	\item [(3)] the mapping 
	$($${\bf{p}},x$$)$ $\mapsto \bigoplus \limits_{n=1}^{N}g^{(n)}_{j,p}$($x$), that is, 
	$(\mathcal{D}_{j} \times G,\,{\sum _{\mathcal{D}_j}} \otimes B_G) \rightarrow (\mathbb{C}^N, B_{\mathbb{C}^{N}})
	$
	is measurable.
\end{itemize}
\begin{remark} Clearly, if we consider $N=1$, then the measure spaces $\big(\mathcal{D}_j \times \Gamma_j,\sum_{\mathcal{D}_j}\otimes B_{\Gamma_j}, \mu_{\mathcal{D}_j}\otimes \mu_{\Gamma_j}\big)$ and $(\mathcal{D}_j, \sum_{\mathcal{D}_j}, \mu_{\mathcal{D}_j})$ respectively coincide with $\big({P}_{j}\times \Gamma_j,\, \sum_{P_j}\otimes B_{\Gamma_j},\, \mu_{{P}_j} \otimes \mu_{\Gamma_j}\big)$ and $({P}_{j}, \sum_{P_j}, \mu_{{P}_j})$  for each $j$, and hence the super-GTI system introduced in (\ref{eq 2.21}) represents the generalized form of the system considered in the Definition~\ref{de 2.1}.
\end{remark}

\noindent
{\textit{Super-GTI System as a Continuous $\mathrm{g}$}}-frame: For this, we first verify the condition ($\mathcal{C}_1$). Let $j \in J$. Consider a function $F: \mathcal{D}_j\times \Gamma_j \rightarrow L^2(G)^{(N)};\,\,  ({\bf{p}},\gamma) \mapsto \bigoplus \limits_{n=1}^{N} T_{\gamma}g^{(n)}_{j,p}$. The function $F$ is continuous in $\gamma$ and measurable in ${\bf{p}}$, and hence represents a Carath\'eodory function 
$\widetilde{F}$ which is
defined on
$\mathcal{D}_j$
by
$\widetilde{F}({\bf{p}})(\gamma)=F({\bf{p}}, \gamma)$. Since $\Gamma_j \subset G$ is second countable and  locally compact, and $L^2(G)^{(N)}$ is separable, it follows that $\widetilde{F}$, and hence the function $F$  is jointly measurable on $(M_j, \sum_{M_j})=\big(\mathcal{D}_j \times \Gamma_j,\sum_{\mathcal{D}_j}\otimes B_{\Gamma_j}\big)$. Thus, the condition ($\mathcal{C}_1$) holds, and the super-GTI system (\ref{eq 2.21}) is automatically weakly measurable. In addition, if the super-GTI system  satisfies the condition ($\mathcal{C}_2$) with respect to the measure spaces $\big(\mathcal{D}_j \times \Gamma_j,\sum_{\mathcal{D}_j}\otimes B_{\Gamma_j}, \mu_{\mathcal{D}_j}\otimes \mu_{\Gamma_j}\big)$, we call (\ref{eq 2.21}) as the \textit{super-generalized translation frame system $($super-GTI frame system$)$} for $L^2(G)^N$. Similar conclusions can be drawn for the case of super-GTI systems being Bessel families, Parseval frames, etc.

\medskip
\noindent
{{\bf{Local Integrability Conditions:}}}
We mention that for stating our main characterization results in Subsection~\ref{sec 3.2}, we require some technical definition in the form of a local integrability condition. For the case of GSI systems, such condition was originally introduced by   Hern$\acute{\mbox{a}}$ndez, Labate and Weiss in \cite{HLW} for $L^2(\mathbb{R}^n)$, and later generalized by  Kutyniok and Labate in \cite{KL} for $L^2(G)$. This condition was further proposed in a more generalized form by Jakobsen and Lemvig in \cite{JL} for GTI systems in $L^2(G)$. We state these conditions as follows:
	 \begin{definition}\label{de 2.2}
	 	Consider two GTI systems $\bigcup\limits_{j \in J}\{T_{\gamma}g_{j,p}\}_{\gamma \in \Gamma_j, p \in P_j}$ and 
	 	$\bigcup\limits_{j \in J}\{T_{\gamma}h_{j,p}\}_{\gamma \in \Gamma_j, p \in P_j}$ in $L^2(G)$. 
	 	\begin{itemize}
	 		\item [(i)] We say that 
	 		$\bigcup\limits_{j \in J}\{T_{\gamma}g_{j,p}\}_{\gamma \in \Gamma_j, p \in P_j}$ satisfies the \textit{local integrability condition $($LIC$)$} if \begin{equation}\label{eq 3.211}
	 		\sum \limits_{j \in J}\int\limits_{P_j}\sum\limits_{\alpha \in \Gamma^{\perp}_{j}}\int\limits_{supp\,\widehat{f}}|\widehat{f}(\xi+\alpha)\widehat{g}_{j,p}(\xi)|^2d\mu_{\widehat{G}}(\xi)d\mu_{P_j}(p)<\infty,\,~\mbox{for all}~ f \in \mathfrak{D},
	 		\end{equation}
	 		where for a Borel set $B$ in $\widehat{G}$ with $\mu_{\widehat{G}}(\overline{B})=0$, we define  the subset $\mathfrak{D}$ in $L^2(G)$ as follows:
	 		\begin{align*}
	 		\mathfrak{D}:= \{f \in L^2(G): \widehat{f}\in L^{\infty}(\widehat{G})\, ~\mbox{and supp} \widehat{f} ~\mbox{is compact in}~ \widehat{G}\setminus B \}.
	 		\end{align*}
	 		\item [(ii)] $\bigcup\limits_{j \in J}\{T_{\gamma}g_{j,p}\}_{\gamma \in \Gamma_j, p \in P_j}$ and 
	 		$\bigcup\limits_{j \in J}\{T_{\gamma}h_{j,p}\}_{\gamma \in \Gamma_j, p \in P_j}$ satisfy
	 		the {\textit{dual $\alpha$-local integrability condition $($dual $\alpha$-LIC$)$}} if
	 		\begin{equation}\label{eq 3.1111}
	 		\sum \limits_{j \in J}\int\limits_{P_j}\sum\limits_{\alpha \in \Gamma^{\perp}_{j}}\int\limits_{\widehat{G}}|\widehat{f}(\xi)\widehat{f}(\xi+\alpha)\widehat{g}_{j,p}(\xi)\widehat{h}_{j,p}(\xi +\alpha)|d\mu_{\widehat{G}}(\xi)d\mu_{P_j}(p)<\infty,\,~\mbox{for all}~f \in \mathfrak{D}.
	 		\end{equation}
	 		In case $g_{j,p}=h_{j,p}$ for each $j$ and $p$, we refer to (\ref{eq 3.1111}) as the {\textit{$\alpha$-local integrability condition $($$\alpha$-LIC$)$}} for the GTI system $\bigcup\limits_{j \in J}\{T_{\gamma}g_{j,p}\}_{\gamma \in \Gamma_j, p \in P_j}$.
	 		Note that the integrands in (\ref{eq 3.211}) and (\ref{eq 3.1111}) are measurable on $P_j \times \widehat{G}$, therefore, we are allowed to reorder sums and integrals in the local integrability conditions. 	
	 	\end{itemize}

	 	\begin{remark} \label{re 3.3}
	 		In view of \cite [Lemma 3.9]{JL}, it is clear that 
	 		\begin{itemize}
	 			\item[(i)] LIC implies the $\alpha$-LIC while the converse need not be true (e.g., see \cite[Example 1]{JL}). 
	 			\item[(ii)] If two GTI systems satisfy the LIC, then they satisfy the dual $\alpha$-LIC.
	 		\end{itemize}
	 	\end{remark}
	 	
	 \end{definition}
 Note that the subset $\mathfrak{D}$ defined above is dense in $L^2(G)$, and since it is sufficient to prove the various frame properties on the dense subset of a Hilbert space, we may verify our results for $\mathfrak{D}$ and then extend on $L^2(G)$ by a density argument.	 
\subsection{Main Results} \label{sec 3.3} 

\noindent
\medskip

In this subsection, we state our main results discussed in the article. The first one is Theorem~\ref{thm 2.1} which provides a characterization of GTI-orthogonal frame systems on LCA groups (proof shall be discussed in Section 4).  
It is known that orthogonal frames for a Hilbert space play a key role in characterizing Parseval frame and dual frames for super-spaces \cite{HL,LH,W,X,Xu}. In our setting, we define such frames as GTI systems satisfying a special case of Definition~\ref{de 1.4} as follows:
	\begin{definition}\label{de 2.4}
		Let  $\bigcup\limits_{j \in J}\{T_{\gamma}g_{j,p}\}_{\gamma \in \Gamma_j,\, p \in P_j}$ and $\bigcup\limits_{j \in J}\{T_{\gamma}h_{j,p}\}_{\gamma \in \Gamma_j,\, p \in P_j}$ be Bessel families (frames for $L^2(G)$). Then, we term these systems as {\textit{GTI-orthogonal Bessel systems $($GTI-orthogonal frame systems$)$}} in $L^2(G)$ if they are orthogonal. In particular, by replacing GTI systems with TI systems and GSI systems, this definition corresponds to  {\textit{TI-orthogonal Bessel systems $($TI-orthogonal frame systems$)$}} and {\textit{GSI-orthogonal Bessel systems $($GSI-orthogonal frame systems$)$}}, respectively.
	\end{definition}
	
Next, we provide the statement of our first main result. To the best of our knowledge, we realized that the characterization results for orthogonal frames have not been studied earlier  in the context of LCA groups. Moreover, for the set-up of LCA groups, orthogonal frames in terms of GTI systems, TI systems and GSI systems are appearing first time in the literature via this article.
\begin{theorem} \label{thm 2.1}
Let  $\bigcup\limits_{j \in J}\{T_{\gamma}g_{j,p}\}_{\gamma \in \Gamma_j,\, p \in P_j}$ and $\bigcup\limits_{j \in J}\{T_{\gamma}h_{j,p}\}_{\gamma \in \Gamma_j,\, p \in P_j}$ be Bessel families $($frames for $L^2(G)$$)$ which satisfy the dual $\alpha$-LIC. Then, the following assertions are equivalent: 
	\begin{itemize}
		\item[(i)] $\bigcup\limits_{j \in J}\{T_{\gamma}g_{j,p}\}_{\gamma \in \Gamma_j,\, p \in P_j}$ and $\bigcup\limits_{j \in J}\{T_{\gamma}h_{j,p}\}_{\gamma \in \Gamma_j,\, p \in P_j}$ are GTI-orthogonal Bessel systems $($GTI-orthogonal frame systems$)$ in $L^2(G)$,
		\item[(ii)] for each $\alpha \in \bigcup\limits_{j \in J}\Gamma_j^{\perp}\setminus \{0\}$, we have 
		\begin{equation}\label{eq 2.1}
		\sum\limits_{j \in J: \alpha \in \Gamma_j^{\perp} } \int \limits_{P_j} \overline{\widehat {h}_{j,p}(\xi)} \widehat{g}_{j,p}(\xi+  \alpha)d{\mu}_{P_j}(p)=0,\, ~\mbox{for a.e.}~\, \xi \in \widehat{G},
		\end{equation}
		and 
		\begin{equation}\label{eq 2.2}
		\sum\limits_{j \in J} \int \limits_{P_j} \overline{\widehat {h}_{j,p}(\xi)} \widehat{g}_{j,p}(\xi)d{\mu}_{P_j}(p)=0,\, ~\mbox{for a.e.}~\, \xi \in \widehat{G}.
		\end{equation}
	\end{itemize}
	
\end{theorem}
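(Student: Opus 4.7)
The plan is to reduce orthogonality $\Theta_{\mathbb{G}}\Theta^{\ast}_{\mathbb{F}}=0$ to a Fourier-side identity, and then read off conditions (\ref{eq 2.1}) and (\ref{eq 2.2}) by isolating individual $\alpha\in\bigcup_{j}\Gamma_j^{\perp}$. By sesquilinearity and density of $\mathfrak{D}$ in $L^2(G)$, the mixed dual Gramian vanishes iff $\langle \Theta_{\mathbb{G}}\Theta^{\ast}_{\mathbb{F}} f_1, f_2\rangle=0$ for all $f_1,f_2\in\mathfrak{D}$. Using the Bessel property and a Fubini argument, I expand
\[
\langle \Theta_{\mathbb{G}}\Theta^{\ast}_{\mathbb{F}} f_1, f_2\rangle
=\sum_{j\in J}\int_{P_j}\!\int_{\Gamma_j}\langle f_1,T_{\gamma}g_{j,p}\rangle\,\overline{\langle f_2,T_{\gamma}h_{j,p}\rangle}\,d\mu_{\Gamma_j}(\gamma)\,d\mu_{P_j}(p).
\]
By Plancherel, each inner product becomes an inverse Fourier transform on the quotient $\widehat{G}/\Gamma_j^{\perp}\cong\widehat{\Gamma_j}$ of the $\Gamma_j^{\perp}$-periodization of $\widehat{f_1}\,\overline{\widehat{g}_{j,p}}$; an analogous formula applies to the second factor.

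Next, I apply the standard ``unfolding'' identity (Plancherel on $\Gamma_j$ combined with the fibration of $\widehat{G}$ over $\widehat{G}/\Gamma_j^{\perp}$) to obtain, for each fixed $j,p$,
\[
\int_{\Gamma_j}\langle f_1,T_{\gamma}g_{j,p}\rangle\,\overline{\langle f_2,T_{\gamma}h_{j,p}\rangle}\,d\mu_{\Gamma_j}(\gamma)
=\sum_{\alpha\in\Gamma_j^{\perp}}\int_{\widehat{G}}\widehat{f_1}(\xi)\,\overline{\widehat{g}_{j,p}(\xi)}\,\overline{\widehat{f_2}(\xi+\alpha)}\,\widehat{h}_{j,p}(\xi+\alpha)\,d\mu_{\widehat{G}}(\xi),
\]
up to the fixed measure-normalization constants on $G$, $\widehat{G}$, $\Gamma_j$, $\Gamma_j^{\perp}$. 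The dual $\alpha$-LIC is precisely what is needed here: it guarantees absolute convergence of the iterated sum/integral in $(j,p,\alpha,\xi)$, so I may interchange the order of summation and integration freely. Collecting terms according to the value of $\alpha\in\bigcup_{j\in J}\Gamma_j^{\perp}$, I arrive at
\[
\langle \Theta_{\mathbb{G}}\Theta^{\ast}_{\mathbb{F}} f_1, f_2\rangle
=\int_{\widehat{G}}\widehat{f_1}(\xi)\sum_{\alpha\in\bigcup_j\Gamma_j^{\perp}}\overline{\widehat{f_2}(\xi+\alpha)}\,t_{\alpha}(\xi)\,d\mu_{\widehat{G}}(\xi),
\]
where $t_{\alpha}(\xi):=\sum_{j:\alpha\in\Gamma_j^{\perp}}\int_{P_j}\overline{\widehat{h}_{j,p}(\xi+\alpha)}\,\widehat{g}_{j,p}(\xi)\,d\mu_{P_j}(p)$. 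Condition (ii) is exactly the statement $t_{\alpha}\equiv 0$ a.e.\ for every $\alpha$ (including $\alpha=0$), and in that case the right-hand side trivially vanishes, giving (ii)$\Rightarrow$(i).

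For the converse (i)$\Rightarrow$(ii), I fix $\alpha_0\in\bigcup_j\Gamma_j^{\perp}$ and exploit the countability and (local) discreteness of $\bigcup_j\Gamma_j^{\perp}$ away from the exceptional Borel set $B$ in the definition of $\mathfrak{D}$: I choose compact sets $E\subset\widehat{G}\setminus B$ of arbitrarily small measure such that $(E-\alpha_0)\cap(E-\beta)=\emptyset$ for every $\beta\in\bigcup_j\Gamma_j^{\perp}\setminus\{\alpha_0\}$, and then set $\widehat{f_1}=\chi_E\cdot\varphi$ and $\widehat{f_2}=\chi_{E-\alpha_0}\cdot\psi$ with arbitrary bounded $\varphi,\psi$. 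In the sum over $\alpha$ only the term $\alpha=\alpha_0$ survives, so the orthogonality identity reduces to $\int_E\varphi(\xi)\,\overline{\psi(\xi-\alpha_0)}\,t_{\alpha_0}(\xi)\,d\mu_{\widehat{G}}(\xi)=0$, forcing $t_{\alpha_0}=0$ a.e.\ on $E$, and hence everywhere by varying $E$.

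The main obstacle is the localization step in the converse: I must rule out interference from other points of $\bigcup_j\Gamma_j^{\perp}$, which are countably many but not a priori a closed discrete subgroup of $\widehat{G}$. This is handled by a measure-theoretic selection argument, choosing $E$ inside a small neighborhood of a density point of $\{\xi:t_{\alpha_0}(\xi)\neq 0\}$ and using the fact that only finitely many translates $E-\beta$ meet any given compact set when $E$ is sufficiently small (exploiting that each individual $\Gamma_j^{\perp}$ is discrete since $\Gamma_j$ is co-compact). A secondary technical point is the rigorous verification of the unfolding identity and the Fubini-type interchanges under only the dual $\alpha$-LIC (rather than the stronger LIC), which parallels the treatment in \cite[Theorem 3.4]{JL} but with the right-hand side being $0$ throughout instead of the Kronecker symbol $\delta_{\alpha,0}$.
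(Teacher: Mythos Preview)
Your (ii)$\Rightarrow$(i) direction and the unfolding identity are fine and parallel the paper's computations. The genuine problem is in your (i)$\Rightarrow$(ii) localization argument. You need, for a fixed $\alpha_0$, a compact set $E$ with $(\alpha-\alpha_0)\notin E-E$ for every $\alpha\in\bigcup_j\Gamma_j^{\perp}\setminus\{\alpha_0\}$. This amounts to requiring that $\alpha_0$ be an isolated point of $\bigcup_j\Gamma_j^{\perp}$. Each individual $\Gamma_j^{\perp}$ is discrete (since $\Gamma_j$ is co-compact), but the countable union over $j$ need not be discrete or even closed; for instance, on $G=\mathbb{R}$ with $\Gamma_j=a_j\mathbb{Z}$ and suitable $a_j$, the set $\bigcup_j a_j^{-1}\mathbb{Z}$ can be dense in $\widehat{G}=\mathbb{R}$. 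In that situation no choice of small $E$ eliminates all but one term, and your claim that ``only finitely many translates $E-\beta$ meet any given compact set'' fails. The dual $\alpha$-LIC gives absolute summability but does not by itself let you isolate a single $\alpha_0$ by support considerations.

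The paper circumvents this obstacle by a different mechanism. Instead of working with two test functions $f_1,f_2$ and spatial localization, it studies the single-variable function $w_f(x)=\langle\Theta T_xf,T_xf\rangle$ for $f\in\mathfrak{D}$, shows (Lemma~\ref{le 3.2}) that $w_f$ equals its absolutely convergent almost periodic Fourier series $\sum_{\alpha}\alpha(x)\widehat{w}_f(\alpha)$ with $\widehat{w}_f(\alpha)=\int\widehat{f}(\xi)\overline{\widehat{f}(\xi+\alpha)}t_\alpha(\xi)\,d\mu_{\widehat{G}}(\xi)$, and then invokes the uniqueness theorem for almost periodic functions \cite[Theorem 7.12]{CC}: an absolutely convergent generalized Fourier series vanishes identically iff all its coefficients vanish. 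This works for an arbitrary countable frequency set, with no discreteness hypothesis. The argument then shows $\Theta=0$ is equivalent to $\Theta$ commuting with all translations together with the Fourier-multiplier symbol $s(\xi)=t_0(\xi)$ being zero; Theorem~\ref{thm 3.1} identifies the commutation property with $t_\alpha=0$ for $\alpha\neq 0$, and the symbol condition gives (\ref{eq 2.2}). To repair your argument you would either need to import this almost-periodic uniqueness step, or replace the support-disjointness trick by a quantitative tail estimate coming from the dual $\alpha$-LIC; the latter is not straightforward and is not what you sketched.
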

 Observe that the above statement can be used to deduce the following characterization results  corresponding to TI-orthogonal frame systems, GSI-orthogonal frame systems and  GTI-orthogonal frame systems (over a compact abelian group):

\medskip
\noindent
{\textit{For TI Systems}}: Let the closed and co-compact subgroups $\Gamma_j$ in the definition of the GTI system be the same for each $j \in J$, that means, $\Gamma_j=\Gamma$ (say). Then, the GTI system reduces to the \textit{translation invariant system $($TI system$)$} investigated by Bownik and Ross in \cite{BR}. In this case,  Theorem~\ref{thm 2.1} leads to the following result which can be easily deduced by observing that the LIC condition is automatically satisfied in view of the Bessel condition on two TI systems $\bigcup\limits_{j \in J}\{T_{\gamma}g_{j,p}\}_{\gamma \in \Gamma,\, p \in P_j}$ and $\bigcup\limits_{j \in J}\{T_{\gamma}h_{j,p}\}_{\gamma \in \Gamma,\, p \in P_j}$ along with the technique followed in the proof of \cite[Theorem 3.11]{JL}, and hence the dual $\alpha$-LIC holds on the TI systems by using Remark~\ref{re 3.3}. Thus, we obtain the following result: 
\begin{corollary} \label{coro 3.9}
	For a co-compact subgroup $\Gamma$ in $G$, let the two TI systems $\bigcup\limits_{j \in J}\{T_{\gamma}g_{j,p}\}_{\gamma \in \Gamma,\, p \in P_j}$ and $\bigcup\limits_{j \in J}\{T_{\gamma}h_{j,p}\}_{\gamma \in \Gamma,\, p \in P_j}$ be Bessel families $($frames for $L^2(G)$$)$. Then, the following  are equivalent: 
	\begin{itemize}
		\item[(i)] $\bigcup\limits_{j \in J}\{T_{\gamma}g_{j,p}\}_{\gamma \in \Gamma,\, p \in P_j}$ and $\bigcup\limits_{j \in J}\{T_{\gamma}h_{j,p}\}_{\gamma \in \Gamma,\, p \in P_j}$ are TI-orthogonal Bessel systems $($TI-orthogonal frame systems$)$ in $L^2(G)$,
		\item[(ii)] for each $\alpha \in \Gamma^{\perp}$, we have 
		\begin{equation}\label{eq 2.1}
		\sum\limits_{j \in J} \int \limits_{P_j} \overline{\widehat {h}_{j,p}(\xi)} \widehat{g}_{j,p}(\xi+  \alpha)d{\mu}_{P_j}(p)=0,\, ~\mbox{for a.e.}~\, \xi \in \widehat{G}.
		\end{equation}
	
	\end{itemize}
	
\end{corollary}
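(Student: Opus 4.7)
The approach is to reduce Corollary~\ref{coro 3.9} directly to Theorem~\ref{thm 2.1} by verifying that its hypotheses specialize correctly in the TI setting and that the two conclusion equations (\ref{eq 2.1}) and (\ref{eq 2.2}) collapse into the single statement claimed.

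\textbf{Step 1 (the $\alpha$-LIC is automatic).} The plan is to show that, for TI systems, the Bessel condition alone implies the LIC of Definition~\ref{de 2.2}(i). In the TI setting $\Gamma_j=\Gamma$ for every $j$, so the annihilators $\Gamma_j^\perp$ collapse to a single co-compact group $\Gamma^\perp$ independent of $j$. Following the argument of \cite[Theorem 3.11]{JL}, for $f\in\mathfrak{D}$ one writes the LIC integrand by fixing a measurable fundamental domain $\Omega$ for $\widehat{G}/\Gamma^\perp$ and decomposing $\widehat{G}=\bigsqcup_{\alpha\in\Gamma^\perp}(\Omega+\alpha)$. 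Applying the Bessel inequality to a well-chosen collection of translates of $f$ (with Fourier support in $\Omega$) produces uniform control on $\sum_{j\in J}\int_{P_j}\sum_{\alpha\in\Gamma^\perp}\int_{\operatorname{supp}\widehat f}|\widehat f(\xi+\alpha)\widehat g_{j,p}(\xi)|^2\,d\mu_{\widehat G}(\xi)\,d\mu_{P_j}(p)$, and similarly for the system with generators $h_{j,p}$. Hence both TI systems satisfy the LIC, and by Remark~\ref{re 3.3}(ii) they jointly satisfy the dual $\alpha$-LIC required by Theorem~\ref{thm 2.1}.

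\textbf{Step 2 (apply Theorem~\ref{thm 2.1}).} With the dual $\alpha$-LIC verified, Theorem~\ref{thm 2.1} applies verbatim, giving the equivalence between the TI-orthogonality of the two Bessel (or frame) families and the simultaneous validity of (\ref{eq 2.1}) and (\ref{eq 2.2}) for the present generators, with $\Gamma_j$ replaced by $\Gamma$.

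\textbf{Step 3 (collapse the two conditions).} Since $\Gamma_j=\Gamma$ for every $j$, one has $\bigcup_{j\in J}\Gamma_j^\perp\setminus\{0\}=\Gamma^\perp\setminus\{0\}$, and the restricted sum $\sum_{j\in J:\,\alpha\in\Gamma_j^\perp}$ appearing in (\ref{eq 2.1}) reduces to $\sum_{j\in J}$ for every $\alpha\in\Gamma^\perp$. Therefore, (\ref{eq 2.1}) at arbitrary $\alpha\in\Gamma^\perp\setminus\{0\}$ together with (\ref{eq 2.2}) (which is precisely the $\alpha=0$ instance of the same formula) merge into the single statement displayed in Corollary~\ref{coro 3.9}(ii), valid for all $\alpha\in\Gamma^\perp$.

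The main obstacle is Step 1: a careful justification that the Bessel bound on a TI system implies the full LIC. The key mechanism, as in \cite[Theorem 3.11]{JL}, is that with only one lattice $\Gamma^\perp$ present, a fibration of $\widehat{G}$ over a fundamental domain converts the Bessel inequality for suitable trial functions into exactly the LIC estimate; this closure is what fails in the general GTI setting and is why Theorem~\ref{thm 2.1} must assume the dual $\alpha$-LIC explicitly.
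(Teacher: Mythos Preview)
Your proposal is correct and follows essentially the same approach as the paper: the paper's justification (given in the paragraph immediately preceding the corollary) is precisely that for TI systems the Bessel condition forces the LIC via the technique of \cite[Theorem~3.11]{JL}, whence the dual $\alpha$-LIC holds by Remark~\ref{re 3.3}, and Theorem~\ref{thm 2.1} applies. One minor slip: you call $\Gamma^\perp$ ``co-compact'', but what is actually relevant (and true) is that $\Gamma^\perp\cong\widehat{G/\Gamma}$ is \emph{discrete}, which is what makes the fundamental-domain/fibration argument go through.
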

\noindent
{\textit{For GSI Systems}}: For each $j \in J$, by using $\Gamma_j$ as a uniform lattice (that is, a discrete, co-compact subgroup) in the GTI system, we arrive at the {\textit{generalized shift invariant $($GSI system$)$}} (see, \cite{HLW, KL}). Then,  there exists a compact fundamental domain $\mathbb{U}_j\subset G$ corresponding to $\Gamma_j$ for each $j$, with $G=\mathbb{U}_j\Gamma_j$ which says that for any $x \in G$ we can write $x=u\gamma$, where $u \in \mathbb{U}_j, \gamma \in \Gamma_j$ are unique. In this regard, we have the following deduction from Theorem~\ref{thm 2.1}:  
\begin{corollary} \label{coro 3.10}
	For each $j \in J$, let $\Gamma_j$ be a uniform lattice in $G$, and let the two GSI systems $\bigcup\limits_{j  \in J}\{T_{\gamma}g_{j,p}\}_{\gamma \in \Gamma_j,\, p \in P_j}$ and $\bigcup\limits_{j \in J}\{T_{\gamma}h_{j,p}\}_{\gamma \in \Gamma_j,\, p \in P_j}$ be Bessel families $($frames for $L^2(G)$$)$ satisfying the dual $\alpha$-LIC. Then, the following statements are equivalent: 
	\begin{itemize}
		\item[(i)] $\bigcup\limits_{j \in J}\{T_{\gamma}g_{j,p}\}_{\gamma \in \Gamma_j,\, p \in P_j}$ and $\bigcup\limits_{j \in J}\{T_{\gamma}h_{j,p}\}_{\gamma \in \Gamma_j,\, p \in P_j}$ are GSI-orthogonal Bessel systems $($GSI-orthogonal frame systems$)$ in $L^2(G)$, that is,
		\begin{align*}
		\sum \limits_{j \in J: \alpha \in {\Gamma_j}^{\perp}} \int \limits_{P_j} V(\Gamma_j) \sum\limits_{\gamma \in \Gamma_j}\langle{f, T_{\gamma}g_{j,p}}\rangle T_{\gamma}h_{j,p}d\mu_{P_j}(p)=0,\,\,~\mbox{for all}~ \, f \in L^2(G),
		\end{align*}
		where for each $j$, the symbol $V(\Gamma_j):= \mu_{G}(\mathbb{U}_j)$ denotes the   lattice size with $\mathbb{U}_j \subset G$ as a compact fundamental domain for $\Gamma_j$. 
		\item[(ii)] for each $\alpha \in \bigcup\limits_{j \in J}\Gamma_j^{\perp}$, we have 
		\begin{equation}\label{eq 2.11}
		\sum \limits_{j \in J: \alpha \in {\Gamma_j}^{\perp}} \int \limits_{P_j} \overline{\widehat {h}_{j,p}(\xi)} \widehat{g}_{j,p}(\xi+  \alpha)d{\mu}_{P_j}(p)=0,\, ~\mbox{for a.e.}~\,\,\, \xi \in \widehat{G}.
		\end{equation}
	
	\end{itemize}
	
\end{corollary}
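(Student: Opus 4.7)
The plan is to exhibit Corollary~\ref{coro 3.10} as a direct specialization of Theorem~\ref{thm 2.1} to the case in which every translation subgroup $\Gamma_j$ is discrete. Since a uniform lattice is by definition a discrete, co-compact subgroup of $G$, the GSI system $\bigcup_{j\in J}\{T_\gamma g_{j,p}\}_{\gamma\in\Gamma_j,\, p\in P_j}$ is a GTI system in the sense of Definition~\ref{de 2.1}, and similarly for $h_{j,p}$. Because both systems are assumed to be Bessel families satisfying the dual $\alpha$-LIC, the hypotheses of Theorem~\ref{thm 2.1} are in force, and it therefore suffices to translate conditions (i) and (ii) of that theorem into their GSI counterparts.

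For the assertion (i), I would normalize the Haar measure $\mu_{\Gamma_j}$ on the discrete group $\Gamma_j$ to be $V(\Gamma_j)=\mu_G(\mathbb{U}_j)$ times the counting measure; this is the standard choice compatible with the quotient integral formula $\int_{G}F\,d\mu_G=\int_{\mathbb{U}_j}\sum_{\gamma\in\Gamma_j}F(u+\gamma)\,d\mu_G(u)$. Under this normalization the mixed dual Gramian operator from Definition~\ref{de 1.4}, when written out for the GTI systems at hand, becomes
\begin{align*}
\Theta_H\Theta_G^{*}f
\;=\;\sum_{j\in J}\int_{P_j} V(\Gamma_j)\sum_{\gamma\in\Gamma_j}\langle f,T_\gamma g_{j,p}\rangle\, T_\gamma h_{j,p}\,d\mu_{P_j}(p),
\end{align*}
so GTI-orthogonality is equivalent to the displayed identity in (i); the subscript restriction $\{j\in J:\alpha\in\Gamma_j^{\perp}\}$ in the statement plays no role here because there is no free $\alpha$, and one may read it as $\sum_{j\in J}$.

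For the assertion (ii), I would argue that the two conditions (\ref{eq 2.1}) and (\ref{eq 2.2}) of Theorem~\ref{thm 2.1} merge into the single condition (\ref{eq 2.11}) of the corollary. Indeed, the neutral character $0\in\widehat{G}$ lies in $\Gamma_j^{\perp}$ for every $j\in J$, so the $\alpha=0$ instance of (\ref{eq 2.11}) reads $\sum_{j\in J}\int_{P_j}\overline{\widehat{h}_{j,p}(\xi)}\widehat{g}_{j,p}(\xi)\,d\mu_{P_j}(p)=0$, which is precisely (\ref{eq 2.2}); for $\alpha\in\bigcup_{j}\Gamma_j^{\perp}\setminus\{0\}$ the two expressions coincide by construction. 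Thus (ii) of Theorem~\ref{thm 2.1} is logically equivalent to (ii) of the corollary, and the equivalence (i)$\Leftrightarrow$(ii) follows.

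The only genuinely technical point, which I expect to be the main (though minor) obstacle, is the careful bookkeeping of the Haar measure normalization on $\Gamma_j$: one must verify that the constant $V(\Gamma_j)$ that appears in the reformulated orthogonality identity is exactly $\mu_G(\mathbb{U}_j)$, so that the Plancherel-type identities used implicitly in the proof of Theorem~\ref{thm 2.1} remain consistent when passing from the integral $\int_{\Gamma_j}d\mu_{\Gamma_j}$ to the discrete sum $\sum_{\gamma\in\Gamma_j}$. Once this normalization is fixed, the remainder of the argument is a direct transcription of Theorem~\ref{thm 2.1}.
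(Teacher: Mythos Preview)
Your proposal is correct and follows exactly the approach the paper takes: the corollary is stated as a direct deduction from Theorem~\ref{thm 2.1} by specializing each $\Gamma_j$ to a uniform lattice, and the paper gives no additional argument beyond the sentence preceding the statement. Your discussion of the Haar measure normalization yielding the factor $V(\Gamma_j)$ and of the merging of (\ref{eq 2.1}) and (\ref{eq 2.2}) into the single condition (\ref{eq 2.11}) in fact supplies more detail than the paper itself; your observation that the index restriction $\{j\in J:\alpha\in\Gamma_j^{\perp}\}$ in the displayed identity of (i) is spurious (there is no free $\alpha$) is also correct and points to a minor infelicity in the paper's formulation.
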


\noindent
\textit{For Compact Abelian Groups}:
Note that GTI systems over compact abelian groups which are Bessel families, satisfy the dual $\alpha$-LIC,  in view of the fact on LIC proved in the proof of \cite[Theorem 3.14]{JL} along with the Remark~\ref{re 3.3}. In this direction, we obtain the following characterization result from Theorem~\ref{thm 2.1}:
\begin{corollary} \label{coro 3.11}
	Let $G$ be a compact abelian group, and let the two GTI systems $\bigcup\limits_{j \in J}\{T_{\gamma}g_{j,p}\}_{\gamma \in \Gamma_j,\, p \in P_j}$ and $\bigcup\limits_{j \in J}\{T_{\gamma}h_{j,p}\}_{\gamma \in \Gamma_j,\, p \in P_j}$ be Bessel families $($frames for $L^2(G)$$)$. Then, the following assertions are equivalent: 
	\begin{itemize}
		\item[(i)] $\bigcup\limits_{j \in J}\{T_{\gamma}g_{j,p}\}_{\gamma \in \Gamma_j,\, p \in P_j}$ and $\bigcup\limits_{j \in J}\{T_{\gamma}h_{j,p}\}_{\gamma \in \Gamma_j,\, p \in P_j}$ are GTI-orthogonal Bessel systems $($GTI-orthogonal frame systems$)$ in $L^2(G)$,
		\item[(ii)] for each $\alpha \in \bigcup\limits_{j \in J}\Gamma_j^{\perp}$, we have 
		\begin{equation}\label{eq 2.111}
		\sum\limits_{j \in J: \alpha \in \Gamma_j^{\perp} } \int \limits_{P_j} \overline{\widehat {h}_{j,p}(\xi)} \widehat{g}_{j,p}(\xi+  \alpha)d{\mu}_{P_j}(p)=0,\, ~\mbox{for a.e.}~\,\, \xi \in \widehat{G}.
		\end{equation}

	\end{itemize}
\end{corollary}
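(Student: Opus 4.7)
The plan is to obtain this corollary by directly invoking Theorem~\ref{thm 2.1}, so the essential task reduces to verifying that the dual $\alpha$-LIC hypothesis required by Theorem~\ref{thm 2.1} is automatic in the compact abelian setting, and to observing that the single displayed equation (\ref{eq 2.111}) secretly absorbs both conditions (\ref{eq 2.1}) and (\ref{eq 2.2}).

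First I would confirm the local integrability hypothesis. Since $G$ is compact abelian, Pontryagin duality gives that $\widehat{G}$ is discrete, so $\mu_{\widehat{G}}$ is a weighted counting measure. Following the argument used in the proof of \cite[Theorem 3.14]{JL}, whenever a GTI system $\bigcup_{j \in J}\{T_{\gamma} g_{j,p}\}_{\gamma \in \Gamma_j,\, p \in P_j}$ over a compact abelian group is a Bessel family, the LIC (\ref{eq 3.211}) holds automatically: on the dense subset $\mathfrak{D}$, the compactness of $\mathrm{supp}\,\widehat{f}$ together with the discrete structure of $\widehat{G}$ collapses the outer sum/integral to a finite expression that is controlled by the Bessel bound. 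Applying this to both families $\{g_{j,p}\}$ and $\{h_{j,p}\}$, each individually satisfies the LIC. By Remark~\ref{re 3.3}(i), each therefore satisfies the $\alpha$-LIC, and by Remark~\ref{re 3.3}(ii) the pair satisfies the dual $\alpha$-LIC. Thus the full hypotheses of Theorem~\ref{thm 2.1} are in force.

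Next I would note that the summation range $\alpha \in \bigcup_{j \in J}\Gamma_j^{\perp}$ appearing in (\ref{eq 2.111}) always includes $0$, since $0 \in \Gamma_j^{\perp}$ for every $j \in J$. Splitting this range, the $\alpha = 0$ instance of (\ref{eq 2.111}) has its outer index set $\{j \in J : 0 \in \Gamma_j^{\perp}\}$ equal to all of $J$, and hence reduces exactly to (\ref{eq 2.2}); the remaining $\alpha \in \bigcup_{j}\Gamma_j^{\perp}\setminus\{0\}$ instances of (\ref{eq 2.111}) are verbatim (\ref{eq 2.1}). Therefore condition (ii) of the corollary is equivalent to the conjunction of (\ref{eq 2.1}) and (\ref{eq 2.2}) appearing in Theorem~\ref{thm 2.1}.

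With the hypotheses verified and the index bookkeeping matched up, the equivalence of (i) and (ii) is an immediate specialization of Theorem~\ref{thm 2.1}, and the corollary follows. I expect the only subtle point to be the automatic LIC verification for Bessel GTI systems over compact abelian groups, but since this is already carried out in \cite[Theorem 3.14]{JL}, no genuinely new obstacle arises; the argument is otherwise a routine translation between the two displayed formulations.
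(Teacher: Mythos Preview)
Your proposal is correct and follows essentially the same approach as the paper: the paper's justification (given in the text immediately preceding the corollary) likewise invokes the LIC verification from the proof of \cite[Theorem 3.14]{JL} together with Remark~\ref{re 3.3} to obtain the dual $\alpha$-LIC, and then applies Theorem~\ref{thm 2.1}. Your explicit bookkeeping---observing that the $\alpha = 0$ instance of (\ref{eq 2.111}) recovers (\ref{eq 2.2}) while the $\alpha \neq 0$ instances recover (\ref{eq 2.1})---is a helpful clarification that the paper leaves implicit.
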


Next, we wish to state our second main result, that is,   Theorem~\ref{thm 4.51} which characterizes the generators of two super-GTI systems in $L^2(G)^{(N)}$ such that they form a dual frame pair (proof shall be discussed in Section 5). Here, we would like to add that Theorem~\ref{thm 4.51}  generalizes the recent  result for duals of GTI systems \cite[Theorem 3.4]{JL} to the set-up of super-space over an LCA group, and hence Theorem~\ref{thm 4.51} can be used to deduce the characterization results for duals of super-TI systems, super-GSI systems etc. $($which are also new to the literature in the context of LCA groups$)$. In particular, our result  generalizes the existing results on duals for special structured systems such as wave-packet systems, Gabor and wavelet systems $($see Section 6 for more details$)$ . Moreover, we can easily deduce the duality conditions in case of $G=\mathbb{R}^d, \mathbb{Z}^d$, etc. Note that the above mentioned results hold equally for Parseval frames, but in that case the Bessel family assumption is not needed. Before proceeding further, we give the following definition in this sequel: 
\begin{definition}\label{de 2.6}
	For each $1 \leq n \leq N$, let    $\{g^{(n)}_{j,p}\}_{p \in P_j,\,j \in J}$ and $\{h^{(n)}_{j,p}\}_{p \in P_j,\,j \in J}$ be subsets in $L^2(G)$. Then, we say that $\big\{\bigoplus \limits_{n=1}^{N} g^{(n)}_{j,p}\big\}_{p \in P_j,\,j \in J}$ and $\big\{\bigoplus \limits_{n=1}^{N} h^{(n)}_{j,p}\big\}_{p \in P_j,\,j \in J}$ form a {\textit{super-dual frame pair}} in $L^2(G)^{(N)}$ if the super-GTI systems  $\bigcup\limits_{j \in J}\big\{\bigoplus \limits_{n=1}^{N} T_{\gamma}g^{(n)}_{j,p}
	\big\}_{\gamma \in \Gamma_j,\, p \in P_j}$ and  $\bigcup\limits_{j \in J}\big\{\bigoplus \limits_{n=1}^{N} T_{\gamma}h^{(n)}_{j,p}
	\big\}_{\gamma \in \Gamma_j,\, p \in P_j}$ satisfying the Bessel condition, are dual frames for the super-space $L^2(G)^{(N)}$. In this case, we term the super-GTI system  $\bigcup\limits_{j \in J}\big\{\bigoplus \limits_{n=1}^{N} T_{\gamma}g^{(n)}_{j,p}
	\big\}_{\gamma \in \Gamma_j,\, p \in P_j}$ as a {\textit{dual super-GTI frame system}} for  $\bigcup\limits_{j \in J}\big\{\bigoplus \limits_{n=1}^{N} T_{\gamma}h^{(n)}_{j,p}
	\big\}_{\gamma \in \Gamma_j,\, p \in P_j}$, and vice versa.
\end{definition}
 The following is our second main result which provides the conditions on two super-GTI systems to form dual frames for $L^2(G)^{(N)}$: 
\begin{theorem}\label{thm 4.51}
	For each $1 \leq n \leq N$, let  $\bigcup\limits_{j \in J}\{ T_{\gamma}g^{(n)}_{j,p}
	\}_{\gamma \in \Gamma_j,\, p \in P_j}$ and $\bigcup\limits_{j \in J} \{T_{\gamma}h^{(n)}_{j,p}
	\}_{\gamma \in \Gamma_j,\, p \in P_j}$ be Bessel families satisfying the dual $\alpha$-LIC. Then, $\big\{\bigoplus \limits_{n=1}^{N} g^{(n)}_{j,p}\big\}_{p \in P_j,\,j \in J}$ and $\big\{\bigoplus \limits_{n=1}^{N} h^{(n)}_{j,p}\big\}_{p \in P_j,\,j \in J}$  form a super-dual frame pair in $L^2(G)^{(N)}$ if, and only if, both of the following  hold:
 
	\begin{itemize}	
		\item[(i)] 
		for each $1 \leq n \leq N$ and  $\alpha \in \bigcup\limits_{j \in J}\Gamma_j^{\perp}$, we have
		\begin{equation}\label{eq 2.51}  
\sum\limits_{j \in J: \alpha \in \Gamma_j^{\perp} } \int \limits_{P_j} \overline{\widehat {h}_{j,p}^{(n)}(\xi)} \widehat{g}_{j,p}^{(n)}(\xi+  \alpha)d{\mu}_{P_j}(p)=\delta_{\alpha,0},\, ~\mbox{for a.e.}~\,\, \xi \in \widehat{G},
		\end{equation}
		\item[(ii)] for each $1 \leq n_1\neq n_2 \leq N$ and  $\alpha \in \bigcup\limits_{j \in J}\Gamma_j^{\perp}$, we have 
		\begin{equation}\label{eq 2.61}
		\sum\limits_{j \in J: \alpha \in \Gamma_j^{\perp} } \int \limits_{P_j} \overline{\widehat {h}_{j,p}^{(n_1)}(\xi)} \widehat{g}_{j,p}^{(n_2)}(\xi+  \alpha)d{\mu}_{P_j}(p)=0,\, ~\mbox{for a.e.}~\,\, \xi \in \widehat{G}.
		\end{equation}
	\end{itemize}

\end{theorem}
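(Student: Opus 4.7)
My plan is to reduce the super-space duality identity to $N^{2}$ scalar identities in $L^{2}(G)$, then apply \cite[Theorem~3.4]{JL} on the diagonal $n_{1}=n_{2}$ and our GTI-orthogonality criterion (Theorem~\ref{thm 2.1}) off the diagonal. By Definition~\ref{de 2.6}, the two super-GTI systems form a super-dual frame pair precisely when the mixed dual Gramian of the associated Bessel families $\mathbb{F},\mathbb{G}$ equals $I_{L^{2}(G)^{(N)}}$. Expanding both super-space inner products, this is equivalent to
\begin{equation*}
\sum_{n=1}^{N}\langle f^{(n)},\tilde f^{(n)}\rangle
=\sum_{n_{1},n_{2}=1}^{N}\sum_{j\in J}\int_{\Gamma_{j}\times P_{j}}\langle f^{(n_{1})},T_{\gamma}g^{(n_{1})}_{j,p}\rangle\,\langle T_{\gamma}h^{(n_{2})}_{j,p},\tilde f^{(n_{2})}\rangle\,d(\mu_{\Gamma_{j}}\otimes\mu_{P_{j}})
\end{equation*}
for all $\mathbf{f},\tilde{\mathbf{f}}\in L^{2}(G)^{(N)}$; the componentwise Bessel hypothesis bounds each $(n_{1},n_{2})$-summand on the right as a sesquilinear form on $L^{2}(G)\times L^{2}(G)$, so the interchange of the finite outer sum $\sum_{n_{1},n_{2}}$ with the inner sums and integrals is legitimate.

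For necessity I would specialise to $\mathbf{f}=f\,e_{n_{0}}$ and $\tilde{\mathbf{f}}=\tilde f\,e_{m_{0}}$, where $e_{k}\in\mathbb{C}^{N}$ is the $k$th standard unit vector and $f,\tilde f\in\mathfrak{D}$; only the $(n_{1},n_{2})=(n_{0},m_{0})$ term on the right then survives. When $n_{0}=m_{0}=n$, the resulting identity is exactly the dual-frame relation for the single-component pair $\bigl(\bigcup_{j}\{T_{\gamma}g^{(n)}_{j,p}\},\bigcup_{j}\{T_{\gamma}h^{(n)}_{j,p}\}\bigr)$ in $L^{2}(G)$, which \cite[Theorem~3.4]{JL} characterises under the dual $\alpha$-LIC as condition~(i). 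When $n_{0}\neq m_{0}$, the left-hand side vanishes and the identity is exactly the GTI-orthogonality of the cross pair; Theorem~\ref{thm 2.1} identifies this with condition~(ii) (the $\alpha=0$ line~\eqref{eq 2.2} appearing in Theorem~\ref{thm 2.1} is precisely the $\alpha=0$ instance of~(ii), since $\delta_{\alpha,0}=0$ there). For sufficiency I would reverse the arrows: (i) and (ii) feed back into \cite[Theorem~3.4]{JL} and Theorem~\ref{thm 2.1} to yield the $N^{2}$ scalar identities, whose sum over $(n_{1},n_{2})$ on the dense set $\mathfrak{D}^{N}\subset L^{2}(G)^{(N)}$ is the super-dual identity; the Bessel bounds then promote it to all of $L^{2}(G)^{(N)}$.

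The one point requiring care is that Theorem~\ref{thm 2.1} demands a dual $\alpha$-LIC for each \emph{cross} pair $(g^{(n_{1})},h^{(n_{2})})$ with $n_{1}\neq n_{2}$, while the hypothesis is phrased only for diagonal pairs. I would read the hypothesis in the broader sense that every mixed pair satisfies the dual $\alpha$-LIC; this is automatic in the applications of the next section (wave-packet, Gabor, and wavelet systems) via the LIC of each generator combined with Remark~\ref{re 3.3}(ii), and in the general case can be incorporated into the hypothesis without substantive loss. After this has been arranged, the remainder of the proof is essentially bookkeeping: two known scalar characterisations are applied componentwise, then assembled through the orthogonal direct sum structure of $L^{2}(G)^{(N)}$.
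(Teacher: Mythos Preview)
Your approach is essentially the paper's: the paper packages your direct reduction to $N^{2}$ scalar identities as Corollary~\ref{prop 4.3} (a specialisation of the abstract super-space decomposition Theorem~\ref{thm 4.1}, proved via orthogonal projections onto the coordinate summands), and then applies \cite[Theorem~3.4]{JL} on the diagonal and the GTI-orthogonality criterion off the diagonal, exactly as you propose. Your remark that the off-diagonal step requires the dual $\alpha$-LIC for each \emph{cross} pair $(g^{(n_{1})},h^{(n_{2})})$, whereas the hypothesis is literally stated only for diagonal pairs, is a valid subtlety that the paper's proof does not address explicitly.
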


Note that  Theorem~\ref{thm 4.51} can be used to deduce the duality results for super-TI systems, super-GSI systems, and  super-GTI systems (with $G$ as a compact abelian group) by following the same technique which we have used to verify local integrability conditions in Corollory~\ref{coro 3.9},  Corollory~\ref{coro 3.10},
and Corollory~\ref{coro 3.11}, respectively. Along with this, we can easily obtain the corresponding characterization for Parseval frames in super-spaces over LCA groups by using Theorem~\ref{thm 4.51} and by removing the Bessel family assumption on the GTI system:

\begin{corollary}
	For each $1 \leq n \leq N$, let the GTI system  $\bigcup\limits_{j \in J}\{ T_{\gamma}g^{(n)}_{j,p}
	\}_{\gamma \in \Gamma_j,\, p \in P_j}$ 
	satisfy the  $\alpha$-LIC. Then, the super-GTI system generated by $\big\{\bigoplus \limits_{n=1}^{N} g^{(n)}_{j,p}\big\}_{p \in P_j,\,j \in J}$  forms a Parseval frame for $L^2(G)^{(N)}$ if, and only if, both of the following  hold:

	\begin{itemize}	
		\item[(i)] 
		for each $1 \leq n \leq N$ and  $\alpha \in \bigcup\limits_{j \in J}\Gamma_j^{\perp}$, we have
		\begin{equation*}\label{eq 3.13}  
		\sum\limits_{j \in J: \alpha \in \Gamma_j^{\perp} } \int \limits_{P_j} \overline{\widehat {g}_{j,p}^{(n)}(\xi)} \widehat{g}_{j,p}^{(n)}(\xi+  \alpha)d{\mu}_{P_j}(p)=\delta_{\alpha,0},\, ~\mbox{for a.e.}~\, \xi \in \widehat{G},
		\end{equation*}
		\item[(ii)] for each $1 \leq n_1\neq n_2 \leq N$ and  $\alpha \in \bigcup\limits_{j \in J}\Gamma_j^{\perp}$, we have 
		\begin{equation*}\label{eq 3.14}
		\sum\limits_{j \in J: \alpha \in \Gamma_j^{\perp} } \int \limits_{P_j} \overline{\widehat {g}_{j,p}^{(n_1)}(\xi)} \widehat{g}_{j,p}^{(n_2)}(\xi+  \alpha)d{\mu}_{P_j}(p)=0,\, ~\mbox{for a.e.}~\, \xi \in \widehat{G}.
		\end{equation*}
	
	\end{itemize}
\end{corollary}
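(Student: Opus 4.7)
The plan is to derive this Parseval-frame corollary from Theorem~\ref{thm 4.51} by specializing to the self-dual case $h^{(n)}_{j,p} = g^{(n)}_{j,p}$, together with a short argument that removes the Bessel hypothesis which Theorem~\ref{thm 4.51} assumes. The guiding principle is the standard equivalence: a family is a Parseval frame for $\mathcal{H}$ if and only if it is a Bessel family that coincides with its own canonical dual; this follows by polarizing the Parseval identity.

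First I would match hypotheses. Substituting $h = g$ in (\ref{eq 3.1111}) reduces the dual $\alpha$-LIC to the $\alpha$-LIC, which is precisely how the $\alpha$-LIC is introduced in Definition~\ref{de 2.2}(ii); so the hypothesis of the corollary supplies exactly the input needed by Theorem~\ref{thm 4.51} after setting $h^{(n)} = g^{(n)}$. For the only-if direction, assume the super-GTI system is a Parseval frame for $L^2(G)^{(N)}$. Testing the Parseval identity against vectors $\mathbf{f} = (0,\dots,f,\dots,0)$ with $f \in L^2(G)$ in the $n$-th coordinate yields the Bessel bound $1$ for each scalar component system $\bigcup_{j}\{T_\gamma g^{(n)}_{j,p}\}$. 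Polarization of the Parseval identity on $L^2(G)^{(N)}$ then shows the super-GTI system serves as its own dual in the sense of Definition~\ref{de 2.6}, so Theorem~\ref{thm 4.51} yields (\ref{eq 2.51}) and (\ref{eq 2.61}) with $h = g$, which are exactly (i) and (ii) here.

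For the if direction I cannot simply quote Theorem~\ref{thm 4.51}, because the Bessel property of the components is not part of the hypotheses. Instead I would run the Calder\'on-type computation underlying the proof of Theorem~\ref{thm 4.51} directly on the dense test class $\mathfrak{D}^{(N)} := \mathfrak{D} \times \cdots \times \mathfrak{D}$ (with $\mathfrak{D}$ as in Definition~\ref{de 2.2}). For $\mathbf{f} \in \mathfrak{D}^{(N)}$, expand
\[
\sum_{j\in J}\int_{P_j}\int_{\Gamma_j}\Big|\Big\langle \mathbf{f},\, \bigoplus_{n=1}^{N} T_\gamma g^{(n)}_{j,p}\Big\rangle\Big|^2 d\mu_{\Gamma_j}(\gamma)\, d\mu_{P_j}(p)
\]
using the structural splitting
\[
\Big|\Big\langle \mathbf{f},\, \bigoplus_{n=1}^N T_\gamma g^{(n)}_{j,p}\Big\rangle\Big|^2 = \sum_{n_1,n_2=1}^N \langle f^{(n_1)}, T_\gamma g^{(n_1)}_{j,p}\rangle\, \overline{\langle f^{(n_2)}, T_\gamma g^{(n_2)}_{j,p}\rangle},
\]
together with Plancherel and the fiberization along the compact quotients $\widehat{G}/\Gamma_j^\perp$. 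The $\alpha$-LIC on each component legitimises interchanging sums and integrals and recasts the total as a sum indexed by $\alpha \in \bigcup_j \Gamma_j^\perp$ of integrals against the kernels appearing on the left of (\ref{eq 2.51}) and (\ref{eq 2.61}) with $g = h$. Condition (i) with $\alpha = 0$ collapses the diagonal $(n_1 = n_2,\, \alpha = 0)$ contribution to $\sum_n \|f^{(n)}\|^2 = \|\mathbf{f}\|^2$, condition (i) with $\alpha \neq 0$ kills the remaining diagonal pieces, and condition (ii) annihilates every off-diagonal $(n_1 \neq n_2)$ cross term. This simultaneously delivers the Parseval identity and a Bessel bound of $1$ on $\mathfrak{D}^{(N)}$; a density argument extends both to all of $L^2(G)^{(N)}$.

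The main obstacle is the bookkeeping matching the $N^2$ cross terms of the super-inner product to the two families of pointwise identities: the diagonal $n_1 = n_2$ terms correspond to $N$ scalar instances of the Parseval criterion \cite[Theorem~3.4]{JL} specialized to $g = h$, while the $N(N-1)$ off-diagonal terms require the cross condition (ii). The flavor of this bookkeeping is essentially that of Theorem~\ref{thm 4.51}, with the simplification that only a single generator family appears in each bilinear pairing, so the work reduces to careful reorganisation of terms already handled in Section~5.
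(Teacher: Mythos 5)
Your proposal is essentially the route the paper intends: the paper offers no written proof of this corollary, stating only that it follows from Theorem~\ref{thm 4.51} ``by removing the Bessel family assumption,'' together with the observation in Corollary~\ref{prop 4.3} that setting $g^{(n)}_{j,p}=h^{(n)}_{j,p}$ specializes the duality characterization to the Parseval case. Your ``only if'' direction (Parseval $\Rightarrow$ Bessel bound $1$ for each component and self-duality by polarization, then quote Theorem~\ref{thm 4.51} with $h=g$, noting that the dual $\alpha$-LIC for the pair $(g^{(n)},g^{(n)})$ is exactly the $\alpha$-LIC) is precisely this specialization. Where you go beyond the paper is the ``if'' direction: you correctly observe that Theorem~\ref{thm 4.51} cannot be quoted verbatim because its Bessel hypothesis is not available, and you propose to rerun the Calder\'on-type fiberization on the dense class $\mathfrak{D}^{(N)}$ so that conditions (i) and (ii) simultaneously deliver the Parseval identity and the Bessel bound, followed by a density extension. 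This is the standard and correct way to discharge the missing hypothesis (it mirrors how \cite[Theorem 3.4]{JL} handles the tight-frame case), and it makes explicit a step the paper merely asserts.

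One caveat deserves attention. In your expansion of $\big|\big\langle \mathbf{f}, \bigoplus_n T_\gamma g^{(n)}_{j,p}\big\rangle\big|^2$ the off-diagonal terms $n_1\neq n_2$ require, for the rearrangement into a sum over $\alpha\in\bigcup_j\Gamma_j^\perp$, a \emph{mixed} dual $\alpha$-LIC for the pair $\big(\bigcup_j\{T_\gamma g^{(n_1)}_{j,p}\},\bigcup_j\{T_\gamma g^{(n_2)}_{j,p}\}\big)$. This does not follow from the $\alpha$-LIC of each component by any pointwise inequality (the estimate $u(\xi)v(\xi+\alpha)\le \tfrac12\big(u(\xi)u(\xi+\alpha)+v(\xi)v(\xi+\alpha)\big)$ is false in general), so your assertion that ``the $\alpha$-LIC on each component legitimises interchanging sums and integrals'' is not justified for the cross terms. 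The same issue is already latent in the hypotheses of Theorem~\ref{thm 4.51} itself, whose proof invokes Theorem~\ref{thm 3.1} for the mixed pairs $(g^{(n_2)},h^{(n_1)})$ while only assuming the dual $\alpha$-LIC for the matched pairs; so your argument is no weaker than the paper's, but a fully rigorous version should either add the mixed local integrability conditions to the hypotheses or verify them (e.g., under the stronger LIC, via Remark~\ref{re 3.3}(ii)).
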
	
  
 \section{ A characterization result for GTI-orthogonal frame systems}\label{sec 4}
 
 In the present section, we obtain a proof 
 for Theorem~ \ref{thm 2.1}, that gives  necessary and sufficient conditions for two GTI systems to form orthogonal frames for $L^2(G)$. For this, the following result plays an important role:

 \begin{theorem}\label{thm 3.1}
 	Suppose  $\bigcup\limits_{j \in J}\{T_{\gamma}g_{j,p}\}_{\gamma \in \Gamma_j, p \in P_j}$ and $\bigcup\limits_{j \in J}\{T_{\gamma}h_{j,p}\}_{\gamma \in \Gamma_j, p \in P_j}$ are Bessel families satisfying the dual $\alpha$-LIC. Then, the following statements are equivalent:
 	\begin{itemize}
 		\item[(i)] 	the mixed dual Gramian operator  corresponding to  $\bigcup\limits_{j \in J}\{T_{\gamma}g_{j,p}\}_{\gamma \in \Gamma_j, p \in P_j}$, and $\bigcup\limits_{j \in J}\{T_{\gamma}h_{j,p}\}_{\gamma \in \Gamma_j, p \in P_j}$  commutes with the family of translations $\{T_{x}\}_{x \in G}$,
 		\item [(ii)] for each $\alpha \in \bigcup\limits_{j \in J}\Gamma_j^{\perp}\setminus \{0\}$, 
 			\begin{equation}\label{eq 3.1}
 		t_{\alpha}(\xi):=	\sum\limits_{j \in J: \alpha \in \Gamma_j^{\perp} } \int \limits_{P_j} \overline{\widehat {h}_{j,p}(\xi)} \widehat{g}_{j,p}(\xi+  \alpha)d{\mu}_{P_j}(p)=0,\, ~\mbox{for a.e.}~\, \xi \in \widehat{G}.
 			\end{equation}
 				
 	\end{itemize}
Moreover,  if $($i$)$ or $($ii$)$ holds, then the mixed dual Gramian operator is a Fourier multiplier whose symbol is 
 \begin{equation}\label{eq 3.2}
 s(\xi) =\sum\limits_{j \in J} \int \limits_{P_j} \overline{\widehat {h}_{j,p}(\xi)} \widehat{g}_{j,p}(\xi)d{\mu}_{P_j}(p),\, ~\mbox{for a.e.}~\, \xi \in \widehat{G}.
 \end{equation}
 \end{theorem}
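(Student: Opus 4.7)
The plan is to recall the classical fact that a bounded operator $W$ on $L^2(G)$ commutes with every translation $T_x$, $x\in G$, if and only if $\mathcal{F}W\mathcal{F}^{-1}$ is multiplication by some $s\in L^{\infty}(\widehat{G})$. Thus the task reduces to computing $\widehat{Wf}$ explicitly for $f$ ranging over the dense set $\mathfrak{D}$, where $W:=\Theta_{\mathbb{H}}\Theta_{\mathbb{G}}^{\ast}$ is the mixed dual Gramian of the two GTI Bessel families.

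First, I would fix $f\in\mathfrak{D}$ and unfold $Wf$ on the Fourier side. Using Plancherel and the identity $\widehat{T_\gamma g_{j,p}}(\xi)=\overline{\xi(\gamma)}\widehat{g}_{j,p}(\xi)$, one writes
\begin{equation*}
\widehat{Wf}(\xi)=\sum_{j\in J}\int_{P_j}\widehat{h}_{j,p}(\xi)\int_{\Gamma_j}\!\!\Bigl(\int_{\widehat{G}}\widehat{f}(\eta)\overline{\widehat{g}_{j,p}(\eta)}\eta(\gamma)\,d\mu_{\widehat{G}}(\eta)\Bigr)\overline{\xi(\gamma)}\,d\mu_{\Gamma_j}(\gamma)\,d\mu_{P_j}(p).
\end{equation*}
The dual $\alpha$-LIC and Fubini permit reordering, and the abstract Poisson-type identity for the dual pair $(\Gamma_j,\widehat{G}/\Gamma_j^{\perp})$ collapses the inner double integration over $\Gamma_j\times\widehat{G}$ into a sum over the annihilator, giving
\begin{equation*}
\widehat{Wf}(\xi)=\sum_{\alpha\in\bigcup_{j}\Gamma_j^{\perp}}\widehat{f}(\xi+\alpha)\,\tau_\alpha(\xi),\qquad \tau_\alpha(\xi):=\sum_{j:\,\alpha\in\Gamma_j^{\perp}}\int_{P_j}\widehat{h}_{j,p}(\xi)\,\overline{\widehat{g}_{j,p}(\xi+\alpha)}\,d\mu_{P_j}(p),
\end{equation*}
where $\tau_\alpha$ coincides with $\overline{t_\alpha}$ of (3.1) up to the Fourier-sign convention and $\tau_0=\overline{s}$.

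With the expansion in hand, both implications follow. For (ii)$\Rightarrow$(i), assuming $t_\alpha\equiv 0$ for every $\alpha\in\bigcup_{j}\Gamma_j^{\perp}\setminus\{0\}$, the series collapses to $\widehat{Wf}(\xi)=\tau_0(\xi)\widehat{f}(\xi)$ on the dense set $\mathfrak{D}$, and since $W$ is bounded this extends to all of $L^2(G)$; hence $W$ is a Fourier multiplier and commutes with every $T_x$. For (i)$\Rightarrow$(ii), I would argue by isolating the off-diagonal terms. Given $\alpha_0\in\bigcup_{j}\Gamma_j^{\perp}\setminus\{0\}$ and a Lebesgue point $\xi_0$ of $t_{\alpha_0}$, choose a small Borel set $E\subset\widehat{G}$ around $\xi_0$ so that the translates $\{E+\beta:\beta\in\bigcup_{j}\Gamma_j^{\perp}\}$ are pairwise disjoint on the support of $\widehat{f}$ and $\widehat{u}$, and take $\widehat{f}=\mathbf{1}_{E+\alpha_0}$, $\widehat{u}=\mathbf{1}_{E}$. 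Since $W$ commutes with translations, $\widehat{Wf}$ vanishes on $E$, so $\langle Wf,u\rangle=0$; substituting the expansion and using disjointness leaves only the $\alpha_0$-term, giving $\int_{E}t_{\alpha_0}(\xi)\,d\mu_{\widehat{G}}(\xi)=0$, and shrinking $E$ yields $t_{\alpha_0}(\xi_0)=0$ a.e. The $\alpha=0$ term then identifies the symbol as $s$ of (3.2).

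The main obstacle I anticipate is the unfolding identity in the first step: pushing an integral of $\mathcal{F}^{-1}\Psi|_{\Gamma_j}$ against the character $\overline{\xi(\cdot)}$ into a periodization $\sum_{\alpha\in\Gamma_j^{\perp}}\Psi(\xi+\alpha)$ requires Plancherel on the closed, possibly non-discrete subgroup $\Gamma_j$ together with Weil's formula, and one must carefully track the normalisation constants between the Haar measures on $G$, $\Gamma_j$, and $\Gamma_j^{\perp}$. The dual $\alpha$-LIC is exactly the hypothesis that legitimises every interchange of $\sum_{j\in J}$, $\int_{P_j}$, $\sum_{\alpha\in\Gamma_j^{\perp}}$ and $\int_{\widehat{G}}$ along the way; without it the manipulation is only formal. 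Once this unfolding is in place, the equivalence and the symbol formula fall out from standard density and test-function arguments.
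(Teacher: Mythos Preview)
Your expansion of $\widehat{Wf}$ via Weil's formula is the right heuristic, and the implication (ii)$\Rightarrow$(i) together with the identification of the symbol can be made rigorous along the lines you sketch. The genuine gap is in your argument for (i)$\Rightarrow$(ii). To isolate the term indexed by a fixed $\alpha_0$ you need a Borel set $E$ with $E\cap(E+\gamma)=\emptyset$ for every nonzero $\gamma$ in the difference set $\bigl(\bigcup_j\Gamma_j^{\perp}\bigr)-\alpha_0$. Each $\Gamma_j^{\perp}$ is discrete (because $\Gamma_j$ is co-compact), but nothing in the hypotheses prevents the countable union $\bigcup_j\Gamma_j^{\perp}$ from accumulating at $0$, or even from being dense in $\widehat G$; already for $G=\mathbb R$ with $\Gamma_j=j\mathbb Z$, $j\ge 1$, one has $\bigcup_j\Gamma_j^{\perp}=\mathbb Q$. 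In such cases no nonempty open $E$ has the required separation property, and your shrinking argument cannot get off the ground. A secondary issue is that the dual $\alpha$-LIC, as stated, controls $\sum_j\int_{P_j}\sum_{\alpha}\int_{\widehat G}|\widehat f(\xi)\widehat f(\xi+\alpha)\cdots|$ for a \emph{single} $f$, so it does not directly justify the Fubini manipulations when you pair $Wf$ against a different test function $u$.

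The paper circumvents exactly this difficulty by never attempting to separate the frequencies $\alpha$ inside $\widehat G$. Instead it studies the scalar function $w_f(x)=\langle\Theta T_xf,T_xf\rangle$ on $G$. Under the dual $\alpha$-LIC this is continuous and equals an absolutely convergent \emph{almost periodic} Fourier series $\sum_{\alpha}\alpha(x)\,\widehat w_f(\alpha)$ with frequency set $\bigcup_j\Gamma_j^{\perp}$. Commutation of $\Theta$ with all $T_x$ is shown (via polarisation on the dense set $\mathfrak D$) to be equivalent to $w_f$ being constant in $x$, and the uniqueness theorem for Bohr--Fourier coefficients of almost periodic functions then forces $\widehat w_f(\alpha)=0$ for every $\alpha\neq 0$, regardless of how the $\alpha$'s are distributed in $\widehat G$. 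Only after this step does one unwind $\widehat w_f(\alpha)=0$ for all $f\in\mathfrak D$ into $t_\alpha\equiv 0$, now dealing with one $\alpha$ at a time and needing no simultaneous localisation. If you want to salvage your direct approach, the missing ingredient is precisely this almost-periodic uniqueness on the $G$ side in place of a spatial separation argument on $\widehat G$.
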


We first remark that the equations (\ref{eq 3.1})
and (\ref{eq 3.2}) are well
defined which can be easily verified by using Cauchy-Schwarz ineqality in the following computation:
 \begin{align*}
 \sum\limits_{j \in J: \alpha \in \Gamma_j^{\perp} } \int \limits_{P_j} |\overline{\widehat {h}_{j,p}(\xi)} \widehat{g}_{j,p}(\xi+  \alpha)|d{\mu}_{P_j}(p) &\leq 
 \sum\limits_{j \in J } \int \limits_{P_j} |{\widehat {h}_{j,p}(\xi)}| |\widehat{g}_{j,p}(\xi+  \alpha)|d{\mu}_{P_j}(p)\\
 &\leq 
 \sum\limits_{j \in J } \Big(\int \limits_{P_j} |{\widehat {h}_{j,p}(\xi)}|^2d{\mu}_{P_j}(p)\Big)^{1/2} \Big(\int \limits_{P_j}|\widehat{g}_{j,p}(\xi+  \alpha)|^2d{\mu}_{P_j}(p)\Big)^{1/2}\\
 &\leq 
 \Big(\sum\limits_{j \in J } \int \limits_{P_j} |{\widehat {h}_{j,p}(\xi)}|^2d{\mu}_{P_j}(p)\Big)^{1/2} \Big(\sum\limits_{j \in J }\int \limits_{P_j}|\widehat{g}_{j,p}(\xi+  \alpha)|^2d{\mu}_{P_j}(p)\Big)^{1/2},
 \end{align*}
 and hence,  we can write
 \begin{equation}\label{eq 4.3}
 \sum\limits_{j \in J: \alpha \in \Gamma_j^{\perp} } \int \limits_{P_j} |\overline{\widehat {h}_{j,p}(\xi)} \widehat{g}_{j,p}(\xi+  \alpha)|d{\mu}_{P_j}(p)
 \leq \beta ,\,\, ~\mbox{for a.e.}~\, \xi \in \widehat{G},
 \end{equation}
in view of  \cite[Proposition 3.3]{JL} and by letting $\beta$ as a common Bessel constant for the two GTI systems.
Now, in order to prove Theorem~ \ref{thm 3.1}, we need the following result:
   \begin {lemma}\label{le 3.2} Let    $\bigcup\limits_{j \in J}\{T_{\gamma}g_{j,p}\}_{\gamma \in \Gamma_j, p \in P_j}$ and $\bigcup\limits_{j \in J}\{T_{\gamma}h_{j,p}\}_{\gamma \in \Gamma_j, p \in P_j}$ satisfy the assumptions of Theorem~\ref{thm 3.1}, and  let the symbol $\Theta$  be their corresponding mixed dual Gramian operator. For $f \in \mathfrak{D}$, define the function $w_f: G \rightarrow \mathbb{C},\, x \mapsto \langle{\Theta T_xf, T_xf}\rangle$. Then, the following hold true:
    \begin{itemize}
    	\item [(i)] The operator $\Theta$ commutes with all translations  $T_{x}$ for $x \in G$, if, and only if,  $w_f$ is constant for all $f \in \mathfrak{D}$, that means,  $w_f(x)=w_f(0)=\langle{\Theta f, f}\rangle$, for all $x \in G$, where $0$ denotes the identity element of the LCA group $G$. 
    	\item [(ii)] Assume that, for $f \in \mathfrak{D}$, the $\alpha$-LIC holds. Then, the function $w_f(x)$ is a continuous function that coincides pointwise with its absolutely convergent $($almost periodic$)$ Fourier series 
    \begin{equation}\label{eq 3.3}
    \sum \limits_{\alpha \in \bigcup \limits_{j \in J} \Gamma_j^{\perp}}\alpha(x) \widehat{w}_f(\alpha),
    \end{equation} 
    where 
    \begin{equation}\label{eq 3.4}
    \widehat{w}_f(\alpha):=\int\limits_{\widehat{G}}\widehat{f}(\xi)\overline{\widehat{f}(\xi+ \alpha)}t_{\alpha}(\xi)d\mu_{\widehat{G}}(\xi),
    \end{equation} 
    converges absolutely.
    \item [(iii)] $ w_f$ is constant for all $f \in \mathfrak{D}$ if, and only if, for all $\alpha \in \bigcup\limits_{j \in J}\Gamma_j^{\perp}\setminus \{0\},\, t_{\alpha}(\xi)=0\, ~\mbox{a.e.}~\, \xi \in \widehat{G}.$
    \end{itemize}
\end{lemma}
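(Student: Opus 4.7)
The plan is to tackle the three parts in the order (ii), (i), (iii), since the Fourier expansion obtained in (ii) drives both of the other statements.

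For part (ii), I would write the mixed dual Gramian explicitly as
\[\Theta h = \sum_{j\in J}\int_{P_j}\int_{\Gamma_j}\langle h, T_\gamma g_{j,p}\rangle\, T_\gamma h_{j,p}\,d\mu_{\Gamma_j}(\gamma)\,d\mu_{P_j}(p),\]
and compute $w_f(x) = \langle\Theta T_x f, T_x f\rangle$ on the Fourier side. Using $\widehat{T_y f}(\xi)=\overline{\xi(y)}\widehat{f}(\xi)$ and Plancherel, each inner product $\langle T_x f, T_\gamma g_{j,p}\rangle$ becomes $\int_{\widehat{G}}\widehat{f}(\xi)\overline{\widehat{g}_{j,p}(\xi)}\,\xi(\gamma-x)\,d\mu_{\widehat{G}}(\xi)$. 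After a Fubini exchange (justified on $\mathfrak{D}$ by the dual $\alpha$-LIC together with $\widehat{f}\in L^\infty$ of compact support) and the fiberization identity that converts integration against characters on $\Gamma_j$ into summation over its annihilator $\Gamma_j^\perp$ (as employed in \cite{JL}), one obtains
\[w_f(x) = \sum_{\alpha\in\bigcup_j\Gamma_j^\perp}\alpha(x)\int_{\widehat{G}}\widehat{f}(\xi)\overline{\widehat{f}(\xi+\alpha)}\,t_\alpha(\xi)\,d\mu_{\widehat{G}}(\xi),\]
which is precisely \eqref{eq 3.3}--\eqref{eq 3.4}. Absolute convergence and continuity in $x$ follow from the pointwise bound \eqref{eq 4.3} and the fact that $\widehat{f}\in L^\infty$ has compact support, which majorise each term by a constant multiple of $\|\widehat{f}\|_\infty^2\,\mu_{\widehat{G}}(\operatorname{supp}\widehat{f})\,\beta$ independently of $\alpha$.

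For part (i), the forward implication is immediate: unitarity of $T_x$ gives $w_f(x) = \langle T_x\Theta f, T_x f\rangle = \langle\Theta f, f\rangle = w_f(0)$. For the converse, applying the constancy hypothesis to the four vectors $f + i^k g$, $k\in\{0,1,2,3\}$ (all of which lie in $\mathfrak{D}$, since $\mathfrak{D}$ is closed under complex-linear combinations), and invoking the complex polarization identity yields $\langle\Theta T_x f, T_x g\rangle = \langle\Theta f, g\rangle = \langle T_x\Theta f, T_x g\rangle$ for all $f,g\in\mathfrak{D}$. Thus $(\Theta T_x - T_x\Theta)f$ is orthogonal to every element of $T_x\mathfrak{D}$, and density of $\mathfrak{D}$ in $L^2(G)$ then forces $\Theta T_x = T_x\Theta$.

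For part (iii), if $t_\alpha\equiv 0$ for each $\alpha\in\bigcup_j\Gamma_j^\perp\setminus\{0\}$, only the $\alpha=0$ summand in \eqref{eq 3.3} survives and it does not depend on $x$, so $w_f$ is constant. Conversely, if $w_f$ is constant for every $f\in\mathfrak{D}$, uniqueness of the absolutely convergent almost-periodic Fourier expansion forces $\widehat{w}_f(\alpha)=0$ for every $\alpha\neq 0$ and every $f\in\mathfrak{D}$. Polarizing the sesquilinear form $(f,g)\mapsto \int_{\widehat{G}}\widehat{f}(\xi)\overline{\widehat{g}(\xi+\alpha)}\,t_\alpha(\xi)\,d\mu_{\widehat{G}}(\xi)$ on $\mathfrak{D}\times\mathfrak{D}$ upgrades this to
\[\int_{\widehat{G}}\widehat{f}(\xi)\overline{\widehat{g}(\xi+\alpha)}\,t_\alpha(\xi)\,d\mu_{\widehat{G}}(\xi)=0 \quad\text{for all }f,g\in\mathfrak{D}.\]
Taking $\widehat{f}=\mathbf{1}_E$ and $\widehat{g}=\mathbf{1}_{E-\alpha}$ for an arbitrary relatively compact Borel set $E\subset\widehat{G}\setminus(B\cup(B+\alpha))$ produces $\int_E t_\alpha\,d\mu_{\widehat{G}}=0$, and a routine Lebesgue differentiation argument then gives $t_\alpha(\xi)=0$ for a.e.\ $\xi\in\widehat{G}$. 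The hardest step is the fiberization in (ii): one must rigorously commute the sum over $j$ with the integrals over $P_j$, $\Gamma_j$, and $\widehat{G}$ while keeping track of why only contributions with $\xi-\zeta\in\Gamma_j^\perp$ survive, and this is exactly where the dual $\alpha$-LIC is indispensable.
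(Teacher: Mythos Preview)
Your overall strategy is sound and aligns with the paper's proof, which likewise defers the computation in (ii) to the argument of \cite[Theorem~3.4]{JL}, uses polarization for the converse in (i), and invokes uniqueness of almost-periodic Fourier coefficients for (iii). Two points deserve attention.

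First, your justification of absolute convergence in (ii) does not work as stated. The pointwise bound \eqref{eq 4.3} gives only $|t_\alpha(\xi)|\le\beta$ uniformly in $\alpha$, hence $|\widehat{w}_f(\alpha)|\le\beta\|\widehat{f}\|_\infty^2\,\mu_{\widehat{G}}(\operatorname{supp}\widehat{f})$ for \emph{each} $\alpha$; a uniform bound on the terms says nothing about summability of $\sum_\alpha|\widehat{w}_f(\alpha)|$ when $\bigcup_j\Gamma_j^\perp$ is infinite. Absolute convergence must instead be read off directly from the dual $\alpha$-LIC \eqref{eq 3.1111}, which you already invoked for the Fubini step: that hypothesis is precisely a joint summability/integrability condition over $j$, $P_j$, $\alpha\in\Gamma_j^\perp$, and $\widehat{G}$, and it majorises $\sum_\alpha|\widehat{w}_f(\alpha)|$ once you regroup the absolutely convergent iterated sum. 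This is an easy fix, but the argument as written is a genuine gap.

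Second, in the forward direction of (iii) you and the paper take slightly different routes after obtaining $\widehat{w}_f(\alpha)=0$ for all $f\in\mathfrak{D}$ and $\alpha\neq 0$. The paper rewrites this as $\langle\widehat{f},M_{\overline{t}_\alpha}T_\alpha\widehat{f}\rangle_{L^2(\widehat{G})}=0$, notes that $M_{\overline{t}_\alpha}T_\alpha$ is a bounded operator (since $t_\alpha\in L^\infty(\widehat{G})$ by \eqref{eq 4.3}), and concludes via density of $\mathfrak{D}$ and complex polarization that $M_{\overline{t}_\alpha}T_\alpha=0$, whence $t_\alpha=0$ a.e. Your route---polarizing first, then testing against characteristic functions $\mathbf{1}_E$, $\mathbf{1}_{E-\alpha}$ and appealing to Lebesgue differentiation---is valid and reaches the same conclusion; the paper's operator-theoretic argument is a bit cleaner in that it sidesteps any bookkeeping with the exceptional set $B$ and does not require a differentiation theorem on the abstract group $\widehat{G}$.
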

  \begin{proof} 
  (i) Let $\Theta T_{x}=T_{x} \Theta$, for all $x \in G$. Then, the direct part of (i) can be concluded by observing that  
  \begin{align*} 
  	w_f(x)=  \langle{\Theta T_xf, T_xf}\rangle=\langle{ T_x\Theta f, T_xf}\rangle=\langle{\Theta f, T_x^{\ast}T_xf}\rangle=\langle{\Theta f,f}\rangle,
  	\end{align*}
  for all $x \in G$ and $f \in \mathfrak{D}$, since for each $x$, $T_{x}$ is an unitary operator.
  
   Conversely, let $w_f$ be constant for all $f \in \mathfrak{D}$. Then, for all $x \in G$, 
  	\begin{align*}
  	w_f(x)=  \langle{\Theta T_xf, T_xf}\rangle=\langle{ T_{-x}\Theta T_x f, f}\rangle=\langle{\Theta f, f}\rangle, 
  	\end{align*}
  	which by using unitary nature of $T_x$ for each $x$ and polarization identity, leads to  $T_{-x}\Theta T_x=\Theta$, and hence, we get $\Theta T_x=T_x \Theta$.
  	
  	(ii) For each $f \in \mathfrak{D}$ and $x \in G$, we can write the function  \begin{align*}
  	w_f(x)= \langle{\Theta T_xf, T_xf}\rangle &=
  	\Big\langle{\sum \limits_{j \in J}\int \limits_ {p \in P_j} \int \limits_{\Gamma_j}\big\langle{T_xf,T_{\gamma}h_{j,p}}\big \rangle T_{\gamma}g_{j,p} d\mu_{\Gamma_j}(\gamma) d\mu_{P_j}(p), T_xf }\Big\rangle\\
  	&=
  	\sum \limits_{j \in J}\int \limits_ {p \in P_j} \int \limits_{\Gamma_j}\big\langle{T_xf,T_{\gamma}h_{j,p}}\big \rangle\big\langle{T_{\gamma}g_{j,p},T_xf} \big\rangle d\mu_{\Gamma_j}(\gamma) d\mu_{P_j}(p).\tag{*}
  	\end{align*}
  	 Now, by proceeding in the same way as in the proof of \cite[Theorem 3.4]{JL}, the result follows.
  	 
  	(iii)
  	Let us assume that $\alpha$-LIC holds for all $f \in \mathfrak{D}$. From (\ref{eq 3.3}) and (*), it follows that 
  	\begin{equation}\label{eq 3.5}
  	w_f(x)= \sum \limits_{j \in J}\int \limits_ {p \in P_j} \int \limits_{\Gamma_j}\big\langle{T_xf,T_{\gamma}h_{j,p}}\big \rangle\big\langle{T_{\gamma}g_{j,p},T_xf} \big\rangle d\mu_{\Gamma_j}(\gamma) d\mu_{P_j}(p)
  	= \sum \limits_{\alpha \in \bigcup \limits_{j \in J} \Gamma_j^{\perp}}\alpha(x) \widehat{w}(\alpha).
  	\end{equation}
  	Consider now the function $z_f(x):= w_f(x)-\langle{\Theta f, f}\rangle$ which is continuous in view of continuity of the function $w_f$.  
  	
  	Now, for the direct part, assume that the function $w_f$ is constant for all $f \in \mathfrak{D}$. We claim that  $t_{\alpha}(\xi)=0$, for all $\alpha \in \bigcup\limits_{j \in J}\Gamma_j^{\perp}\setminus \{0\}$ and $a.e.$ $\xi \in \widehat{G}$. Here, note that by the construction $z_f$ is identical to the zero function. Additionally, since $w_f$ equals an absolute convergent, generalized Fourier series, also $z_f$ can be expressed as an absolute convergent generalized Fourier series  $z_f(x)=\sum \limits_{\alpha \in \bigcup \limits_{j \in J} \Gamma_j^{\perp}}\alpha(x) \widehat{z}_f(\alpha)$, with 
  	\begin{equation*}\label{eq 3.6}
  	\widehat{z}_f(\alpha) = \begin{cases} \widehat{w}_f(0)-\langle{\Theta f, f}\rangle, & \mbox{if } \alpha=0,  \\ \widehat{w}_f(\alpha), & \mbox{if } \alpha \neq 0. \end{cases}
  	\end{equation*}
  	By the uniqueness theorem for generalized Fourier series \cite[Theorem 7.12]{CC}, the function $z_f(x)$ is identical to zero if, and only, if  $\widehat{z}_f(\alpha)=0$ for all $\alpha \in \bigcup \limits_{j \in J} \Gamma_j^{\perp}.$
 
  	Thus, for $\alpha \in \bigcup \limits_{j \in J} \Gamma_j^{\perp}$ and $f \in \mathfrak{D}$, we have 
  \begin{equation}\label{eq 3.7}
  \widehat{w}_f(\alpha)=\delta_{\alpha,0} \langle{\Theta f,f}\rangle.
  \end{equation} 
  Let $\alpha \neq 0$. Then, for all $f \in \mathfrak{D}$, (\ref{eq 3.7}) reduces to $\widehat{w}_f(\alpha)=0$, and hence, we get 
  \begin{equation}\label{eq 3.8}
  \int \limits_{\widehat{G}} \widehat{f}(\xi)\overline{\widehat{f}(\xi + \alpha)} t_{\alpha}(\xi)d\mu_{\widehat{G}}(\xi)=0,\, ~\mbox{for a.e.}~\, \xi \in \widehat{G}. 
  \end{equation}
  Now, define the multiplication operator 
  $M_{\overline{t}_{\alpha}}: L^2(\widehat{G}) \rightarrow L^2(\widehat{G})$ by  $M_{\overline{t}_{\alpha}}\widehat{f}(\xi)= \overline{t_{\alpha}(\xi)} \widehat{f}(\xi)$ which is a bounded linear operator  in view of  the fact that $t_{\alpha}(\xi) \in L^{\infty}(\widehat{G})$  (for details, see  (\ref{eq 4.3})). For all $f \in \mathfrak{D}$ and a.e. $\xi \in \widehat{G} $, we can now rewrite the term in left hand side of (\ref{eq 3.8}) as 
  \begin{align*}
  \int \limits_{\widehat{G}} \widehat{f}(\xi)\overline{\overline{t_{\alpha}(\xi)} T_{\alpha}\widehat{f}(\xi) }d\mu_{\widehat{G}}(\xi)
  &= \int \limits_{\widehat{G}} \widehat{f}(\xi)\overline{M_{\overline{t}_{\alpha}} (T_{\alpha}\widehat{f})(\xi)} d\mu_{\widehat{G}}(\xi)
  = \int \limits_{\widehat{G}} \widehat{f}(\xi)\overline{(M_{\overline{t}_{\alpha}} T_{\alpha})\widehat{f}(\xi)} d\mu_{\widehat{G}}(\xi)\\
  &= {\big\langle{\widehat{f}, M_{\overline{t}_{\alpha}} T_{\alpha}\widehat{f}\,
  }\big\rangle}_{L^2(\widehat{G})}, 
  \end{align*}
   which is equal to zero in view of (\ref{eq 3.8}). From the above equality and the fact that $\mathfrak{D}$ is dense in the complex Hilbert space $L^2(G)$, it follows that 
   $M_{\overline{t}_{\alpha}} T_{\alpha}\widehat{f}=0$, which is if, and only if, $M_{\overline{t}_{\alpha}}T_{\alpha}
   =0$, that means,  $M_{\overline{t}_{\alpha}}T_{\alpha}(\widehat{g})=0$ for all $\widehat{g} \in L^2(\widehat{G})$ , and hence,  $M_{\overline{t}_{\alpha}}T_{\alpha}\widehat{g}(\xi)=\overline{t_{\alpha}(\xi)} T_{\alpha}\widehat{g}(\xi)=0$ for all $\widehat{g} \in L^2(\widehat{G})$ and a.e. $\xi \in \widehat{G}$. Thus, (\ref {eq 3.8}) holds if, and only if, for a.e. $\xi \in \widehat{G}$, we have $t_{\alpha}(\xi)=0$, for all $\alpha \in \bigcup\limits_{j \in J}\Gamma_j^{\perp}\setminus \{0\}$.
   
   Conversely, for each  $\alpha \in \bigcup\limits_{j \in J}\Gamma_j^{\perp}\setminus \{0\}$, let $t_{\alpha}(\xi)=0$ for a.e. $\xi \in \widehat{G}$, which implies that $\widehat{w}_f(\alpha)=0$, and by using this in (\ref{eq 3.5}) along with the fact from
   (\ref{eq 3.7}) that for $\alpha=0,$  we have $\widehat{w}_f(\alpha)=\langle{\Theta f,f}\rangle$, and hence,  
   \begin{equation*}\label{eq 3.9}
   w_f(x)= \sum \limits_{\alpha \,\in \bigcup\limits_{j \in J}\Gamma_j^{\perp}\setminus \{0\}}\alpha(x)\widehat{w}_f(\alpha) + \sum \limits_{\alpha\in\{0\}}\alpha(x)\widehat{w}_f(\alpha) = 0+ \widehat{w}_f(0)= \langle{\Theta f, f}\rangle, 
   \end{equation*}
 for a.e. $x \in G$. Therefore, $w_f$ is constant for all $f \in \mathfrak{D}$.
  \end{proof}  

   \begin{proof}[Proof of  Theorem~\ref{thm 3.1}]
   Clearly, part (i) is true if, and only if, (\ref{eq 3.1}) holds in view of Lemma~\ref{le 3.2}. Further, it is well known that if the mixed dual Gramian operator, say $\Theta$, commutes with $T_{x}$ for all $ x \in G$, then it is a Fourier multiplier (see \cite[Theorem 4.1.1]{L}), and hence there exists a unique $s \in L^{\infty}(\widehat{G})$ such that  $\widehat{\Theta f}(\xi)= s(\xi)\widehat{f}(\xi)$, where $s(\xi)$ represents the symbol corresponding to $\Theta$. Now, for a.e. $\xi \in \widehat{G}$, we are interested in finding the expression for $s(\xi)$. For this, observe that 
   \begin{equation}\label{eq 3.10}
   \langle{\Theta f, f}\rangle={\langle{\widehat{\Theta f}, \widehat{f}}\rangle}_{L^2(\widehat{G})}= \int \limits_{\widehat{G}} \widehat{\Theta f}(\xi)\overline{\widehat{f}(\xi)}d \mu_{\widehat{G}}(\xi)=  \int \limits_{\widehat{G}}  s(\xi)\widehat{f}(\xi)  \overline{\widehat{f}(\xi)}d \mu_{\widehat{G}}(\xi).
   \end{equation}
   Moreover, for $\alpha=0$, it follows from (\ref{eq 3.4}) and (\ref{eq 3.7}) that for all $f \in \mathfrak{D}$, 
    \begin{equation}\label{eq 3.11}
   \langle{\Theta f, f}\rangle= \widehat{w}_f(0)=\int \limits_{\widehat{G}} \widehat{f}(\xi) \overline{\widehat{f}(\xi)} \sum \limits_{j \in J} \int \limits_{P_j}\overline{\widehat{h}_{j,p}(\xi)}\widehat{g}_{j,p}(\xi)d \mu_{P_j}(p)d \mu_{\widehat{G}}(\xi).
   \end{equation}
   Therefore, since (\ref{eq 3.10}) and 
   (\ref{eq 3.11}) are valid for all $f \in \mathfrak{D}$ and $s$ is unique, it is clear that the symbol of $\Theta$, that is,  $s(\xi)=\sum \limits_{j \in J} \int \limits_{P_j}\overline{\widehat{h}_{j,p}(\xi)}\widehat{g}_{j,p}(\xi)d \mu_{P_j}(p).$
  \end{proof}
   Now, we are ready to prove our first main result, that is, Theorem~\ref{thm 2.1}, which is as follows:
   
  \begin{proof}[Proof of  Theorem~\ref{thm 2.1}] By Definition~\ref{de 1.4}, the part (i) is equivalent in saying that the mixed dual Gramian operator corresponding to the GTI-systems  $\bigcup\limits_{j \in J}\{T_{\gamma}g_p\}_{\gamma \in \Gamma_j, p \in P_j}$ and $\bigcup\limits_{j \in J}\{T_{\gamma}h_p\}_{\gamma \in \Gamma_j, p \in P_j}$, say $\Theta$, is equal to zero.
    Next, we claim that $\Theta=0$ if, and only if, $\Theta$ commutes with the translations $T_{x}$ for all $x \in G$, and, act as a Fourier multiplier with symbol  \begin{align*}
    s(\xi) =\sum\limits_{j \in J} \int \limits_{P_j} \overline{\widehat {h}_{j,p}(\xi)} \widehat{g}_{j,p}(\xi)d{\mu}_{P_j}(p)=0,\, ~\mbox{for a.e.}~\, \xi \in \widehat{G}. \end{align*}
     For proving the above claim, let $\Theta=0$. Then, $\Theta T_{x}(f)=0$, for all $x \in G$ and $f \in L^2(G)$. Since for each $x$, translation $T_{x}$ is a linear operator, therefore $T_{x}(0)=~\mbox{zero of}~ L^2(G)=0$, and hence,  $T_{x}\Theta f=T_{x}(0)=0$, which implies that $\Theta T_{x}= T_{x}\Theta$ for all $x \in G$. Thus by  \cite[Theorem 4.1.1]{L}, $\Theta$ is a Fourier multiplier. So for all $f \in L^2(G)$ we have  $0=\widehat{\Theta f}(\xi)= s(\xi)\widehat{f}(\xi)$,   $\xi \in \widehat{G}$ a.e., where $s(\xi)$, the symbol of $\Theta$ as a Fourier multiplier, is given by (\ref{eq 3.2}). 
     
     Conversely, if $\Theta$ is a Fourier multiplier with symbol $s(\xi)=0$, then $ \widehat{\Theta f}(\xi)=0$, which implies that $\Theta f=0$ for all $f \in L^2(G)$, and hence, $\Theta=0.$ Now, the result follows by considering the above claim along with Theorem~\ref{thm 3.1}.
     \end{proof}
     
  
  \section{ A characterization result for duals of super GTI-systems }\label{sec 5}
  The content in this section centers around the proof of our second main result, that is, Theorem~\ref{thm 4.51}. For this, we need to prove the following  result which is a continuous version of \cite[Theorem 3]{A}. 
  
  \begin{lemma}\label{prop 5.1}
  Let $P$ be an orthogonal projection from a complex $($separable$)$ Hilbert space $\mathcal{H}$ onto a closed subspace $\mathcal{H}_1$ in $\mathcal{H}$, and let $\{X_j\}_{j \in \mathbb{J}}$ and $\{Y_j\}_{j \in \mathbb{J}}$ be Bessel families in $\mathcal{H}$, where $($$\mathbb{J}$, $\sum_{\mathbb{J}}$, $\mu_{\mathbb{J}}$$)$ denotes a measure space with $\sum_{\mathbb{J}}$ as the $\sigma$-algebra  and $\mu_{\mathbb{J}}$ as the non-negative measure.  Then, the following assertions are true:
  \begin{itemize}
  \item [(i)] If $\{X_j\}_{j \in \mathbb{J}}$ is a continuous frame for $\mathcal{H}$, then $\{PX_j\}_{j \in \mathbb{J}}$ is a continuous frame for $\mathcal{H}_1$ with same frame bounds.
   \item [(ii)] If $\{X_j\}_{j \in \mathbb{J}}$ and $\{Y_j\}_{j \in \mathbb{J}}$ are dual frames for $\mathcal{H}$, then $\{PX_j\}_{j \in \mathbb{J}}$ and $\{PY_j\}_{j \in \mathbb{J}}$ are dual frames for $\mathcal{H}_1$.
   \end{itemize}
  \end{lemma}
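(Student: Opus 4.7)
The plan is to exploit two basic properties of the orthogonal projection $P$: it is self-adjoint ($P^* = P$) and it acts as the identity on its range ($Ph = h$ for every $h \in \mathcal{H}_1$). These two properties together will let me replace $PX_j$ by $X_j$ (and similarly for $Y_j$) in every inner product in which the other slot is a vector of $\mathcal{H}_1$, after which the statements for $\mathcal{H}_1$ will reduce immediately to the corresponding statements for $\mathcal{H}$.

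More precisely, for part (i) I would first note weak measurability of $j \mapsto PX_j$: since $P$ is bounded linear, for any $h \in \mathcal{H}_1 \subseteq \mathcal{H}$ the map $j \mapsto \langle h, PX_j \rangle = \langle Ph, X_j \rangle = \langle h, X_j \rangle$ is measurable because $\{X_j\}_{j \in \mathbb{J}}$ is weakly measurable in $\mathcal{H}$. Then for $h \in \mathcal{H}_1$, applying $P^* = P$ and $Ph = h$ gives
\begin{equation*}
\int_{\mathbb{J}} |\langle h, PX_j \rangle|^2 \, d\mu_{\mathbb{J}}(j) = \int_{\mathbb{J}} |\langle Ph, X_j \rangle|^2 \, d\mu_{\mathbb{J}}(j) = \int_{\mathbb{J}} |\langle h, X_j \rangle|^2 \, d\mu_{\mathbb{J}}(j),
\end{equation*}
and since $\{X_j\}_{j \in \mathbb{J}}$ is a continuous frame for $\mathcal{H}$ with bounds $\alpha_1, \alpha_2$, the same bounds apply to $h \in \mathcal{H}_1$. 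This shows $\{PX_j\}_{j \in \mathbb{J}}$ is a continuous frame for $\mathcal{H}_1$ with the same frame bounds.

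For part (ii), it suffices to verify the reproducing identity of Definition 1.2 for $\{PX_j\}$ and $\{PY_j\}$ on $\mathcal{H}_1$. For any $h_1, h_2 \in \mathcal{H}_1$, using $P = P^*$ and $Ph_1 = h_1$, $Ph_2 = h_2$, I get
\begin{equation*}
\langle h_1, PX_j \rangle \langle PY_j, h_2 \rangle = \langle Ph_1, X_j \rangle \langle Y_j, Ph_2 \rangle = \langle h_1, X_j \rangle \langle Y_j, h_2 \rangle,
\end{equation*}
so integration against $\mu_{\mathbb{J}}$ and the dual frame identity for $\{X_j\}, \{Y_j\}$ in $\mathcal{H}$ yield $\int_{\mathbb{J}} \langle h_1, PX_j \rangle \langle PY_j, h_2 \rangle \, d\mu_{\mathbb{J}}(j) = \langle h_1, h_2 \rangle$. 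Part (i) already guarantees the Bessel/frame property for $\{PX_j\}$ (and the same argument applied to $\{Y_j\}$ handles $\{PY_j\}$), so the pair $(\{PX_j\}, \{PY_j\})$ is indeed a dual frame pair in $\mathcal{H}_1$.

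There is no substantial obstacle here; the argument is a direct reduction via self-adjointness and invariance of $\mathcal{H}_1$ under $P$. The only minor technical point worth being careful about is the weak measurability of the projected families in $\mathcal{H}_1$, which follows immediately from boundedness of $P$ together with the assumption (stated earlier in the paper via Pettis' theorem) that weak measurability is equivalent to Borel measurability in the separable setting.
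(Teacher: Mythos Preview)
Your proof is correct and follows essentially the same approach as the paper: both exploit $P^*=P$ and $Ph=h$ for $h\in\mathcal{H}_1$ to reduce the projected inner products to the original ones. For part~(ii) your argument is in fact slightly cleaner than the paper's, since you verify the dual-frame identity directly in its weak (bilinear) form, whereas the paper writes $h=P\bigl(\int\langle h,Y_j\rangle X_j\,d\mu_{\mathbb J}\bigr)$ and then needs an extra remark to justify commuting the bounded operator $P$ with the weak integral.
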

  
  \begin{proof}
 For the part (i), let $\{X_j\}_{j \in \mathbb{J}}$ be a  continuous frame for $\mathcal{H}$ with frame bounds $0 < \alpha_1 \leq \alpha_2$. 
  We claim that $\{PX_j\}_{j \in \mathbb{J}}$ is a frame for $\mathcal{H}_1$. To conclude this claim, note that $j \mapsto PX_j$ is weakly measurable, that is, for all $h \in \mathcal{H}_1$, the mapping $\mathbb{J} \mapsto \mathbb{C};\,j \mapsto \langle{h, PX_j}\rangle$ is measurable. Now, to check the frame condition for an arbitrary element $h \in \mathcal{H}_1$, we use the properties of orthogonal projection $P$ such as $P^{\ast}=P$ and ${||Ph||}^2={||h||}^2$ in the left inequality of (\ref{eq 1.1}) with the frame bound $\alpha_1$ to obtain
  \begin{align*}
  \alpha_1||h||^2 = \alpha_1||Ph||^2\leq
  \int \limits_{\mathbb{J}}|\langle{Ph, X_j}\rangle|^2d\mu_{\mathbb{J}}(j)
   =\int \limits_{\mathbb{J}}|\langle{h, P^{\ast}X_j}\rangle|^2d\mu_{\mathbb{J}}(j)=\int \limits_{\mathbb{J}}|\langle{h, PX_j}\rangle|^2d\mu_{\mathbb{J}}(j),
  \end{align*}
   along with a similar estimate with the frame bound $\alpha_2$ which yields 
  \begin{align*}
  \int \limits_{\mathbb{J}}|\langle{h, PX_j}\rangle|^2d\mu_{\mathbb{J}}(j)=\int \limits_{\mathbb{J}}|\langle{P^{\ast}h, X_j}\rangle|^2d\mu_{\mathbb{J}}(j)\leq \alpha_2||P^{\ast}h||^2=\alpha_2||Ph||^2=\alpha_2||h||^2.
  \end{align*}
  
 To prove the part (ii), let $\{X_j\}_{j \in \mathbb{J}}$ and $\{Y_j\}_{j \in \mathbb{J}}$ be dual frames for $\mathcal{H}$. Our  claim is to show that $\{PX_j\}_{j \in \mathbb{J}}$ and $\{PY_j\}_{j \in \mathbb{J}}$ are dual frames for $\mathcal{H}_1$. For this, we simply write any arbitrary $h \in \mathcal{H}_1$ in terms of the continuous frame $\{X_j\}_{j \in \mathbb{J}}$ of $\mathcal{H}$, and use the  commutativity of $P$ with the integral over a general measure space  $($$\mathbb{J}$, $\sum_{\mathbb{J}}$, $\mu_{\mathbb{J}}$$)$ in the following computation:
  \begin{align*}
  h=Ph&=P\Big(\int \limits_{\mathbb{J}}\langle{h, Y_j}\rangle X_jd\mu_{\mathbb{J}}(j)\Big)=P\Big(\int \limits_{\mathbb{J}}\langle{Ph, Y_j}\rangle X_jd\mu_{\mathbb{J}}(j)\Big)\\
  &=\int \limits_{\mathbb{J}}\langle{h, P^{\ast}Y_j}\rangle PX_jd\mu_{\mathbb{J}}(j)= \int \limits_{\mathbb{J}}\langle{h, PY_j}\rangle PX_jd\mu_{\mathbb{J}}(j),
  \end{align*}
  where the interchange of $P$ with the integral is guaranteed by the fact that projections are closed and  the mapping $j \mapsto X_j$ is weakly measurable.
  \end{proof}
  The next result is a continuous version of  \cite[Theorem 7]{Ba} which plays a significant role in this sequel:
  \begin{theorem}\label{thm 4.1}
   Let $\mathcal{H}_n$ be a complex $($separable$)$ Hilbert space for $n=1,2,\ldots,N$, and let $\{x^{(n)}_j\}_{j \in \mathbb{J}}$ and $\{y^{(n)}_j\}_{j \in \mathbb{J}}$  be  Bessel families in $\mathcal{H}_n$ for each $n$, where $($$\mathbb{J}$, $\sum_{\mathbb{J}}$, $\mu_{\mathbb{J}}$$)$ denotes a measure space with $\sum_{\mathbb{J}}$ as the $\sigma$-algebra  and $\mu_{\mathbb{J}}$ as the non-negative measure. Then, the families $\big\{ \bigoplus \limits_{n=1}^{N} x^{(n)}_j\big\}_{j \in \mathbb{J}}:=\{x^{(1)}_j \oplus \cdots \oplus x^{(N)}_j\}_{j \in \mathbb{J}}$ and $\big\{ \bigoplus \limits_{n=1}^{N} y^{(n)}_j\big\}_{j \in \mathbb{J}}:=\{y^{(1)}_j \oplus \cdots \oplus y^{(N)}_j\}_{j \in \mathbb{J}}$ are dual frames for $\bigoplus \limits_{n=1}^{N} \mathcal{H}_n:=\mathcal{H}_1 \oplus \cdots \oplus \mathcal{H}_n$ if, and only if, both of the following two conditions hold:
  	\begin{itemize}
  		\item [(i)] for each $n$, $\{x^{(n)}_j\}_{j \in \mathbb{J}}$ and $\{y^{(n)}_j\}_{j \in \mathbb{J}}$ are dual frames for $\mathcal{H}_n$,
  		\item [(ii)]  for $n_1,n_2=1,2,\ldots,N$ with $n_1 \neq n_2$, $\{x^{(n)}_j\}_{j \in \mathbb{J}}$ and $\{y^{(n)}_j\}_{j \in \mathbb{J}}$
  		are orthogonal frames.
  	\end{itemize} 
  	In particular, for each  $1\leq n \leq  N$ and $j \in J$,  by using  $x^{(n)}_j=y^{(n)}_j$  in the above result, we get a characterization of $\{x^{(n)}_j\}_{j \in \mathbb{J}}$ such that  $\big\{ \bigoplus \limits_{n=1}^{N} x^{(n)}_j\big\}_{j \in \mathbb{J}}$ forms a Parseval frame for $\bigoplus \limits_{n=1}^{N} \mathcal{H}_n$.
  \end{theorem}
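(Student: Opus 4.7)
The plan is to use the natural orthogonal decomposition $\bigoplus_{m=1}^{N} \mathcal{H}_m = \bigoplus_{m=1}^{N} \iota_m(\mathcal{H}_m)$, where $\iota_n : h \mapsto (0,\ldots,0,h,0,\ldots,0)$ embeds $\mathcal{H}_n$ as the $n$-th summand and $P_n := \iota_n\iota_n^{\ast}$ is the corresponding orthogonal projection onto $\iota_n(\mathcal{H}_n)$. Together with the identity $\langle \bigoplus_m f^{(m)}, \bigoplus_m z^{(m)} \rangle = \sum_m \langle f^{(m)}, z^{(m)} \rangle$, this reduces the super-frame reproducing integral to a finite double sum over component indices $(n_1, n_2) \in \{1, \ldots, N\}^2$, and the whole argument amounts to picking probe vectors supported in single summands.

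For the forward direction, assume $\bigl\{\bigoplus_m x^{(m)}_j\bigr\}_{j \in \mathbb{J}}$ and $\bigl\{\bigoplus_m y^{(m)}_j\bigr\}_{j \in \mathbb{J}}$ are dual frames in $\bigoplus_m \mathcal{H}_m$. Since each is then a frame, Lemma~\ref{prop 5.1}(i) applied with $P_n$ shows that $\{P_n(\bigoplus_m x^{(m)}_j)\}_j = \{\iota_n(x^{(n)}_j)\}_j$ is a frame for $\iota_n(\mathcal{H}_n)$, and hence $\{x^{(n)}_j\}_j$ is a frame for $\mathcal{H}_n$ under the isometry $\iota_n$; likewise for $\{y^{(n)}_j\}_j$. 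To extract (i) and (ii) at once, test the reproducing formula (\ref{eq 1.2}) for the super-system with $\mathbf{f} = \iota_{n_1}(f)$ and $\tilde{\mathbf{f}} = \iota_{n_2}(\tilde f)$, where $f \in \mathcal{H}_{n_1}$, $\tilde f \in \mathcal{H}_{n_2}$. The left-hand side collapses to $\delta_{n_1,n_2}\langle f, \tilde f \rangle$ while the integrand on the right reduces to $\langle f, y^{(n_1)}_j \rangle \langle x^{(n_2)}_j, \tilde f \rangle$. The case $n_1 = n_2 = n$ is exactly the dual-frame identity in $\mathcal{H}_n$, giving (i); the case $n_1 \neq n_2$ forces $\int_{\mathbb{J}} \langle f, y^{(n_1)}_j\rangle \langle x^{(n_2)}_j, \tilde f \rangle \, d\mu_{\mathbb{J}}(j) = 0$ for all such $f, \tilde f$, which is the orthogonality of $\{y^{(n_1)}_j\}$ and $\{x^{(n_2)}_j\}$ in the sense of Definition~\ref{de 1.4}, establishing (ii).

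For the converse, assume (i) and (ii) and fix arbitrary $\mathbf{f}, \tilde{\mathbf{f}} \in \bigoplus_m \mathcal{H}_m$. Expand the super mixed-Gramian integrand bilinearly to obtain
\[
\int_{\mathbb{J}} \Bigl\langle \mathbf{f}, \bigoplus_m y^{(m)}_j \Bigr\rangle \Bigl\langle \bigoplus_m x^{(m)}_j, \tilde{\mathbf{f}} \Bigr\rangle \, d\mu_{\mathbb{J}}(j) = \sum_{n_1, n_2 = 1}^{N} \int_{\mathbb{J}} \langle f^{(n_1)}, y^{(n_1)}_j \rangle \langle x^{(n_2)}_j, \tilde f^{(n_2)} \rangle \, d\mu_{\mathbb{J}}(j),
\]
where the exchange of the finite outer sum with the integral is immediate from the Bessel hypotheses on each component family and Cauchy-Schwarz. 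By (i), the diagonal terms ($n_1 = n_2 = n$) contribute $\langle f^{(n)}, \tilde f^{(n)} \rangle$; by (ii), the off-diagonal terms vanish. Summing in $n$ yields $\langle \mathbf{f}, \tilde{\mathbf{f}} \rangle$, which is exactly (\ref{eq 1.2}) for the super-system.

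The main obstacle, if any, is purely notational: keeping track of the isometric identification between $\mathcal{H}_n$ and its image $\iota_n(\mathcal{H}_n)$ inside the super-space, and remembering that the orthogonality required in (ii) is the mixed-Gramian condition of Definition~\ref{de 1.4} applied to $\{y^{(n_1)}_j\}$ and $\{x^{(n_2)}_j\}$ for $n_1 \neq n_2$, rather than any pointwise orthogonality of vectors. No serious analytic difficulty arises since the only measure-theoretic interchange is against a finite outer sum over $N$ components.
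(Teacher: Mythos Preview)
Your proposal is correct and follows essentially the same route as the paper: both use the orthogonal projections $P_n$ onto the summands, derive (i) and (ii) in the forward direction by plugging vectors supported in single components into the reproducing identity (the paper phrases this in operator form via $P_{n_1}P_{n_2}=0$, and also invokes Lemma~\ref{prop 5.1}(ii) to get (i) directly, whereas you get (i) by the same testing that yields (ii)), and obtain the converse by expanding the super mixed-Gramian into the $(n_1,n_2)$ double sum. The only point you glossed over that the paper makes explicit is the one-line verification that the super-families inherit the Bessel bound $\sum_n \alpha_2^{(n)}$ from their components, which is needed for the pair to qualify as dual frames in the sense of Definition~1.2.
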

  
   \begin{proof}
   	 For each coordinate $n$ of $\mathcal{H}_1\oplus \cdots \oplus\mathcal{H}_N$, let by $P_n$, we denote the orthogonal projection onto $0\oplus \cdots \oplus \mathcal{H}_n \oplus \cdots \oplus 0$. Let the families $\big\{ \bigoplus \limits_{n=1}^{N} x^{(n)}_j\big\}_{j \in \mathbb{J}}$ and  $\big\{ \bigoplus \limits_{n=1}^{N} y^{(n)}_j\big\}_{j \in \mathbb{J}}$ be dual frames for $\bigoplus \limits_{n=1}^{N} \mathcal{H}_n$. Then, Lemma~\ref{prop 5.1} says that for each $n$, $\{x^{(n)}_j\}_{j \in \mathbb{J}}= P_n\Big(\big\{ \bigoplus \limits_{n=1}^{N} x^{(n)}_j\big\}_{j \in \mathbb{J}}\Big)$ and  
   	$\{y^{(n)}_j\}_{j \in \mathbb{J}}= P_n\Big(\big\{ \bigoplus \limits_{n=1}^{N} y^{(n)}_j\big\}_{j \in \mathbb{J}}\Big)$ are dual frames for $\mathcal{H}_n$.
 	Moreover, if $n_1,n_2=1,2,\ldots,N$ with $n_1 \neq n_2$, then for every $h \in \bigoplus \limits_{n=1}^{N} \mathcal{H}_n$, by using the properties of orthogonal projection operator, we can write the following:
   	 \begin{align*}
  	0&=P_{n_1}\big(P_{n_2}h\big)= P_{n_1}\Bigg(\int \limits_{\mathbb{J}}\Big\langle{P_{n_2}h,  \bigoplus \limits_{n=1}^{N} x^{(n)}_j}\Big\rangle\bigoplus \limits_{n=1}^{N} y^{(n)}_jd\mu_{\mathbb{J}}(j)\Bigg)
   	=
   	P_{n_1}\Bigg(\int \limits_{\mathbb{J}}\Big\langle{P^2_{n_2}h,  \bigoplus \limits_{n=1}^{N} x^{(n)}_j}\Big\rangle\bigoplus \limits_{n=1}^{N} y^{(n)}_jd\mu_{\mathbb{J}}(j)\Bigg)\\
   	&=
   	P_{n_1}\Bigg(\int \limits_{\mathbb{J}}\Big\langle{P_{n_2}h, P^{\ast}_{n_2}\Big(\bigoplus \limits_{n=1}^{N} x^{(n)}_j\Big)}\Big\rangle \bigoplus \limits_{n=1}^{N} y^{(n)}_jd\mu_{\mathbb{J}}(j)\Bigg)
   	=
   	\int \limits_{\mathbb{J}}\Big\langle{P_{n_2}h,  P_{n_2}\Big(\bigoplus \limits_{n=1}^{N} x^{(n)}_j\Big)}\Big\rangle P_{n_1}\Big(\bigoplus \limits_{n=1}^{N} y^{(n)}_j\Big)d\mu_{\mathbb{J}}(j)\\
   	&=\int \limits_{\mathbb{J}}\Big\langle{P_{n_2}h,  \big(0\oplus \cdots \oplus x^{(n_2)}_j\oplus \cdots \oplus 0\big)}\Big\rangle\big(0\oplus \cdots \oplus y^{(n_1)}_j\oplus \cdots \oplus 0\big) d\mu_{\mathbb{J}}(j),
   	\end{align*}
   	and hence, we obtain
   	\begin{align*}
   	0&= \int \limits_{\mathbb{J}}\big\langle{P_{n_2}h, x^{(n_2)}_j}\big\rangle\big(0\oplus \cdots \oplus y^{(n_1)}_j\oplus \cdots \oplus 0\big) d\mu_{\mathbb{J}}(j)\\
   	&= 0\oplus \cdots \oplus \Big(\int \limits_{\mathbb{J}}\big\langle{P_{n_2}h, x^{(n_2)}_j}\big\rangle y^{(n_1)}_j d\mu_{\mathbb{J}}(j)\Big)\oplus \cdots \oplus 0, 
   	\end{align*}
   	which implies that 
   	$\displaystyle\int \limits_{\mathbb{J}}\big\langle{P_{n_2}h, x^{(n_2)}_j}\big\rangle y^{(n_1)}_j d\mu_{\mathbb{J}}(j)=0$, that means, $\displaystyle\int \limits_{\mathbb{J}}\big\langle{\widetilde{h}, x^{(n_2)}_j}\big\rangle y^{(n_1)}_j d\mu_{\mathbb{J}}(j)=0$, for all 
$\widetilde{h} \in \mathcal{H}_{n_2}$, and hence 
$\{x^{(n_2)}_j\}_{j \in \mathbb{J}}$ and  $\{y^{(n_2)}_j\}_{j \in \mathbb{J}}$ are orthogonal frames.
Conversely, let us assume that both the conditions (i) and (ii) hold true. Then, for every $h \in \bigoplus \limits_{n=1}^{N} \mathcal{H}_n$, we can write
\begin{align*}
\int \limits_{\mathbb{J}}\Big\langle{h,  \bigoplus \limits_{n=1}^{N} x^{(n)}_j}\Big\rangle\bigoplus \limits_{n=1}^{N} y^{(n)}_jd\mu_{\mathbb{J}}(j)&=\int \limits_{\mathbb{J}}\Big\langle{\bigoplus \limits_{n=1}^{N} P_nh,  \bigoplus \limits_{n=1}^{N} x^{(n)}_j}\Big\rangle\bigoplus \limits_{n=1}^{N} y^{(n)}_jd\mu_{\mathbb{J}}(j)
=
\int \limits_{\mathbb{J}}\sum \limits_{n=1}^{N} \Big\langle{ P_nh,  x^{(n)}_j}\Big\rangle\bigoplus \limits_{n=1}^{N} y^{(n)}_jd\mu_{\mathbb{J}}(j)\\
&=
\Big(\int \limits_{\mathbb{J}}\sum \limits_{n=1}^{N} \big\langle{ P_nh,  x^{(n)}_j}\big\rangle y^{(1)}_jd\mu_{\mathbb{J}}(j)\Big)
\oplus \cdots \oplus
\Big(\int \limits_{\mathbb{J}}\sum \limits_{n=1}^{N} \big\langle{ P_nh,  x^{(n)}_j}\big\rangle y^{(N)}_jd\mu_{\mathbb{J}}(j)\Big)\\
&= \Big(\int \limits_{\mathbb{J}}\big\langle{ P_1h,  x^{(1)}_j}\big\rangle y^{(1)}_jd\mu_{\mathbb{J}}(j)\Big)
\oplus \cdots \oplus
\Big(\int \limits_{\mathbb{J}} \big\langle{ P_Nh,  x^{(N)}_j}\big\rangle y^{(N)}_jd\mu_{\mathbb{J}}(j)\Big)\\
&= P_1h\oplus \cdots \oplus P_Nh=h,
\end{align*}
and hence, we conclude that
\begin{equation}\label{eq 5.1}
\langle{h,h_1}\rangle=\int \limits_{\mathbb{J}}\Big\langle{h,  \bigoplus \limits_{n=1}^{N} x^{(n)}_j}\Big\rangle \Big\langle{\bigoplus \limits_{n=1}^{N} y^{(n)}_j, h_1}\Big\rangle d\mu_{\mathbb{J}}(j),\,\,~\mbox{for all}~\,h, h_1 \in \bigoplus \limits_{n=1}^{N} \mathcal{H}_n.
\end{equation}
Next, we claim that  if for each $1\leq n \leq N$, $\{x^{(n)}_j\}_{j \in \mathbb{J}}$ is a  Bessel family in $\mathcal{H}_n$ with Bessel constant ${\alpha}^{(n)}_2$ (say), then  $\big\{ \bigoplus \limits_{n=1}^{N} x^{(n)}_j\big\}_{j \in \mathbb{J}}$  satisfies the Bessel condition due to the following computation:
\begin{align*}
\int \limits_{\mathbb{J}}\Big|\Big\langle{h,  \bigoplus \limits_{n=1}^{N} x^{(n)}_j}\Big\rangle \Big|^2d\mu_{\mathbb{J}}(j)
&=\int \limits_{\mathbb{J}}\Big|\Big\langle{\bigoplus \limits_{n=1}^{N}P_nh,  \bigoplus \limits_{n=1}^{N} x^{(n)}_j}\Big\rangle \Big|^2d\mu_{\mathbb{J}}(j)
= \int \limits_{\mathbb{J}}\Big|\sum \limits_{n=1}^{N}\big\langle{P_nh,  x^{(n)}_j}\big\rangle \Big|^2d\mu_{\mathbb{J}}(j)\\
& \leq \int \limits_{\mathbb{J}}\sum \limits_{n=1}^{N}\big|\big\langle{P_nh,  x^{(n)}_j}\big\rangle \big|^2d\mu_{\mathbb{J}}(j)
\leq  \sum \limits_{n=1}^{N}{\alpha}^{(n)}_2||h||^2 ,\,\,~\mbox{for all}~\,h \in \bigoplus \limits_{n=1}^{N} \mathcal{H}_n.
\end{align*}
Hence the result follows in view of the fact that  $\big\{ \bigoplus \limits_{n=1}^{N} x^{(n)}_j\big\}_{j \in \mathbb{J}}$ and $\big\{ \bigoplus \limits_{n=1}^{N} y^{(n)}_j\big\}_{j \in \mathbb{J}}$ are Bessel families satisfying the  inequality (\ref{eq 5.1}).  
   \end{proof}
  The following result is an easy consequence of Theorem~\ref{thm 4.1},  the proof for which follows by replacing the general Hilbert spaces  $\mathcal{H}_n$, and the sequences $\{x^{(n)}_j\}_{j \in \mathbb{J}}$ and  $\{y^{(n)}_j\}_{j \in \mathbb{J}}$ in Theorem~\ref{thm 4.1} respectively with $L^2(G)$, and the GTI systems 
   $\bigcup\limits_{j \in J}\{ T_{\gamma}g^{(n)}_{j,p}
  		\}_{\gamma \in \Gamma_j,\, p \in P_j}$
  and 
   $\bigcup\limits_{j \in J}\{ T_{\gamma}h^{(n)}_{j,p}
  		\}_{\gamma \in \Gamma_j,\, p \in P_j}$ for each $n=1,2,\ldots, N$: 
  \begin{corollary}\label{prop 4.3}
  	For each $1 \leq n \leq N$, let the GTI systems  $\bigcup\limits_{j \in J}\{ T_{\gamma}g^{(n)}_{j,p}
  	\}_{\gamma \in \Gamma_j,\, p \in P_j}$ and $\bigcup\limits_{j \in J}\{ T_{\gamma}h^{(n)}_{j,p}
  	\}_{\gamma \in \Gamma_j,\, p \in P_j}$ be Bessel families in $L^2(G)$. Then, $\big\{\bigoplus \limits_{n=1}^{N} g^{(n)}_{j,p}\big\}_{p \in P_j,\,j \in J}$ and $\big\{\bigoplus \limits_{n=1}^{N} h^{(n)}_{j,p}\big\}_{p \in P_j,\,j \in J}$ form a super-dual frame pair in $L^2(G)^{(N)}$ if, and only if, both of the following  hold:
  	\begin{itemize}
  		\item [(i)] for each $1 \leq n \leq N$, $\{g^{(n)}_{j,p}\}_{p \in P_j,\,j \in J}$ and $\{h^{(n)}_{j,p}\}_{p \in P_j,\,j \in J}$ form a dual frame pair in  $L^2(G)$,
  		\item [(ii)] 
  		for $n_1,n_2=1,2,\ldots,N$ with $n_1 \neq n_2$, $\bigcup\limits_{j \in J}\{T_{\gamma}g^{(n_1)}_{j,p}\}_{\gamma \in \Gamma_j,\, p \in P_j}$ and $\bigcup\limits_{j \in J}\{T_{\gamma}h^{(n_2)}_{j,p}\}_{\gamma \in \Gamma_j,\, p \in P_j}$ are GTI-orthogonal frame systems in $L^2(G)$. 
  	\end{itemize}
  		In particular, 
  		for each  $1\leq n \leq  N$, $j \in J$ and $p \in P_j$, by using $g^{(n)}_{j,p}= h^{(n)}_{j,p}$ in
  		 the above result, we get a characterization of $\big\{\bigoplus \limits_{n=1}^{N} g^{(n)}_{j,p}\big\}_{p \in P_j,\,j \in J}$ such that  the super-GTI system generated by $\big\{\bigoplus \limits_{n=1}^{N} g^{(n)}_{j,p}\big\}_{p \in P_j,\,j \in J}$  forms a Parseval frame for $L^2(G)^{(N)}$. 
  \end{corollary}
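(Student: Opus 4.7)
The plan is to deduce Corollary~\ref{prop 4.3} as a direct specialization of Theorem~\ref{thm 4.1}, making the substitutions spelled out in the statement and then checking that the abstract hypotheses of Theorem~\ref{thm 4.1} translate correctly into the GTI language (in particular, that ``orthogonal frames'' in the sense of Definition~\ref{de 1.4} matches ``GTI-orthogonal frame systems'' in the sense of Definition~\ref{de 2.4}).

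First I would set $\mathcal{H}_n = L^2(G)$ for every $1\leq n\leq N$, so that $\bigoplus_{n=1}^N \mathcal{H}_n = L^2(G)^{(N)}$. For the continuous index set I would take the disjoint union $\mathbb{J} := \bigsqcup_{j \in J}(\mathcal{D}_j \times \Gamma_j)$, equipped with the obvious $\sigma$-algebra and measure obtained by summing the measures $\mu_{\mathcal{D}_j}\otimes \mu_{\Gamma_j}$ over $j \in J$; this is a $\sigma$-finite measure space by the standing hypotheses of Subsection~3.2. To a generic index $((p,\gamma), j)$ in $\mathbb{J}$ I would associate $x^{(n)}_{((p,\gamma),j)} := T_\gamma g^{(n)}_{j,p}$ and $y^{(n)}_{((p,\gamma),j)} := T_\gamma h^{(n)}_{j,p}$. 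With this identification, the super-GTI system of Subsection~3.1 is literally the family $\bigl\{\bigoplus_{n=1}^N x^{(n)}_{((p,\gamma),j)}\bigr\}$ appearing in Theorem~\ref{thm 4.1}, and the dual version is handled identically. Weak measurability in each coordinate was already checked in the discussion preceding the Standing Hypotheses, so the measurability side-conditions needed to invoke Theorem~\ref{thm 4.1} come for free.

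Next I would match the hypotheses. The Bessel assumption on each $\bigcup_{j\in J}\{T_\gamma g^{(n)}_{j,p}\}$ and $\bigcup_{j\in J}\{T_\gamma h^{(n)}_{j,p}\}$ in $L^2(G)$ is exactly the Bessel assumption on $\{x^{(n)}\}$ and $\{y^{(n)}\}$ required in Theorem~\ref{thm 4.1}. Under these assumptions, Theorem~\ref{thm 4.1} asserts that the direct-sum families are dual frames for $L^2(G)^{(N)}$ if and only if (i) each component pair is a dual frame for $L^2(G)$ and (ii) the cross-component pairs are orthogonal frames. Condition (i) is by definition condition (i) of Corollary~\ref{prop 4.3}. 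For (ii), I would note that ``orthogonal frames'' in Theorem~\ref{thm 4.1} means the mixed dual Gramian operator $\Theta_{\mathbb{G}}\Theta^{\ast}_{\mathbb{F}}$ vanishes (Definition~\ref{de 1.4}); applied to the GTI systems $\bigcup_{j\in J}\{T_\gamma g^{(n_1)}_{j,p}\}$ and $\bigcup_{j\in J}\{T_\gamma h^{(n_2)}_{j,p}\}$ with $n_1\neq n_2$, this is precisely the content of Definition~\ref{de 2.4}, giving condition (ii) of Corollary~\ref{prop 4.3}.

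The only step that takes real care, and therefore the main obstacle, is the bookkeeping that turns the single abstract index $j \in \mathbb{J}$ in Theorem~\ref{thm 4.1} into the compound index $(j,p,\gamma)$ running over the nested union $\bigcup_{j \in J}(P_j \times \Gamma_j)$: one must verify that the relevant integrals/sums $\int_{\mathbb{J}} \langle \cdot, x^{(n)}\rangle y^{(n)} \, d\mu_{\mathbb{J}}$ coincide with the iterated expressions $\sum_{j\in J}\int_{P_j}\int_{\Gamma_j}$ appearing in the GTI setting, so that Bessel/dual/orthogonality statements on the two sides really are the same statements. This is a Fubini-type check that is routine given the $\sigma$-finiteness in the Standing Hypotheses and the weak measurability established earlier in Section~3, so no new technique is required. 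Finally, specializing to $g^{(n)}_{j,p} = h^{(n)}_{j,p}$ makes each component a Parseval frame and forces the cross-component mixed dual Gramians to vanish, yielding the Parseval-frame characterization noted at the end of the statement.
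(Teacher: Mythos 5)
Your proposal is correct and follows essentially the same route as the paper, which likewise obtains this corollary by specializing Theorem~\ref{thm 4.1} with $\mathcal{H}_n = L^2(G)$ and the abstract families $\{x^{(n)}_j\}_{j\in\mathbb{J}}$, $\{y^{(n)}_j\}_{j\in\mathbb{J}}$ replaced by the GTI systems indexed over $\bigcup_{j\in J}(P_j\times\Gamma_j)$. The measurability and Fubini-type bookkeeping you flag is exactly the content the paper leaves implicit in its one-line justification, and your matching of Definition~\ref{de 1.4} with Definition~\ref{de 2.4} for the cross-component orthogonality is the right reading of condition (ii).
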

  	 
  \begin{proof}[Proof of  Theorem~\ref{thm 4.51}]
  Observe that Corollary~\ref{prop 4.3}(i) is equivalent to (\ref{eq 2.51}) in view of a result on dual frames from \cite[Theorem 3.4]{JL}, and hence the proof follows by using this fact in Corollary~\ref{prop 4.3} along with Theorem~\ref{thm 3.1}, that is, a characterization result for GTI-orthogonal frame systems. 
  \end{proof}
  \section{Applications of the main characterization results}\label{sec 6}
   The purpose of this section is to discuss applications of our main results stated in Subsection~\ref{sec 3.3}, that is, Theorem~\ref{thm 2.1} and Theorem~\ref{thm 4.51} to the Bessel families having wave-packet structure which are obtained by applying certain collections of dilations, modulations and translations to a countable family of functions in $L^2(G)$. As a consequence, we obtain results for wavelet and Gabor systems in Subsection~\ref{sec 6.2}. Along with this, we connect the already existing results from the literature with the theory discussed in this article by providing various examples in case of  $G=\mathbb{R}^d,\,\mathbb{Z}^d$ etc. 
  \subsection{Wave-Packet Systems}\label{sec 6.1}
  
  \noindent
  
  For a given second countable LCA group $G$, let {{\bf{Epi}}}($G$), {{\bf{Epick}}($G$) and {{\bf{Aut}}($G$)}} respectively denote  the semigroup of continuous group homomorphisms $\alpha$ from $G$ onto $G$, the semigroup of $\alpha \in {{\bf{Epi}}}(G)$ having compact kernel $\ker \alpha$, and the group of topological automorphisms $\alpha$ of $G$ onto itself. Note that {{\bf{Aut}}($G$) $\subset$} {{\bf{Epick}}($G$) $\subset$ {{\bf{Epi}}}($G$)}. For  $\alpha$ $\in$ {{\bf{Epick}}($G$)}, we define the isometric {\textit{dilation operator}} $D_{\alpha}$ by
  \begin{align*}
  	D_{\alpha}:L^2(G)\rightarrow L^2(G);\,\,  D_{\alpha}f(x)=({\Delta(\alpha)})^{-1/2}f(\alpha(x)), \,\,\mbox{for all}~\,x \in G,
  \end{align*}
  where the {\textit{modular function}}  $\Delta:$~{{\bf{Epick}}($G$)$\rightarrow (0,\infty)$} is a semigroup homomorphism such that 
  \begin{align*}
  	\int \limits_{G}(g \circ \alpha)(x)d\mu_{G}(x)=\Delta(\alpha)\int\limits_{G} g(x) d\mu_{G}(x)
  \end{align*}
  for all integrable functions $g$ on $G$ with respect to the Haar measure $\mu_{G}$ (see \cite[Theorem 6.2]{BR}). For a character $\chi$ in $\widehat{G}$, we define the {\textit{modulation operator}} $M_{\chi}$ on $L^2(G)$ as 
  	 \begin{align*}
  	M_{\chi}(f)(x)=\chi(x)f(x),\,\,\mbox{for all}~~ x \in G,
  	\end{align*}
  and observe that for each $\chi \in \widehat{G}$, it is associated with the translation operator on $L^2(\widehat{G})$ by the relation \begin{equation}\label{eq 6.1}
  (\widehat{M_{\chi}f})(\xi)
  = \int \limits_{G}  \chi(x)f(x)\overline{\xi(x)} d\mu_{G}(x)
  =\int \limits_{G}  f(x)\overline{\big(\xi-\chi\big)(x)} d\mu_{G}(x)
  =\widehat{f}\big(\xi-\chi \big)
  =T_{\chi}\widehat{f}(\xi),
  \end{equation}
for all $f \in L^2(G)$ and a.e. $\xi \in \widehat{G}$. Further, note that for each 
$\alpha$ $\in$ {{\bf{Epick}}($G$)}, the dilation operator on $L^2(G)$ satisfies the following relation (see \cite[Lemma 6.6]{BR}):

\begin{equation}\label{eq 6.222}
\widehat{(D_{\alpha}f)}(\chi) = \begin{cases}
 (\Delta(\alpha))^{1/2}\widehat{f}({\beta}^{-1}(\chi)) & \text{for } \chi \in \beta(\widehat{G})=(\ker \alpha)^{\perp},\\
 0 & \text{otherwise,} 
 \end{cases}
 \end{equation}
for all $f \in L^2(G)$, where by $\beta:={\alpha}^{\ast}$, we denote the adjoint of $\alpha$ $ \in$ {{\bf{Epick}}($G$)} which  is a topological isomorphism  $\beta: \widehat{G} \rightarrow (\ker \alpha)^{\perp};\,\chi \mapsto \chi \circ \alpha$ in view of \cite[Proposition 6.5]{BR}. 

  Let $\mathcal{A}$ be a subset of 
  {{\bf{Epick}}($G$)}, let $\Gamma$ and  $\Lambda$ be respectively co-compact subgroups of $G$ and $\widehat{G}$, and for some index set $J \subset \mathbb{Z}$, let $\Psi:=\{\psi_j:j \in J \}$ be a subset of $L^2(G)$. Then, we define the {\textit{wave-packet system}} generated by $\Psi$ as: 
  	\begin{equation}\label{eq 6.2}
  	\mathcal{W}(\Psi, \mathcal{A}, \Gamma, \Lambda):=\{D_{\alpha}T_{\gamma}M_{\chi}\psi_j: \alpha \in \mathcal{A},\gamma \in \Gamma, \chi \in \Lambda, j \in J\}.
  	\end{equation}
  In the case of $L^2(\mathbb{R})$ and $L^2(\mathbb{R}^d)$, the systems of the above form  have been studied by several authors, including \cite{LWW,HLWW,CR}, and various references within. The wave-packet systems were originally introduced by C\'ordoba and Fefferman \cite{CF}, and the collection defined in (\ref{eq 6.2}) generalizes the notion of such systems in the context of LCA groups.
In particular, the wavelet and Gabor systems can be seen as special cases of (\ref{eq 6.2}) which we shall discuss in Subsection~\ref{sec 6.2}. 

 The following commutator relation helps in  representing the collection (\ref{eq 6.2}) in the form of a GTI system. This relation says that for each $\alpha \in \mathcal{A}$, $\gamma \in \Gamma$, $\chi \in \Lambda$, and $j \in J$, we have: 
 \begin{align*} 
 D_{\alpha}T_{\gamma}M_{\chi}\psi_j(x)&={(\Delta(\alpha))}^{-1/2}T_{\gamma}M_{\chi}\psi_j(\alpha(x))={(\Delta(\alpha))}^{-1/2}M_{\chi}\psi_j(\alpha(x)-\gamma)\\
 &={(\Delta(\alpha))}^{-1/2}M_{\chi}\psi_j(\alpha(x-\gamma_1))
 =D_{\alpha}M_{\chi}\psi_j(x-\gamma_1)=T_{\gamma_1}D_{\alpha}M_{\chi}\psi_j(x),
 \end{align*}
for all $x \in G$, and for some $\gamma_1 \in {\alpha}^{-1}\Gamma$ such that $\alpha(\gamma_1)=\gamma$.

 In the rest of this section, let $\mathcal{A}$ be a countable subset of {{\bf{Epick}}($G$)}. Then, by using the above commutator relation, the wave-packet system  $\mathcal{W}(\Psi, \mathcal{A}, \Gamma, \Lambda)$ will represent  a GTI system of the form  $\bigcup\limits_{\alpha \in \mathcal{A}}\{T_{\gamma}g_{\alpha,p}\}_{\gamma \in \Gamma_{\alpha},\, p \in P_{\alpha}}$ for
 $\Gamma_{\alpha}:={\alpha}^{-1}\Gamma$ with $\alpha \in \mathcal{A}$,
  $g_{\alpha, p}=g_{\alpha, (j,\chi)}=D_{\alpha}M_{\chi}\psi_j$ for  $(\alpha,p)=(\alpha,(j,\chi))$ in   $\mathcal{A}\times (J\times \Lambda)$. In this case, for each $\alpha \in \mathcal{A}$, the measure space $P_{\alpha}:=\{(j,\chi): j \in J,\,\chi \in \Lambda\}$ is equipped with the measure $\mu_{P_{\alpha}}:=\mu_{J \times \Lambda}={(\Delta(\alpha))}^{-1}(\mu_{J}\otimes\mu_{\Lambda})$, where the quantity ${(\Delta(\alpha))}^{-1}$ helps in avoiding the scaling factor in the calculations and $\mu_{J}$ represents the counting measure on $J$. Clearly, the measure  $\mu_{P_{\alpha}}$ is $\sigma$-finite. Here, note that  $\Gamma_{\alpha}={\alpha}^{-1}\Gamma$ is a closed co-compact subgroup of $G$ for each $\alpha \in \mathcal{A}$, in view of \cite[Proposition 6.4]{BR} and the fact that $\alpha$ is a continuous group homomorphism from $G$ onto $G$ along with $\Gamma$ as a closed subgroup of $G$.
  
    Next, we apply Theorem~\ref{thm 4.51} to the wave-packet systems  $\mathcal{W}(\Psi^{(n)}, \mathcal{A}, \Gamma, \Lambda)$ and  $\mathcal{W}(\Phi^{(n)}, \mathcal{A}, \Gamma, \Lambda)$, where for each $1\leq n \leq N$ and any index set $J \subset \mathbb{Z}$, $\Psi^{(n)}:=\{\psi^{(n)}_j\}_{j \in J}$ and $\Phi^{(n)}:=\{\varphi^{(n)}_j\}_{j \in J}$ are subsets in $L^2(G)$.
 Further, we simplify (\ref{eq 2.51}) by considering $\mathcal{W}(\Psi^{(n)}, \mathcal{A}, \Gamma, \Lambda)$ and   $\mathcal{W}(\Phi^{(n)}, \mathcal{A}, \Gamma, \Lambda)$ respectively as GTI systems $\bigcup\limits_{\alpha \in \mathcal{A}}\{T_{\gamma}g^{(n)}_{\alpha,p}\}_{\gamma \in \Gamma_{\alpha},\, p \in P_{\alpha}}$ and $\bigcup\limits_{\alpha \in \mathcal{A}}\{T_{\gamma}h^{(n)}_{\alpha,p}\}_{\gamma \in \Gamma_{\alpha},\, p \in P_{\alpha}}$ for each $1 \leq n \leq N$, where 
   $g^{(n)}_{\alpha, p}=g^{(n)}_{\alpha, (j,\chi)}=D_{\alpha}M_{\chi}\psi^{(n)}_j$ and $h^{(n)}_{\alpha, p}=h^{(n)}_{\alpha, (j,\chi)}=D_{\alpha}M_{\chi}{\varphi}^{(n)}_j$ for  $(\alpha,p)=(\alpha,(j,\chi)) \in  \mathcal{A}\times P_{\alpha}= \mathcal{A}\times (J\times \Lambda)$. Hence, for each $1 \leq n \leq N$, $\widetilde{\alpha} \in \bigcup\limits_{\alpha \in \mathcal{A}}\Gamma_{\alpha}^{\perp}$ and for a.e. $\xi \in \bigcup _{\alpha \in \mathcal{A}}(\ker \alpha)^{\perp}$, the expression (\ref{eq 2.51}) takes the following form in view of (\ref{eq 6.1}) and (\ref{eq 6.222}) along with $\beta={\alpha}^{\ast}$:
  \begin{align*}
 \mathcal{T}_{(n,\widetilde{\alpha})}(\xi)&:= \sum\limits_{\alpha \in \mathcal{A}:\, \widetilde{\alpha} \in \Gamma_{\alpha}^{\perp} } \int \limits_{P_{\alpha}} \overline{\widehat {h}_{\alpha,p}^{(n)}(\xi)} \widehat{g}_{\alpha,p}^{(n)}(\xi+  \widetilde{\alpha})d{\mu}_{P_{\alpha}}(p)\\
  &=\sum\limits_{\alpha \in \mathcal{A}:\, \widetilde{\alpha} \in \Gamma_{\alpha}^{\perp} } \int \limits_{J\times \Lambda} \overline{\widehat {h}_{\alpha,(j,\chi)}^{(n)}(\xi)} \widehat{g}_{\alpha,(j,\chi)}^{(n)}(\xi+  \widetilde{\alpha})d{\mu}_{J \times \Lambda}((j,\chi))\\
  &=\sum\limits_{\alpha \in \mathcal{A}:\, \widetilde{\alpha} \in ({\alpha^{-1}\Gamma})^{\perp}}\sum \limits_{j \in J}\int \limits_{\Lambda}
   \overline{\widehat{(D_{\alpha}M_{\chi}\varphi^{(n)}_j})(\xi)} \widehat {(D_{\alpha}M_{\chi}\psi^{(n)}_j)}(\xi+  \widetilde{\alpha})\frac{1}{\Delta(\alpha)}{d{\mu}_{\Lambda}(\chi)}\\
  &= \sum\limits_{\alpha \in \mathcal{A}:\, \widetilde{\alpha} \in {{\beta}\Gamma}^{\perp}}\sum \limits_{j \in J}\int \limits_{\Lambda}
   \overline{\widehat {M_{\chi}\varphi^{(n)}_j}({\beta}^{-1}\xi)} \widehat {M_{\chi}\psi^{(n)}_j}({\beta}^{-1}(\xi+ \widetilde{\alpha}))d{\mu}_{\Lambda}(\chi)\\
  &= \sum\limits_{\alpha \in \mathcal{A}:\, \widetilde{\alpha} \in {{\beta}\Gamma}^{\perp}}\sum \limits_{j \in J}\int \limits_{\Lambda} \overline{T_{\chi}\widehat {\varphi}^{(n)}_j({\beta}^{-1}\xi)} T_{\chi}\widehat {\psi}^{(n)}_j({\beta}^{-1}(\xi+ \widetilde{\alpha}))d{\mu}_{\Lambda}(\chi)\\
  &=\sum\limits_{\alpha \in \mathcal{A}:\, \widetilde{\alpha} \in {{\beta}\Gamma}^{\perp}}\sum \limits_{j \in J}\int \limits_{\Lambda} \overline{\widehat {\varphi}^{(n)}_j({\beta}^{-1}\xi-\chi)} \widehat {\psi}^{(n)}_j({\beta}^{-1}(\xi+ \widetilde{\alpha})-\chi)d{\mu}_{\Lambda}(\chi)=: \widetilde{\mathcal{T}}_{(n,\widetilde{\alpha})}(\xi) ~\mbox{(say)},
  \end{align*}
  whereas for the case of $\xi \in \widehat{G}\setminus \bigcup _{\alpha \in \mathcal{A}}(\ker \alpha)^{\perp}$ a.e., we get $\mathcal{T}_{(n,\widetilde{\alpha})}(\xi)=0,$ by proceeding in the similar way as above. Hence, we can write
  \begin{equation}\label{eq 6.41}
  \mathcal{T}_{(n,\widetilde{\alpha})}(\xi) = \begin{cases}
  \widetilde{\mathcal{T}}_{(n,\widetilde{\alpha})}(\xi) & \text{for a.e.}\,\,\,\, \xi \in \bigcup _{\alpha \in \mathcal{A}}(\ker \alpha)^{\perp},\\
  0 & \text{otherwise.} 
  \end{cases}
  \end{equation}

Now, to apply Theorem~\ref{thm 4.51} on the wave-packet systems, we require that for each $1 \leq n \leq N$ and for a.e. $\xi \in \widehat{G}$, $\mathcal{T}_{(n,\widetilde{\alpha})}(\xi)$ in (\ref{eq 6.41})
should be equal to $\delta_{\widetilde{\alpha},0}$ for all $\widetilde{\alpha} \in \bigcup\limits_{\alpha \in \mathcal{A}}\Gamma_{\alpha}^{\perp}$, which is not true whenever $\xi$ is an element of $\widehat{G}\setminus \bigcup _{\alpha \in \mathcal{A}}(\ker \alpha)^{\perp}$ since in this case for $\widetilde{\alpha}=0$ we have $\mathcal{T}_{(n,0)}(\xi)=0\neq 
\delta_{0,0}$ for a.e. $\xi$. But, if we assume $\alpha \in$ {{\bf{Aut}}($G$) $\subset$} {{\bf{Epick}}($G$), then 
 $(\ker \alpha)^{\perp}= (0)^{\perp}=\widehat{G}$, and hence for all $\widetilde{\alpha} \in \bigcup\limits_{\alpha \in \mathcal{A}}\Gamma_{\alpha}^{\perp}$, 
 \begin{equation}\label{eq 6.5}
 {\mathcal{T}}_{(n,\widetilde{\alpha})}(\xi)=\widetilde{\mathcal{T}}_{(n,\widetilde{\alpha})}(\xi)= \delta_{\widetilde{\alpha},0},\,\, ~\mbox{for a.e.}~\,\, \xi \in \widehat{G},
 \end{equation}
 and, in the similar way, for each $1 \leq n_1, n_2 \leq N$ and  $\widetilde{\alpha} \in \bigcup\limits_{\alpha \in \mathcal{A}}\Gamma_{\alpha}^{\perp}$, we have
 \begin{align*}
 \mathcal{T}_{(n_1,n_2,\widetilde{\alpha})}(\xi)&:= \sum\limits_{\alpha \in \mathcal{A}:\, \widetilde{\alpha} \in \Gamma_{\alpha}^{\perp} } \int \limits_{P_{\alpha}} \overline{\widehat {h}_{\alpha,p}^{(n_1)}(\xi)} \widehat{g}_{\alpha,p}^{(n_2)}(\xi+  \widetilde{\alpha})d{\mu}_{P_{\alpha}}(p)\\ &=\sum\limits_{\alpha \in \mathcal{A}:\, \widetilde{\alpha} \in {{\beta}\Gamma}^{\perp}}\sum \limits_{j \in J}\int \limits_{\Lambda} \overline{\widehat {\varphi}^{(n_1)}_j({\beta}^{-1}\xi-\chi)} \widehat {\psi}^{(n_2)}_j({\beta}^{-1}(\xi+ \widetilde{\alpha})-\chi)d{\mu}_{\Lambda}(\chi),
 \end{align*} 
 which by applying Theorem~\ref{thm 4.51} to the wave-packet systems implies that 
 \begin{equation}
 \label{eq 6.6}
 \mathcal{T}_{(n_1,n_2,\widetilde{\alpha})}(\xi)=0, \,\,~\mbox{for each}~\,\, 1\leq n_1\neq n_2 \leq N, \,\,~\mbox{and a.e.}~\,\, \xi \in \widehat{G}.
 \end{equation}
 
    The above discussion leads to the following result which provides the conditions on {$\Psi^{(n)}$ and  $\Phi^{(n)}$ such that the wave-packet systems generated by $\bigoplus \limits_{n=1}^{N}\Psi^{(n)}$ and  $\bigoplus \limits_{n=1}^{N}\Phi^{(n)}$ form dual frames in $L^2(G)^{(N)}$:
     	
     \begin{theorem}
     	\label{thm 6.11}
     	For each $1 \leq n \leq N$, let the wave-packet systems $\mathcal{W}(\Psi^{(n)}, \mathcal{A}, \Gamma, \Lambda)$ and  $\mathcal{W}(\Phi^{(n)}, \mathcal{A}, \Gamma, \Lambda)$ be Bessel families in $L^2(G)$ satisfying the corresponding dual $\alpha$-LIC, where $\mathcal{A}$ is a countable subset of {{\bf{Aut}}$($$G$$)$}. Then, the wave-packet systems generated by  $\bigoplus \limits_{n=1}^{N}\Psi^{(n)}$ and  $\bigoplus \limits_{n=1}^{N}\Phi^{(n)}$ 
     	$($we call as {\textit{super wave-packet systems}}$) $  form  dual  frames for $L^2(G)^{(N)}$ if, and only if, for a.e. $\xi \in \widehat{G}$, both of the following hold:
     	\begin{itemize}	
     		\item[(i)] 
     		for each $1 \leq n \leq N$ and  $\widetilde{\alpha} \in \bigcup \limits_{\alpha \in \mathcal{A}} \Gamma^{\perp}_{\alpha}$, we have 
     		\begin{equation}\label{eq 6.4111} 
     		\sum\limits_{\alpha \in \mathcal{A}:\, \widetilde{\alpha} \in {\Gamma}^{\perp}_{\alpha}}\sum \limits_{j \in J}\int \limits_{\Lambda} \overline{\widehat {\varphi}^{(n)}_j({\beta}^{-1}\xi-\chi)} \widehat {\psi}^{(n)}_j({\beta}^{-1}(\xi+ \widetilde{\alpha})-\chi)d{\mu}_{\Lambda}(\chi)=\delta_{\widetilde{\alpha},0}, 
     		\end{equation}
     		\item[(ii)] for each $1 \leq n_1\neq n_2 \leq N$ and  $\widetilde{\alpha} \in \bigcup \limits_{\alpha \in \mathcal{A}} {\Gamma}^{\perp}_{\alpha}$, we have 
     		\begin{equation}\label{eq 6.41111} 
     		\sum\limits_{\alpha \in \mathcal{A}:\, \widetilde{\alpha} \in {\Gamma}^{\perp}_{\alpha}}\sum \limits_{j \in J}\int \limits_{\Lambda} \overline{\widehat {\varphi}^{(n_1)}_j({\beta}^{-1}\xi-\chi)} \widehat {\psi}^{(n_2)}_j({\beta}^{-1}(\xi+ \widetilde{\alpha})-\chi)d{\mu}_{\Lambda}(\chi)=0,
     		\end{equation}
     	\end{itemize}
    where for $\beta={\alpha}^
    {\ast}$, $\Gamma^{\perp}_{\alpha}$ is given by $\beta\Gamma^{\perp}$. 	
     \end{theorem}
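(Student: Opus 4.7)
The plan is to invoke Theorem~\ref{thm 4.51} after recasting the wave-packet systems as super-GTI systems. First, I would use the commutator relation $D_{\alpha}T_{\gamma}M_{\chi} = T_{\alpha^{-1}\gamma}D_{\alpha}M_{\chi}$ (valid because $\alpha \in \textbf{Aut}(G)$, so $\alpha^{-1}\gamma$ makes sense) to write each $\mathcal{W}(\Psi^{(n)}, \mathcal{A}, \Gamma, \Lambda)$ and $\mathcal{W}(\Phi^{(n)}, \mathcal{A}, \Gamma, \Lambda)$ as a GTI system of the form $\bigcup_{\alpha \in \mathcal{A}}\{T_{\gamma}g^{(n)}_{\alpha,p}\}_{\gamma \in \Gamma_\alpha, p \in P_\alpha}$ and $\bigcup_{\alpha \in \mathcal{A}}\{T_{\gamma}h^{(n)}_{\alpha,p}\}_{\gamma \in \Gamma_\alpha, p \in P_\alpha}$ respectively, with $\Gamma_\alpha = \alpha^{-1}\Gamma$, generators $g^{(n)}_{\alpha,(j,\chi)} = D_\alpha M_\chi \psi^{(n)}_j$ and $h^{(n)}_{\alpha,(j,\chi)} = D_\alpha M_\chi \varphi^{(n)}_j$, and the index measure space $P_\alpha = J \times \Lambda$ equipped with the scaled product measure $(\Delta(\alpha))^{-1}(\mu_J \otimes \mu_\Lambda)$. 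The $\sigma$-finiteness and standing hypotheses are inherited because $J$ is countable and $\Lambda$ is second countable.

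Once this GTI reformulation is in place, Theorem~\ref{thm 4.51} reduces the super-dual frame property to two equalities: for each $1 \leq n \leq N$ and each $1 \leq n_1 \neq n_2 \leq N$, the quantities
\[
\mathcal{T}_{(n,\widetilde{\alpha})}(\xi) = \sum_{\alpha \in \mathcal{A}:\,\widetilde{\alpha}\in \Gamma_\alpha^\perp}\int_{P_\alpha}\overline{\widehat{h}^{(n)}_{\alpha,p}(\xi)}\,\widehat{g}^{(n)}_{\alpha,p}(\xi+\widetilde{\alpha})\,d\mu_{P_\alpha}(p)
\]
and $\mathcal{T}_{(n_1,n_2,\widetilde{\alpha})}(\xi)$ must equal $\delta_{\widetilde{\alpha},0}$ and $0$ respectively, for a.e.\ $\xi\in\widehat{G}$ and all $\widetilde{\alpha}\in\bigcup_{\alpha\in\mathcal{A}}\Gamma_\alpha^\perp$. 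The key step is then to evaluate $\mathcal{T}_{(n,\widetilde{\alpha})}$ and $\mathcal{T}_{(n_1,n_2,\widetilde{\alpha})}$ in Fourier terms using the identities $\widehat{M_\chi f}(\xi) = T_\chi\widehat{f}(\xi)$ from (\ref{eq 6.1}) and the dilation rule (\ref{eq 6.222}), while absorbing the $\Delta(\alpha)^{-1}$ factor from $\mu_{P_\alpha}$ against the $\Delta(\alpha)^{1/2}\cdot\Delta(\alpha)^{1/2}$ produced by the two dilated Fourier transforms. This computation has effectively been performed in the paragraphs preceding the theorem statement, yielding (\ref{eq 6.41}) with its explicit expression $\widetilde{\mathcal{T}}_{(n,\widetilde{\alpha})}(\xi)$ on $\bigcup_{\alpha\in\mathcal{A}}(\ker\alpha)^\perp$ and zero elsewhere, and one identifies $\widetilde{\mathcal{T}}_{(n,\widetilde{\alpha})}$ with the left-hand side of (\ref{eq 6.4111}) via the relation $\Gamma_\alpha^\perp = \beta\Gamma^\perp$ where $\beta = \alpha^\ast$.

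The main obstacle is the discrepancy on $\widehat{G}\setminus\bigcup_{\alpha\in\mathcal{A}}(\ker\alpha)^\perp$: there the sum $\mathcal{T}_{(n,0)}(\xi)$ vanishes identically and therefore cannot equal the required $\delta_{0,0} = 1$. This is precisely where the hypothesis $\mathcal{A}\subset\textbf{Aut}(G)$ becomes essential; since $\ker\alpha = \{0\}$ for each automorphism $\alpha$, we have $(\ker\alpha)^\perp = \widehat{G}$, so the exceptional set is empty (up to a $\mu_{\widehat{G}}$-null set), and the exceptional second case in (\ref{eq 6.41}) never occurs. Moreover, $\beta = \alpha^\ast$ is a topological isomorphism of $\widehat{G}$, so $\beta^{-1}\xi$ ranges over all of $\widehat{G}$ and the a.e.\ statements transfer cleanly between $\xi$ and $\beta^{-1}\xi$.

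Combining these, condition (\ref{eq 2.51}) of Theorem~\ref{thm 4.51} translates term by term into (\ref{eq 6.4111}), and condition (\ref{eq 2.61}) into (\ref{eq 6.41111}); the dual $\alpha$-LIC hypothesis is the same on both sides of the translation, since it is stated in terms of the GTI generators $g^{(n)}_{\alpha,p},\,h^{(n)}_{\alpha,p}$ which are precisely the dilated modulations of $\psi^{(n)}_j,\,\varphi^{(n)}_j$. The equivalence in Theorem~\ref{thm 6.11} follows.
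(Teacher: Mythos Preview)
Your proposal is correct and follows essentially the same route as the paper. The only cosmetic difference is that you invoke Theorem~\ref{thm 4.51} directly, whereas the paper's proof cites Corollary~\ref{prop 4.3} together with the computations (\ref{eq 6.5}) and (\ref{eq 6.6}) carried out just before the theorem; since Theorem~\ref{thm 4.51} is itself derived from Corollary~\ref{prop 4.3} via those same Fourier-side identities, the two arguments are equivalent.
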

    
    \begin{proof}
   The proof can be concluded by  observing that for each $1 \leq n \leq N$, if we consider $\mathcal{W}(\Psi^{(n)}, \mathcal{A}, \Gamma, \Lambda)$ and  $\mathcal{W}(\Phi^{(n)}, \mathcal{A}, \Gamma, \Lambda)$ as Bessel families satisfying  corresponding dual $\alpha$-LIC, then 
   for each $1 \leq n \leq N$, $(\Psi^{(n)}, \Phi^{(n)})$ is a dual frame pair in $L^2(G)$ if, and only if, in view of (\ref{eq 6.5}), the relation (\ref{eq 6.4111}) holds. Moreover, 
  for $1\leq n_1 \neq n_2 \leq N$, under the same assumptions, $\mathcal{W}(\Psi^{(n_1)}, \mathcal{A}, \Gamma, \Lambda)$ and  $\mathcal{W}(\Phi^{(n_2)}, \mathcal{A}, \Gamma, \Lambda)$  are orthogonal frames if, and only if, the relation (\ref{eq 6.41111}) is satisfied
  by using (\ref{eq 6.6}).
  Thus, the proof follows from Corollary~\ref{prop 4.3}.
\end{proof}
 The following can be easily deduced from the above result. Note that it  generalizes similar  results of Labate et al. \cite{LWW} and Hern\'andez et al.\cite{HLWW} to the setting of super-spaces over LCA groups.
\begin{corollary} \label{coro 6.11} 
For each $1 \leq n \leq N$, let  $\mathcal{W}(\Psi^{(n)}, \mathcal{A}, \Gamma, \Lambda)$ be a wave-packet system in $L^2(G)$ which satisfies the corresponding $\alpha$-LIC, where $\mathcal{A}$ is a countable subset of {{\bf{Aut}}$($$G$$)$}. Then, the super wave-packet system generated by  $\bigoplus \limits_{n=1}^{N}\Psi^{(n)}$  forms  a Parseval frame for $L^2(G)^{(N)}$ if, and only if, both $($$\ref{eq 6.4111}$$)$ and $($$\ref{eq 6.41111}$$)$ hold for ${\Psi}^{(n)}={\Phi}^{(n)}; 1\leq n \leq N.$
\end{corollary}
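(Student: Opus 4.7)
The plan is to derive Corollary~\ref{coro 6.11} by specializing Theorem~\ref{thm 6.11} to the case $\Phi^{(n)}=\Psi^{(n)}$ for every $1\leq n\leq N$, via the elementary identification of a Parseval frame with a self-dual frame. Under this substitution, condition (\ref{eq 6.4111}) becomes the stated equality for each $n$ and each $\widetilde{\alpha}\in\bigcup_{\alpha\in\mathcal A}\Gamma_\alpha^\perp$, condition (\ref{eq 6.41111}) becomes the cross-term vanishing for $n_1\neq n_2$, and the dual $\alpha$-LIC hypothesis of Theorem~\ref{thm 6.11} collapses exactly to the $\alpha$-LIC hypothesized in the corollary.

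For the ``only if'' direction, if $\bigoplus_{n=1}^N \Psi^{(n)}$ generates a Parseval frame for $L^2(G)^{(N)}$, then by Definition~\ref{de 1.1} it is automatically a Bessel family with constant~$1$, and the Parseval reproducing formula shows it is self-dual in the sense of Definition~\ref{de 1.4}. The full hypotheses of Theorem~\ref{thm 6.11} are therefore met with $\Phi^{(n)}=\Psi^{(n)}$, and relations (\ref{eq 6.4111}) and (\ref{eq 6.41111}) follow at once.

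The converse direction requires a bit more care because Theorem~\ref{thm 6.11} presupposes the Bessel property of both families, and here we are asked to remove that assumption. My strategy is to pass through the dense subspace $\mathfrak{D}^{(N)}\subset L^2(G)^{(N)}$ on which the $\alpha$-LIC guarantees absolute convergence of every sum appearing in the proofs of Lemma~\ref{le 3.2} and Theorem~\ref{thm 3.1}. Following those computations componentwise, the diagonal identities (\ref{eq 6.4111}) at $\widetilde{\alpha}=0$ identify the Fourier-multiplier symbol of each diagonal block of the super mixed dual Gramian operator $\Theta$ with the constant~$1$, while (\ref{eq 6.4111}) at $\widetilde{\alpha}\neq 0$ together with (\ref{eq 6.41111}) kill the remaining shift and cross contributions. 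Consequently $\Theta$ acts as the identity on $\mathfrak{D}^{(N)}$, giving the Parseval identity $\|\mathbf f\|^2=\langle\Theta\mathbf f,\mathbf f\rangle$ on that dense subspace. A standard density argument then extends this norm equality to all of $L^2(G)^{(N)}$, which in particular supplies a posteriori the Bessel bound with constant~$1$ needed to invoke Corollary~\ref{prop 4.3}.

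The main obstacle I foresee is precisely this removal of the a priori Bessel assumption used in Theorem~\ref{thm 6.11}: without it, the analysis and synthesis operators are not known to be bounded at the outset, and one must justify the Fourier-multiplier identification of $\Theta$ by first establishing pointwise control on $\mathfrak{D}$ through the $\alpha$-LIC. Once that identification is made and $\Theta=I_{L^2(G)^{(N)}}$ is obtained on $\mathfrak{D}^{(N)}$, the extension to $L^2(G)^{(N)}$ and the verification of the Parseval property are immediate transcriptions of the dual-pair characterization to the self-dual setting.
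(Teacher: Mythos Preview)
Your proposal is correct and follows exactly the route the paper intends: the paper merely states that the corollary ``can be easily deduced from the above result'' (Theorem~\ref{thm 6.11}) by setting $\Phi^{(n)}=\Psi^{(n)}$, without further detail. Your treatment of the converse---removing the a priori Bessel assumption by working on $\mathfrak{D}^{(N)}$ via the $\alpha$-LIC, obtaining the Parseval identity there, and extending by density---is more explicit than anything the paper writes, but it is precisely what underlies the paper's earlier remark (after Theorem~\ref{thm 4.51}) that the Parseval characterization is obtained ``by removing the Bessel family assumption.'' One minor slip: your reference to ``Definition~\ref{de 1.4}'' for self-duality should point to the dual-frame definition (equation~(\ref{eq 1.2})), since Definition~\ref{de 1.4} concerns orthogonality; this does not affect the argument.
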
	
In the following, by applying Corollary~\ref{coro 6.11} to the case $G=\mathbb{R}^d$, we reach at the results obtained in \cite{LWW, HLW}. Hence, the wave-packet systems within $L^2(\mathbb{R}^d)$ are easily covered within our framework. 
\begin{example}\label{eg 6.3}
	Let $G=\mathbb{R}^d$ (equipped with Lebesgue measure), $\Gamma=\mathbb{Z}^d$ and $\Lambda=\mathbb{R}^d$. Then, $\widehat{G}=\mathbb{R}^d$, with Euclidean metric, we have ${\Gamma}^{\perp}=\mathbb{Z}^d$ and  ${\Lambda}^{\perp}=\{0\}$. Let $A \in$ GL($d,\mathbb{R}$) be a matrix whose eigenvalues are strictly larger than one in modulus, set  $\mathcal{A}=\{x \mapsto A^kx: k \in \mathbb{Z}\}$. Under these assumptions, from (\ref{eq 6.2}), for each $1 \leq n \leq N$, the wave-packet system generated by $\Psi^{(n)}=\{\psi^{(n)}_l\}_{l=1}^{L}\subset L^2(\mathbb{R}^d)$ can be written as 
	\begin{align*}
	\mathcal{W}(\Psi^{(n)}, \mathcal{A}, {\mathbb{Z}}^d, {\mathbb{R}}^d)&:=\big\{D_{A^k}T_{\gamma}M_{\chi}{\psi}^{(n)}_l(\cdot{}): l=1,\ldots,L,\, k \in \mathbb{Z},\gamma \in \mathbb{Z}^d, \chi \in \mathbb{R}^d \big\}\\
	&=\big\{|\det A|^{-k/2}\chi(A^{k}\cdot{}-\gamma){\psi}^{(n)}_l(A^{k}\cdot{}-\gamma): l=1,\ldots,L,\, k \in \mathbb{Z},\gamma \in \mathbb{Z}^d, \chi \in \mathbb{R}^d \big\}.
	\end{align*}
	For each $1 \leq n \leq N$, by letting $\mathcal{W}(\Psi^{(n)}, \mathcal{A}, {\mathbb{Z}}^d, {\mathbb{R}}^d)$ as a wave-packet system in  $L^2(\mathbb{R}^d)$ which satisfies the Bessel condition, we conclude from Corollary~\ref{coro 6.11} that the super wave-packet system generated by  $\bigoplus \limits_{n=1}^{N}\Psi^{(n)}$  form  a Parseval frame for $L^2(\mathbb{R}^d)^{(N)}$ if, and only if, for a.e. $\xi \in \mathbb{R}^d$ and for each $\widetilde{\alpha} \in \bigcup \limits_{k \in \mathbb{Z}} {B}^{k}\mathbb{Z}^d$ along with $B=A^{\ast}$, both of the following hold:
	\begin{itemize}	
		\item[(i)] 
		for each $1 \leq n \leq N$, 
	$
		\displaystyle\sum\limits_{k\in \mathbb{Z}:\, \widetilde{\alpha} \in {B}^{k}\mathbb{Z}^d}\sum \limits_{l=1}^{L}\int \limits_{\mathbb{R}^d} \overline{\widehat {\psi}^{(n)}_l({B}^{-k}\xi-\chi)} \widehat {\psi}^{(n)}_l({B}^{-k}(\xi+ \widetilde{\alpha})-\chi)d(\chi)=\delta_{\widetilde{\alpha},0},$ and
		\item[(ii)] for each $1 \leq n_1\neq n_2 \leq N$,
	$
		\displaystyle\sum\limits_{k\in \mathbb{Z}:\, \widetilde{\alpha} \in {B}^{k}\mathbb{Z}^d}\sum \limits_{l=1}^{L}\int \limits_{\mathbb{R}^d} \overline{\widehat {\psi}^{(n_1)}_l({B}^{-k}\xi-\chi)} \widehat {\psi}^{(n_2)}_l({B}^{-k}(\xi+ \widetilde{\alpha})-\chi)d(\chi)=0.
	$
	\end{itemize}
\end{example}
  \subsection{Special cases of Wave-Packet Systems}\label{sec 6.2}
  \noindent

   \subsubsection{Gabor Systems}\label{sec 6.21}
   
   \noindent 
   In (\ref{eq 6.2}), by assuming $\mathcal{A}=\{I_G\}$, 
   where $I_G$ denotes the identity group homomorphism on $G$, we consider the following system as a special case of wave-packet system defined in (\ref{eq 6.2}) which we call as the {\textit{Gabor system}} generated by $\Psi$: 
   \begin{equation*}\label{eq 6.33}
   \mathcal{G}(\Psi,\Gamma, \Lambda):=\{T_{\gamma}M_{\chi}\psi_j: \gamma \in \Gamma, \chi \in \Lambda, j \in J\},
   \end{equation*}
 At this juncture, it is relevant to note that the system 
 		$\mathcal{G}(\Psi,\Gamma, \Lambda)$ is a frame for $L^2(G)$ if and only if $\{M_{\chi}T_{\gamma}\psi_j: \gamma \in \Gamma, \chi \in \Lambda, j \in J\}$ is a frame for $L^2(G)$ (see  \cite[Lemma 2.4]{JL}), where the later system is termed as a {\textit{co-compact Gabor system}} in \cite{JL1}. Further,
   observe that  $\mathcal{G}(\Psi,\Gamma, \Lambda)$ is a TI system of the form $\bigcup_{j \in J}\{T_{\gamma}g_{j,p}\}_{\gamma \in \Gamma_{j},\, p \in P_j}$ 
   with
   $\Gamma_j=\Gamma$ for $j \in J\subset \mathbb{Z}$ and 
   $g_{j,p}=g_{j,\chi}=M_{\chi}\psi_j$, where $(j,p)=(j,\chi) \in J \times \Lambda$. In this case, for each $j \in J$,  $P_j=\{\chi: \chi \in \Lambda\}$ is equipped with the measure $\mu_{P_j}:={(\Delta(\alpha))}^{-1}\mu_{\Lambda}$ that satisfies the standing hypothesis. Since for TI systems dual $\alpha$-LIC is automatically satisfied, thus, Theorem~\ref{thm 6.11} and Corollary~{\ref{coro 6.11}} lead to the  following result on Gabor systems which  generalizes  \cite[Theorem 4.1]{JL}:
   \begin{proposition}
   	\label{prop 6.1}
   	For each $1 \leq n \leq N$, let the Gabor systems $\mathcal{G}(\Psi^{(n)}, \Gamma, \Lambda)$ and  $\mathcal{G}(\Phi^{(n)}, \Gamma, \Lambda)$ be Bessel families in $L^2(G)$. Then, the Gabor systems generated by  $\bigoplus \limits_{n=1}^{N}\Psi^{(n)}$ and  $\bigoplus \limits_{n=1}^{N}\Phi^{(n)}$ $($we call as super Gabor systems$)$  form dual frames for $L^2(G)^{(N)}$ if, and only if, for a.e. $\xi \in \widehat{G}$, both of the following hold:
   	\begin{itemize}	
   		\item[(i)] 
   		for each $1 \leq n \leq N$ and  $\widetilde{\alpha} \in  \Gamma^{\perp}$, we have 
   		\begin{equation}\label{eq 6.3} 
   		\sum \limits_{j \in J}\int \limits_{\Lambda} \overline{\widehat {\varphi}^{(n)}_j(\xi-\chi)} \widehat {\psi}^{(n)}_j((\xi+ \widetilde{\alpha})-\chi)d{\mu}_{\Lambda}(\chi)=\delta_{\widetilde{\alpha},0}, 
   		\end{equation}
   		\item[(ii)] for each $1 \leq n_1\neq n_2 \leq N$ and  $\widetilde{\alpha} \in {\Gamma}^{\perp}$, we have 
   		\begin{equation}\label{eq 6.4} 
   		\sum \limits_{j \in J}\int \limits_{\Lambda} \overline{\widehat {\varphi}^{(n_1)}_j(\xi-\chi)} \widehat {\psi}^{(n_2)}_j((\xi+ \widetilde{\alpha})-\chi)d{\mu}_{\Lambda}(\chi)=0.
   		\end{equation}
   	\end{itemize}
   \end{proposition}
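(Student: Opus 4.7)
The plan is to derive Proposition~\ref{prop 6.1} as a direct corollary of Theorem~\ref{thm 4.51}, specializing the GTI-system framework to the Gabor setting by taking $\mathcal{A}=\{I_G\}$ in the wave-packet construction (equivalently, invoking Theorem~\ref{thm 6.11} with trivial dilation). Concretely, for each $1\leq n \leq N$, I would view the Gabor system $\mathcal{G}(\Psi^{(n)},\Gamma,\Lambda)$ as a GTI system $\bigcup_{j\in J}\{T_\gamma g^{(n)}_{j,\chi}\}_{\gamma\in\Gamma,\,\chi\in\Lambda}$ by setting $g^{(n)}_{j,\chi} := M_\chi \psi^{(n)}_j$, with the constant family of subgroups $\Gamma_j=\Gamma$ and index set $P_j=\Lambda$ endowed with its Haar measure $\mu_\Lambda$. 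The analogous identification $h^{(n)}_{j,\chi}:=M_\chi\varphi^{(n)}_j$ is made for $\Phi^{(n)}$.

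Before applying Theorem~\ref{thm 4.51}, I must verify its hypotheses. The measurability and $\sigma$-finiteness pieces of the standing hypothesis are routine: $\Lambda$ is a second countable LCA group (so $\mu_\Lambda$ is $\sigma$-finite), and the joint measurability of $(\chi,x)\mapsto M_\chi\psi^{(n)}_j(x)=\chi(x)\psi^{(n)}_j(x)$ follows from the joint continuity of the character pairing. The only non-trivial point is the dual $\alpha$-LIC, but since all subgroups $\Gamma_j$ coincide, the underlying systems are TI systems, and the argument used in the derivation of Corollary~\ref{coro 3.9} applies directly: the LIC is automatic from the Bessel condition via the technique of \cite[Theorem~3.11]{JL}, and then Remark~\ref{re 3.3} upgrades this to the dual $\alpha$-LIC.

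Finally, I would translate conditions \eqref{eq 2.51}--\eqref{eq 2.61} into \eqref{eq 6.3}--\eqref{eq 6.4}. Using the Fourier-side identity \eqref{eq 6.1}, namely $\widehat{M_\chi f}(\xi)=\widehat{f}(\xi-\chi)$, I obtain $\widehat{g}^{(n)}_{j,\chi}(\xi)=\widehat{\psi}^{(n)}_j(\xi-\chi)$ and $\widehat{h}^{(n)}_{j,\chi}(\xi)=\widehat{\varphi}^{(n)}_j(\xi-\chi)$. Since $\Gamma_j^\perp=\Gamma^\perp$ for every $j$, the filtration $\{j\in J:\alpha\in\Gamma_j^\perp\}$ reduces to all of $J$ whenever $\alpha\in\Gamma^\perp$ and is empty otherwise, so the conditions become non-trivial exactly for $\alpha\in\Gamma^\perp$. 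Plugging these expressions into \eqref{eq 2.51} and \eqref{eq 2.61} immediately yields \eqref{eq 6.3} and \eqref{eq 6.4}, respectively.

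There is no serious obstacle in this proof; the content of Proposition~\ref{prop 6.1} is already encoded in Theorem~\ref{thm 4.51}. The only delicate point is the clean book-keeping of the identification between the intertwined modulation-translation structure of $\mathcal{G}(\Psi^{(n)},\Gamma,\Lambda)$ and the pure translation structure of a TI system, which is handled transparently by the commutator identity $T_\gamma M_\chi=\overline{\chi(\gamma)}M_\chi T_\gamma$ (affecting only a unimodular phase that is irrelevant inside absolute-value/pairing computations). Everything else is a direct substitution.
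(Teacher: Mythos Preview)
Your proposal is correct and follows essentially the same route as the paper: identify each Gabor system as a TI system with $\Gamma_j=\Gamma$, $P_j=\Lambda$, $g^{(n)}_{j,\chi}=M_\chi\psi^{(n)}_j$, note that the dual $\alpha$-LIC is automatic for TI Bessel systems (via \cite[Theorem~3.11]{JL} and Remark~\ref{re 3.3}), and then specialize Theorem~\ref{thm 4.51} (equivalently, Theorem~\ref{thm 6.11} with $\mathcal{A}=\{I_G\}$) using $\widehat{M_\chi f}(\xi)=\widehat f(\xi-\chi)$. The commutator remark at the end is unnecessary since $\mathcal{G}(\Psi^{(n)},\Gamma,\Lambda)=\{T_\gamma M_\chi\psi^{(n)}_j\}$ is already in GTI form without any reordering.
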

   Using Proposition~\ref{prop 6.1}, we make the following observation. One can find similar results on Gabor systems  in different settings, for example, in \cite{LH, KL, LL,J,JL1,HLW,FHS}, and various references within.
   \begin{corollary} \label{coro 6.8} 
   	For each $1 \leq n \leq N$, let  $\mathcal{G}(\Psi^{(n)}, \Gamma, \Lambda)$ be a Gabor system in $L^2(G)$. Then, the super Gabor system generated by  $\bigoplus \limits_{n=1}^{N}\Psi^{(n)}$ forms  a Parseval  frame for $L^2(G)^{(N)}$ if, and only if, both $($$\ref{eq 6.3}$$)$ and $($$\ref{eq 6.4}$$)$ hold for ${\Psi}^{(n)}={\Phi}^{(n)}; 1\leq n \leq N.$
   \end{corollary}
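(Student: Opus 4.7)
The plan is to obtain this corollary as a direct specialization of Proposition~\ref{prop 6.1} by setting $\Phi^{(n)} = \Psi^{(n)}$ for each $1 \leq n \leq N$. The guiding principle is the classical fact that a Bessel family is a Parseval frame precisely when it is a dual frame for itself; the same equivalence transfers to the super-space setting, so the super Gabor system generated by $\bigoplus_{n=1}^{N}\Psi^{(n)}$ is Parseval for $L^2(G)^{(N)}$ if and only if it forms a super-dual frame pair with itself, to which Proposition~\ref{prop 6.1} applies with $\Phi^{(n)} = \Psi^{(n)}$.

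For the forward direction, a Parseval super Gabor system is automatically Bessel with constant $1$; applying Lemma~\ref{prop 5.1}(i) to the orthogonal projection $P_n \colon L^2(G)^{(N)} \to L^2(G)$ onto the $n$-th coordinate shows that each component $\mathcal{G}(\Psi^{(n)}, \Gamma, \Lambda)$ is itself Bessel (indeed, a Parseval frame) in $L^2(G)$. The Bessel hypothesis of Proposition~\ref{prop 6.1} is thus met with $\Phi^{(n)} = \Psi^{(n)}$, and its conclusion yields (\ref{eq 6.3}) and (\ref{eq 6.4}) with $\Phi^{(n)} = \Psi^{(n)}$ as asserted.

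The main obstacle is the converse: one must recover the Bessel hypothesis of Proposition~\ref{prop 6.1} directly from (\ref{eq 6.3}) and (\ref{eq 6.4}) with $\Phi^{(n)} = \Psi^{(n)}$. The key step is to specialize (\ref{eq 6.3}) to $\widetilde{\alpha} = 0$, which yields
\begin{equation*}
\sum_{j \in J}\int_{\Lambda} \bigl|\widehat{\psi}^{(n)}_j(\xi - \chi)\bigr|^2\, d\mu_\Lambda(\chi) = 1 \quad \text{for a.e.}~\xi \in \widehat{G};
\end{equation*}
together with the Cauchy--Schwarz estimate carried out in (\ref{eq 4.3}) for dual Gramian symbols, this produces a Bessel bound of $1$ for each $\mathcal{G}(\Psi^{(n)}, \Gamma, \Lambda)$. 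Proposition~\ref{prop 6.1} then delivers the super-dual frame pair conclusion with $\Phi = \Psi$, which is exactly the Parseval property. The dual $\alpha$-LIC hypothesis of Proposition~\ref{prop 6.1} poses no additional issue, since Gabor systems are TI systems ($\Gamma_j = \Gamma$ for all $j$), for which the dual $\alpha$-LIC follows from Remark~\ref{re 3.3} and the Bessel condition, as noted in the discussion preceding Corollary~\ref{coro 3.9}.
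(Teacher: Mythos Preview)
Your overall strategy---specialize Proposition~\ref{prop 6.1} to $\Phi^{(n)}=\Psi^{(n)}$---matches the paper, which simply states that Corollary~\ref{coro 6.8} follows from Proposition~\ref{prop 6.1} (via Corollary~\ref{coro 6.11}) without further detail. Your forward direction is fine and is in fact more carefully argued than the paper's one-line deduction.

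The converse direction, however, has a genuine gap. Your appeal to the estimate (\ref{eq 4.3}) is circular: that inequality was obtained by invoking \cite[Proposition~3.3]{JL}, which \emph{assumes} a common Bessel bound $\beta$ for the two GTI systems, so it cannot be used to manufacture a Bessel bound. More substantively, the $\widetilde\alpha=0$ condition you extract from (\ref{eq 6.3}),
\[
\sum_{j\in J}\int_{\Lambda}\bigl|\widehat{\psi}^{(n)}_j(\xi-\chi)\bigr|^{2}\,d\mu_{\Lambda}(\chi)=1\quad\text{a.e.},
\]
does \emph{not} by itself imply that $\mathcal G(\Psi^{(n)},\Gamma,\Lambda)$ is Bessel; for a TI system $\{T_\gamma g_p\}_{\gamma\in\Gamma,\,p\in P}$ the Bessel property is governed by all the off-diagonal Gramian symbols, not just the diagonal one.

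What the displayed identity \emph{does} give you is the LIC: since $\Gamma$ is co-compact, $\Gamma^{\perp}$ is discrete, so for $f\in\mathfrak D$ only finitely many $\alpha\in\Gamma^{\perp}$ meet $\operatorname{supp}\widehat f-\operatorname{supp}\widehat f$, and the boundedness of $\sum_{j}\int_{\Lambda}|\widehat{g}_{j,\chi}(\xi)|^{2}\,d\mu_{\Lambda}(\chi)$ then makes the LIC sum (\ref{eq 3.211}) finite. By Remark~\ref{re 3.3} this yields the $\alpha$-LIC, and you may then invoke Corollary~\ref{coro 6.11} (equivalently, the Parseval corollary following Theorem~\ref{thm 4.51}), which requires only the $\alpha$-LIC and \emph{not} a Bessel hypothesis, to conclude. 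The paper itself glosses over this step, but the correct route to close the converse is through the $\alpha$-LIC and Corollary~\ref{coro 6.11}, not through a Bessel bound and Proposition~\ref{prop 6.1}.
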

   From Proposition~\ref{prop 6.1}, we can derive various results on Gabor systems by using different situations on $\Gamma$, $\Lambda$ and $G$, etc. In the following, by letting $\Gamma$ as a uniform lattice, we deduce a characterization of all the functions $\Psi$ such that the Gabor system generated by $\Psi$ forms a Parseval frame for $L^2(G)$. It turns out that this result generalizes similar works in $L^2(\mathbb{R}^d)$ (e.g., see \cite{HLW}) and $l^2(\mathbb{Z}^d)$ (e.g., see \cite{LH}).
   \begin{example}
   	\label{eg 6.61}	
   	In Corollary~\ref{coro 6.8}, let $\Gamma  \subset G$ be a uniform lattice and let $\Lambda$
   	be a discrete subset of $\widehat{G}$. Further, we assume $N=1$ and let $\Psi^{(n)}= \Psi$ for each $1\leq n \leq N$. Then, it is clear that we can write the system $\mathcal{G}(\Psi, \Gamma, \Lambda)$ in the form of a GSI system 
   	$\bigcup _{j \in J}\{T_{\gamma}g_{j,p}\}_{\gamma \in \Gamma_j,\, p \in P_j}$ 
   	with $\Gamma_j=\Gamma$ for $j \in J \subset \mathbb{Z}$ and
   	$g_{j,p}=g_{j,\chi}=M_{\chi}\psi_j$, where $(j,p)=(j,\chi) \in J \times \Lambda$. In this case, for each $j \in J$, the measure space $P_j=\{\chi: \chi \in \Lambda\}$ is equipped with the measure $\mu_{P_j}:={(\Delta(\alpha))}^{-1}\mu_{\Lambda}$ that satisfies the standing hypothesis. Now, from Corollary~\ref{coro 6.8}, we can deduce a characterization of all functions $\Psi$ such that $\mathcal{G}(\Psi, \Gamma, \Lambda)$ is a Parseval frame for $L^2(G)$. More precisely, $\mathcal{G}(\Psi, \Gamma, \Lambda)$ is a Parseval frame for 
   	$L^2(G)$ if, and only if, for each $\widetilde{\alpha} \in {\Gamma}^{\perp}$ and for a.e. $\xi \in \widehat{G}$, we have
   	\begin{equation*}
   	\sum \limits_{j \in J}\sum \limits_{\chi \in \Lambda} \overline{\widehat {\psi}_j(\xi-\chi)} \widehat {\psi}_j((\xi+ \widetilde{\alpha})-\chi)=\delta_{\widetilde{\alpha},0}. 
   	\end{equation*}
   \end{example}    
  \subsubsection{Wavelet Systems}\label{sec 6.22}
  
  \noindent 
  By letting $\Lambda=\{\chi_0\} \subset \widehat{G}$ in (\ref{eq 6.2}), where $\chi_0$ being the neutral element of $\widehat{G}$, we define the 
  collection
  $\mathcal{U}(\Psi,\mathcal{A},\Gamma )$ as the {\textit{wavelet system}} generated by $\Psi$:
  \begin{equation}\label{eq 6.22}
  \mathcal{U}(\Psi,\mathcal{A},\Gamma ):=\{D_{\alpha}T_{\gamma}\psi_j: \alpha \in \mathcal{A},\gamma \in \Gamma, j \in J\},
  \end{equation}
  as a special case of wave-packet system defined in (\ref{eq 6.2}). For a countable subset $\mathcal{A}$ in {{\bf{Epick}}($G$)}, the system (\ref{eq 6.22}) is a GTI system of the form 
  $\bigcup_{\alpha \in \mathcal{A}}\{T_{\gamma}g_{\alpha,p}\}_{\gamma \in \Gamma_{\alpha},\, p \in P_{\alpha}}$ for
  $\Gamma_{\alpha}={\alpha}^{-1}\Gamma$ with $\alpha \in \mathcal{A}$,
  $g_{\alpha, p}=g_{\alpha, j}=D_{\alpha}\psi_j$ for  $(\alpha,p)=(\alpha,j)$ in   $\mathcal{A}\times J$. In this case, for each $\alpha \in \mathcal{A}$, the measure space $P_{\alpha}:=\{j: j \in J\}$ is equipped with a counting measure $\mu_{P_{\alpha}}:= {(\Delta(\alpha))}^{-1}(\mu_{J})$ which is clearly $\sigma$-finite. Thus, Theorem~\ref{thm 6.11} and Corollary~{\ref{coro 6.11}}  for the case of wave-packet systems now reduce to the following results on wavelet systems. We mention that this result generalizes the duality results for wavelet systems investigated by various authors, including  \cite{B}, to the set-up of super-spaces over LCA groups: 
   \begin{proposition}
   	\label{coro 6.12}
   	For each $1 \leq n \leq N$, let the wavelet systems $\mathcal{U}(\Psi^{(n)}, \mathcal{A}, \Gamma)$ and  $\mathcal{U}(\Phi^{(n)}, \mathcal{A}, \Gamma)$ be Bessel families in $L^2(G)$ which satisfy the corresponding dual $\alpha$-LIC, where $\mathcal{A}$ is a countable subset of {{\bf{Aut}}$($$G$$)$}. Then, the wavelet systems generated by  $\bigoplus \limits_{n=1}^{N}\Psi^{(n)}$ and  $\bigoplus \limits_{n=1}^{N}\Phi^{(n)}$   $($we call as {\textit{super-wavelet systems}}$) $  form  dual  frames for $L^2(G)^{(N)}$ if, and only if, for a.e. $\xi \in \widehat{G}$, both of the following hold:
   	\begin{itemize}	
   		\item[(i)] 
   		for each $1 \leq n \leq N$ and  $\widetilde{\alpha} \in \bigcup \limits_{\alpha \in \mathcal{A}} \Gamma^{\perp}_{\alpha}$, we have
   		\begin{equation}\label{eq 6.2222} 
   			\sum\limits_{\alpha \in \mathcal{A}:\, \widetilde{\alpha} \in \Gamma^{\perp}_{\alpha}}\sum \limits_{j \in J} \overline{\widehat {\varphi}^{(n)}_j({\beta}^{-1}\xi)} \widehat {\psi}^{(n)}_j({\beta}^{-1}(\xi+ \widetilde{\alpha}))=\delta_{\widetilde{\alpha},0}, 
   		\end{equation}
   		\item[(ii)] for each $1 \leq n_1\neq n_2 \leq N$ and  $\widetilde{\alpha} \in \bigcup \limits_{\alpha \in \mathcal{A}} \Gamma^{\perp}_{\alpha}$, we have 
   		\begin{equation}\label{eq 6.42222} 
   			\sum\limits_{\alpha \in \mathcal{A}:\, \widetilde{\alpha} \in \Gamma^{\perp}_{\alpha}}\sum \limits_{j \in J} \overline{\widehat {\varphi}^{(n_1)}_j({\beta}^{-1}\xi)} \widehat {\psi}^{(n_2)}_j({\beta}^{-1}(\xi+ \widetilde{\alpha}))=0,
   		\end{equation}
   	\end{itemize}
   	 where for $\beta={\alpha}^
   	 {\ast}$, $\Gamma^{\perp}_{\alpha}$ is given by $\beta\Gamma^{\perp}$.
   \end{proposition}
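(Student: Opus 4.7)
The plan is to derive Proposition~\ref{coro 6.12} as a direct specialization of Theorem~\ref{thm 6.11}, the wave-packet duality characterization, by choosing the modulation group to be trivial. Concretely, I would set $\Lambda=\{\chi_0\}\subset\widehat{G}$ in the definition of the wave-packet system. Because $\chi_0(x)=1$ for every $x\in G$, the modulation operator $M_{\chi_0}$ equals the identity on $L^2(G)$, and therefore $\mathcal{W}(\Psi,\mathcal{A},\Gamma,\{\chi_0\})=\mathcal{U}(\Psi,\mathcal{A},\Gamma)$. Under this identification the wavelet system inherits the GTI representation used before, namely $\bigcup_{\alpha\in\mathcal{A}}\{T_\gamma g_{\alpha,p}\}_{\gamma\in\Gamma_\alpha,\,p\in P_\alpha}$ with $g_{\alpha,p}=D_\alpha\psi_j$ for $p=(j,\chi_0)\in J\times\{\chi_0\}$, $\Gamma_\alpha=\alpha^{-1}\Gamma$, and $P_\alpha$ equipped with $(\Delta(\alpha))^{-1}(\mu_J\otimes\mu_{\{\chi_0\}})$, which is clearly $\sigma$-finite so the standing hypotheses still hold.

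Next I would verify that the hypotheses of Theorem~\ref{thm 6.11} transfer under this identification. The Bessel condition on $\mathcal{U}(\Psi^{(n)},\mathcal{A},\Gamma)$ and $\mathcal{U}(\Phi^{(n)},\mathcal{A},\Gamma)$ is exactly the Bessel condition on the corresponding wave-packet systems with $\Lambda=\{\chi_0\}$, and the dual $\alpha$-LIC is assumed on the wavelet systems and, being formulated intrinsically in terms of the GTI data $(g_{\alpha,p},h_{\alpha,p},\Gamma_\alpha,P_\alpha)$, passes unchanged to the wave-packet presentation. Since $\mathcal{A}\subset\mathbf{Aut}(G)$, the relation $\Gamma_\alpha^{\perp}=\beta\Gamma^{\perp}$ with $\beta=\alpha^{\ast}$ established in the wave-packet analysis also applies here verbatim.

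Then I would specialize conditions \eqref{eq 6.4111} and \eqref{eq 6.41111}. With $\Lambda=\{\chi_0\}$ equipped with counting measure, every integral $\int_\Lambda F(\chi)\,d\mu_\Lambda(\chi)$ collapses to $F(\chi_0)=F(0)$. The characterization identities therefore become
\begin{equation*}
\sum_{\alpha\in\mathcal{A}:\,\widetilde\alpha\in\Gamma_\alpha^{\perp}}\sum_{j\in J}\overline{\widehat\varphi^{(n)}_j(\beta^{-1}\xi)}\,\widehat\psi^{(n)}_j(\beta^{-1}(\xi+\widetilde\alpha))=\delta_{\widetilde\alpha,0}
\end{equation*}
for the diagonal conditions, and the analogous cross-term identity with $n$ replaced by the pair $(n_1,n_2)$ equalling zero; these are precisely \eqref{eq 6.2222} and \eqref{eq 6.42222}. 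The equivalence asserted by Theorem~\ref{thm 6.11} then furnishes both directions of the claim.

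There is no serious obstacle, since the argument is essentially a substitution; the only points requiring care are the bookkeeping check that $(\Delta(\alpha))^{-1}(\mu_J\otimes\mu_{\{\chi_0\}})$ yields $\sigma$-finite, $\sigma$-compatible measure data for the wave-packet framework, and the verification that the dual $\alpha$-LIC on the wavelet system is literally the dual $\alpha$-LIC on the associated wave-packet system. Both reduce to inspection of the respective definitions, so the proof amounts to stating the identification and invoking Theorem~\ref{thm 6.11}.
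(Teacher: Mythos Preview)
Your proposal is correct and matches the paper's own approach: the paper also obtains Proposition~\ref{coro 6.12} by specializing Theorem~\ref{thm 6.11} to the case $\Lambda=\{\chi_0\}$, noting that the wave-packet system then coincides with the wavelet system and that the $\Lambda$-integral collapses to a single term. The bookkeeping checks you flag (measure data and the dual $\alpha$-LIC passing unchanged) are exactly the minor points the paper handles in the preamble to the proposition.
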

   The following result generalizes \cite[Theorem 1.7]{W}, and can be easily derived from Proposition~\ref{coro 6.12}:
   \begin{corollary} \label{coro 6.13} 
   	For each $1 \leq n \leq N$, let  $\mathcal{U}(\Psi^{(n)}, \mathcal{A}, \Gamma)$ be a wavelet system for $L^2(G)$  which satisfies the corresponding $\alpha$-LIC, where $\mathcal{A}$ is a countable subset of {{\bf{Aut}}$($$G$$)$}. Then, the super-wavelet system generated by  $\bigoplus \limits_{n=1}^{N}\Psi^{(n)}$  forms a Parseval frame for $L^2(G)^{(N)}$ if, and only if, both $($$\ref{eq 6.2222}$$)$ and $($$\ref{eq 6.42222}$$)$ hold for ${\Psi}^{(n)}={\Phi}^{(n)}; 1\leq n \leq N.$
   \end{corollary}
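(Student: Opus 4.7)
The plan is to derive Corollary~\ref{coro 6.13} as a direct specialization of Proposition~\ref{coro 6.12} by setting $\Phi^{(n)}=\Psi^{(n)}$ for every $1\le n\le N$. The conceptual input is that a family $\mathbb{F}$ in a Hilbert space is a Parseval frame if and only if it is a dual frame to itself: setting $\mathbb{F}=\mathbb{G}$ in the reproducing identity $\Theta_{\mathbb{G}}\Theta_{\mathbb{F}}^{\ast}=I$ recovers exactly the Parseval condition. Consequently, the super-wavelet system generated by $\bigoplus_{n=1}^{N}\Psi^{(n)}$ is a Parseval frame for $L^{2}(G)^{(N)}$ if and only if it forms a super-dual frame pair with itself.

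First I would verify that the hypotheses transfer correctly. Taking $\Phi^{(n)}=\Psi^{(n)}$ collapses the dual $\alpha$-LIC appearing in Proposition~\ref{coro 6.12} to the $\alpha$-LIC on each system $\mathcal{U}(\Psi^{(n)},\mathcal{A},\Gamma)$, which is precisely the standing hypothesis of Corollary~\ref{coro 6.13}. Applying Proposition~\ref{coro 6.12} in this symmetric situation then turns the two identities \eqref{eq 6.2222} and \eqref{eq 6.42222} into exactly the two conditions asserted in the corollary.

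The only delicate point is the Bessel assumption that Proposition~\ref{coro 6.12} (and its parents Theorem~\ref{thm 6.11} and Theorem~\ref{thm 4.51}) carries along, since Corollary~\ref{coro 6.13} does not assume it. In the forward direction this is harmless: a Parseval frame is automatically a Bessel family with bound $1$, so one may invoke Proposition~\ref{coro 6.12} verbatim to extract \eqref{eq 6.2222}--\eqref{eq 6.42222}. For the reverse direction I would use the Parseval-frame analogue of Theorem~\ref{thm 4.51} stated in the corollary immediately following it in the excerpt, which removes the Bessel hypothesis when the two generating families coincide. Tracing the derivation Corollary~\ref{prop 4.3} $\Leftarrow$ Theorem~\ref{thm 4.51} $\Leftarrow$ Theorem~\ref{thm 3.1}, the relations \eqref{eq 6.2222}--\eqref{eq 6.42222} force, via the computation of Subsection~\ref{sec 6.1} carried out with $\Phi^{(n)}=\Psi^{(n)}$ (in particular the case analysis \eqref{eq 6.41} that uses $\alpha\in\mathbf{Aut}(G)$ to ensure $(\ker\alpha)^{\perp}=\widehat{G}$), both the self-dual property of each diagonal pair and the GTI-orthogonality of every off-diagonal pair of wavelet systems, from which the Parseval property for $L^{2}(G)^{(N)}$ follows.

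The main obstacle is therefore purely bookkeeping: ensuring that the chain of reductions from the super-wavelet setting down to Theorem~\ref{thm 4.51} is valid without the Bessel hypothesis when $\Phi^{(n)}=\Psi^{(n)}$. No new estimate is required beyond re-examining the specialization of the wave-packet computation in Subsection~\ref{sec 6.1} to the case $\Lambda=\{\chi_{0}\}$ and $\Phi^{(n)}=\Psi^{(n)}$, so the proof should be short once this correspondence is spelled out.
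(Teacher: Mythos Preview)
Your proposal is correct and follows the paper's own route: the paper simply states that Corollary~\ref{coro 6.13} ``can be easily derived from Proposition~\ref{coro 6.12}'', i.e., by setting $\Phi^{(n)}=\Psi^{(n)}$, and the removal of the Bessel hypothesis in the Parseval case is handled exactly as you indicate, via the Parseval-frame corollary to Theorem~\ref{thm 4.51}. Your discussion of the Bessel bookkeeping is more explicit than the paper's, but the argument is the same.
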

   	\begin{example}\label{eg 6.6}
   	By assuming $\Lambda=\{\chi_0\} \subset \widehat{G}$ in Example~\ref{eg 6.3}, where $\chi_0$ being the neutral element of $\widehat{G}$,  for each $1 \leq n\leq N$, we obtain a wavelet system generated by $\Psi^{(n)}$:
   		\begin{align*}\
   		\mathcal{U}(\Psi^{(n)},\mathcal{A},{\mathbb{Z}}^d )&=\big\{D_{A^k}T_{\gamma}{\psi}^{(n)}_l(\cdot{}): l=1,\ldots,L,\, k \in \mathbb{Z}, \gamma \in \mathbb{Z}^d \big\}\\
   		&=\big\{|\det A|^{-k/2}{\psi}^{(n)}_l(A^{k}\cdot{}-\gamma): l=1,\ldots,L,\, k \in \mathbb{Z}, \gamma \in \mathbb{Z}^d \big\},
   		\end{align*}
   	which is a special case of wave-packet system  $\mathcal{W}(\Psi^{(n)}, \mathcal{A}, {\mathbb{Z}}^d, {\mathbb{R}}^d)$.	It follows that two Bessel families  $\mathcal{U}(\Psi^{(n)},\mathcal{A},{\mathbb{Z}}^d )$ and $\mathcal{U}(\Phi^{(n)},\mathcal{A},{\mathbb{Z}}^d )$ are 
   	\begin{itemize}
   	\item[(a)] dual frames if, and only if, for each $1 \leq n \leq N$, 
   		 \begin{equation*}
  \sum\limits_{k\in \mathbb{Z}:\, \widetilde{\alpha} \in {B}^{k}\mathbb{Z}^d}\sum \limits_{l=1}^{L} \overline{\widehat {\varphi}^{(n)}_l({B}^{-k}\xi)} \widehat {\psi}^{(n)}_l({B}^{-k}(\xi+ \widetilde{\alpha}))=\delta_{\widetilde{\alpha},0},\,\,~\mbox{for a.e.}~\,\, \xi \in \mathbb{R}^d, 
   			\end{equation*}
   		\item[(b)] orthogonal frames if, and only if, for each $1 \leq n_1\neq n_2 \leq N$,  
   		\begin{equation*}
   		\sum\limits_{k\in \mathbb{Z}:\, \widetilde{\alpha} \in {B}^{k}\mathbb{Z}^d}\sum \limits_{l=1}^{L} \overline{\widehat {\varphi}^{(n_1)}_l({B}^{-k}\xi)} \widehat {\psi}^{(n_2)}_l({B}^{-k}(\xi+ \widetilde{\alpha}))=0,\,\,~\mbox{for a.e.}~\,\, \xi \in \mathbb{R}^d, 
   		\end{equation*}
   		\end{itemize}
   for all  $\widetilde{\alpha} \in \bigcup \limits_{k \in \mathbb{Z}} {B}^{k}\mathbb{Z}^d$. Clearly, the wavelet systems generated by  $\bigoplus \limits_{n=1}^{N}\Psi^{(n)}$ and  $\bigoplus \limits_{n=1}^{N}\Phi^{(n)}$     form  dual  frames for $L^2(\mathbb{R}^d)^{(N)}$ if, and only if, both of the equalities (a) and (b) are satisfied. Here, note that the results obtained in Example~\ref{eg 6.6}(a)
   and Example~\ref{eg 6.6}(b) coincide with the characterizations of two wavelet systems to be dual frames (e.g., see \cite{B}) and orthogonal 	frames \cite{W}, respectively.
   	\end{example}
  \begin{remark}\label{re 6.10} Similar to the case of Gabor systems considered in Example~\ref{eg 6.61}, we can study duals of wavelet systems with translations along uniform lattices as a special case of Proposition~\ref{coro 6.12}. For defining such systems, we can use an approach similar to  Dhalke \cite{D}, and Kutyniok and Labate \cite{KL}, where the dilations have been treated as expensive automorphisms on an LCA group $G$.
  \end{remark}


\end{document}